\numberwithin{equation}{section}
\newtheorem{thm}{Theorem}[section]
\newtheorem{coro}{Corollary}[section]
\newtheorem{rmk}{Remark}[section]
\newtheorem{lem}{Lemma}[section]
\newtheorem{ex}{Example}[section]
\begin{document}

\begin{frontmatter}

\title{Efficient energy-preserving exponential integrators for multi-component Hamiltonian systems}

\author{Xuelong Gu$^{a}$, Chaolong Jiang$^{b}$, Yushun Wang$^{a}$, Wenjun Cai$^{a,*}$}
\address[1]{Jiangsu Key Laboratory for NSLSCS, School of Mathematical Sciences, Nanjing Normal University}
\address[2]{School of Statistics and Mathematics, Yunnan University of Finance and Economics}

\begin{abstract}

In this paper, we develop a framework to construct energy-preserving methods for multi-component Hamiltonian systems, combining the exponential integrator and the partitioned averaged vector field method. This leads to numerical schemes with advantages of original energy conservation, long-time stability, and excellent behavior for highly oscillatory or stiff problems. Compared to the existing energy-preserving exponential integrators (EP-EI) in practical implementation, our proposed methods are much efficient which can at least be computed by subsystem instead of handling a nonlinear coupling system at a time. Moreover, for most cases, such as the Klein-Gordon-Schr\"{o}dinger equations and the Klein-Gordon-Zakharov equations considered in this paper, the computational cost can be further reduced. Specifically, one part of the derived schemes is totally explicit, and the other is linearly implicit. In addition, we present rigorous proof of conserving the original energy of Hamiltonian systems, in which an alternative technique is utilized so that no additional assumptions are required, in contrast to the proof strategies used for the existing EP-EI. Numerical experiments are provided to demonstrate the significant advantages in accuracy, computational efficiency, and the ability to capture highly oscillatory solutions.
	
\end{abstract}

\begin{keyword}
	Hamiltonian system;
	Energy-preserving method;
	Exponential integrator;
	Linearly implicit scheme;
	Highly oscillatory solution
\end{keyword}

\end{frontmatter}

\begin{figure}[b]
	\small \baselineskip=10pt
	\rule[2mm]{1.8cm}{0.2mm} \par
	$^{*}$Corresponding author.\\
	E-mail address: caiwenjun@njnu.edu.cn (W. Cai).
\end{figure}

\section{Introduction}
In this paper, we consider multi-component Hamiltonian systems of forms
\begin{equation}\label{eq1-1}
	\dot{y}_i = S_{i}\big(L_{i}y_i + \nabla_{y_i} U(y) \big),\quad  i=1,2\cdots,m,
\end{equation} 
where $y_i \in \mathbb{R}^{d}$, $y=(y_1,y_2,\cdots,y_m)^\top$, $U\left(y\right)$ is the potential energy. Here $S_{i}$ and $L_{i}$, $i=1,2$ are skew-symmetric and symmetric matrices, respectively. There are lots of physical models admitting the forms of \eqref{eq1-1}, such as the Kepler and H\'{e}non-Heiles systems in celestial mechanics \cite{hairer2006}, and semi-discretization of coupled PDEs like the Zakharov \cite{zakharov1} and Klein-Gordon-Zakharov \cite{bellan-06,dendy-90} equations in plasma physics, the Klein-Gordon-Schr\"{o}dinger \cite{bellan-06} and Klein-Gordon-Dirac \cite{greiner-94,Holten-91} equations in quantum mechanics and so on. The above system can be recast into a compact Hamiltonian system as
\begin{equation}\label{eq1-2}
	\dot{y}=S\nabla H(y),
\end{equation}
where $S$ is a block diagonal matrix given by $S={\rm diag}(S_1,S_2\cdots,S_m)$, and the conserved Hamiltonian energy reads
\begin{equation}
	H(y) = \dfrac{1}{2}\sum_{i=1}^{m}y_i^\top L_iy_i + U(y).
\end{equation}   

Numerical methods, especially the energy-preserving methods, for these conservative systems have attracted a significant amount of attention in recent years. According to the equivalent Hamiltonian form of \eqref{eq1-2}, traditional energy-preserving methods can be utilized straightforwardly, such as the discrete gradient (dG) methods \cite{1996Time,1988Hamiltonian}, the averaged vector field (AVF) methods \cite{quispel2008new,JCM-34-479}, and the Hamiltonian boundary value methods (HBVMs) \cite{2019-Line}. However, the resulting schemes are inevitably fully implicit and complex due to the nonlinearity and the multi-component factor. As a consequence, very few works were devoted to applying these traditional methods to the systems \eqref{eq1-1} \cite{zhang2016,fu2020structure}. Recently, linearly implicit schemes have become preferable, attributing to their computational efficiency \cite{matsuo-01-EP-linearly-implicit,dahlby-11-general-SP,PolarDG-21}. One systematical methodology to construct linearly implicit schemes for general systems is the energy quadratization technique, which includes the invariant energy quadratization (IEQ) approach \cite{yang2017efficient,yang2017numerical,gong2020} and the scalar auxiliary variable (SAV) approach \cite{shen2018scalar,shen2018convergence}. Although the energy quadratization technique is originally proposed for dissipative gradient flow models, it has also been applied to Hamiltonian systems \cite{cai2019b,cai2020a,jiang2020a,LiDongFang2020Linearly}, but no multi-component systems have been concerned so far. One main reason that restricts the further applications may be attributed to the preservation of so-called modified energy, and continuous efforts are still made to improve this shortage \cite{LagrangeMultiplierShen}. 

The above-mentioned energy-preserving methods usually utilized one kind of time integrations for every component in the Hamiltonian system \eqref{eq1-1}. Otherwise, some more efficient energy-preserving schemes can be obtained.  Taking the Klein-Gordon-Schr\"{o}dinger (KGS) equations for example, the authors in \cite{wang2018unconditional} proposed two linearly implicit schemes by utilizing different time integrations for components corresponding to the Klein-Gordon and Schr\"odinger parts, respectively. Similar ideas can be found from the (partly) linearly implicit schemes for the Klein-Gordon-Zakharov (KGZ) equations \cite{KGZ1} and the Zakharov system \cite{FiniteDifferenceZS}. Nevertheless, the construction of these energy-preserving schemes is quite technical and can not be extended to general multi-component Hamiltonian systems \eqref{eq1-1}. Recently, Cai et al. proposed a partitioned AVF (PAVF) method \cite{cai2018partitioned} which can be viewed as a combination of the AVF method and the coordinate increment dG. Due to the flexibility of the partition strategy, the resulting schemes of the PAVF method are usually simpler (e.g., partly linearly implicit) and thus more efficient than the standard AVF method while keeping the energy still being preserved. More importantly, the PAVF method is targeted explicitly for general Hamiltonian systems \eqref{eq1-1} with multi-component.  

When the Hamiltonian systems \eqref{eq1-1} have highly oscillatory solutions or are derived from semi-discrete time-dependent PDEs that often belong to very stiff problems, exponential integrators are much preferable because they permit larger time step sizes and achieve higher accuracy than nonexponential ones. As far as we know, most exponential integrators focus on the improvement of numerical stability and accuracy (see, e.g., \cite{Cox-02,du-stability,exponential-intergrators}), and very few works consider the structure-preserving properties, especially the conservation of energy. In \cite{li2016exponential}, Li and Wu proposed a general framework of constructing second-order EP-EI by combining exponential integrators and the AVF method (denoted by EAVF). This method is then extended to arbitrarily high order according to the modifying integrator theory \cite{EAVFHighOrder}. Based on the dG method, Shen and Leok also proposed EP-EI as one kind of geometric exponential integrators studied in \cite{shen2019a}. For particular quadratic invariants, combining the Runge-Kutta method, Bhatt and Moore \cite{eerksp} developed higher-order exponential integrators for linearly damped systems. However, all these EP-EI are fully implicit for general nonlinear problems, and iterations are required for every time step, making them time-consuming. To the best of our knowledge, the only linearly implicit EP-EI for Hamiltonian systems is constructed with the help of the energy quadratization approach and thus only preserves a modified energy \cite{KGESemiExpInte}.

In this paper, we propose an exponential partitioned AVF method (EPAVF) for Hamiltonian systems \eqref{eq1-1} with multi-component, which is developed by combining PAVF and EAVF and thus possesses both advantages in the computational efficiency and the ability to deal with highly oscillatory solutions or very stiff problems. More specifically, the resulting schemes of EPAVF can be solved by subsystem, and for concrete problems, it may exhibit better performance for fast calculation. Taking the KGS equations and the KGZ equations with a parameter $0<\varepsilon\ll 1$ for illustration, we present a detailed process of constructing the EPAVF schemes under periodic and homogeneous Dirichlet boundary conditions, respectively. The derived schemes are highly efficient, in which one part can be solved explicitly while the other is linearly implicit. Moreover, all schemes can well capture the highly oscillatory solutions of these two equations, and the meshing strategy requirement (or $\varepsilon$-scalability) \cite{bao2013a,bao2016,bao2017} is $\tau=\mathcal{O}(\varepsilon^2)$ with $\tau$ the time step size. Besides the computational efficiency, we also provide a new approach to prove the energy conservation of EPAVF. In contrast to the proof techniques used in \cite{li2016exponential} and \cite{shen2019a} where extra assumptions on $S_i$ and $L_i$ are required, our approach only utilize the symmetry properties of $S_i$ and $L_i$ which is essential for problems \eqref{eq1-1}. Although the proof is carried out for the multi-component Hamiltonian systems \eqref{eq1-1}, it can also be employed to re-analyze EAVF in \cite{li2016exponential} and the exponential integrators in \cite{shen2019a}.

The rest of this paper is organized as follows. In section~\ref{EPAVF formulation}, we present the EPAVF method for general multi-component Hamiltonian systems. In conjunction with its adjoint method, a second-order EPAVF method is also developed through the composition technique. In section \ref{Properties}, a rigorous proof of energy conservation of the EPAVF method is provided. Such methods are then applied to the KGS and KGZ equations under periodic and homogeneous boundary conditions, respectively in section \ref{Numerical Experiments}, where concrete schemes are constructed, and ample numerical experiments are carried out to demonstrate their superior behaviors. The last section is concerned with the conclusion.

\section{Construction of the EPAVF methods}\label{EPAVF formulation}

In this section, we present the general form of the EPAVF method for multi-component Hamiltonian systems \eqref{eq1-1} on a finite time interval $t\in(0, T]$. Let $N_{t}$ be a given positive integer and $\tau = \frac{T}{N_{t}}$ be the time step size. Then the time domain is partitioned uniformly as $\bigcup_{n = 1}^{N_{t}} I_{n}$ where $I_{n} = (t_{n-1}, t_{n}]$, $t_n=n\tau$. Applying the well-known variation-of-constants formula to each equation of \eqref{eq1-1} gives
\begin{equation}\label{variation of constant}
	y_{i}(t_{n+1}) = \exp{(V_{i})}y_{i}(t_{n}) + \tau \int_{0}^{1} \exp\big({\left(1 - \xi\right)V_{i}}\big)S_{i}\nabla_{y_{i}} U\big(y(t_{n} + \xi\tau)\big)d\xi, \ i = 1,2,\cdots,m,
\end{equation}   
where $V_{i} = \tau S_{i}L_{i}$. To complete the construction of a numerical method, the integrals in \eqref{variation of constant} have to be approximated. Various strategies have been developed such as the exponential time differencing (ETD) methods \cite{Cox-02} that have been widely applied to phase field models \cite{etdParabolic,etdNonlocalAc} and may be ingeniously designed to possess the energy stability. However, in order to achieve the energy conservation for Hamiltonian systems, the approximations should be done more delicately. An effective way is to approximate  $\nabla U\left(y\right)$ by a suitable dG (denoted by $\overline{\nabla} U\left(\widehat{y}, y\right)$), which satisfies the following two conditions 
\begin{equation*}
	\overline{\nabla}U(\widehat{y},y) = \nabla U(y) \quad\text{and} \quad \overline{\nabla}U(\widehat{y},y)^{\top}\left(\widehat{y} - y\right) = U\left(\widehat{y}\right) - U\left(y\right),
\end{equation*}
for all $\widehat{y}, y$. For the construction of EPAVF, we choose the dG as a combination of the coordinate increment dG and the AVF method, which reads
\begin{equation}\label{dg epavf}
	\overline{\nabla}U\left(\widehat{y}, y\right) = 
	\begin{pmatrix}
		\int_{0}^{1} \nabla_{y_{1}}U \left(\xi \widehat{y}_{1} + \left(1 - \xi\right)y_{1}, y_{2}, \cdots, y_{m-1}, y_{m}\right) d\xi \\ 
		\int_{0}^{1} \nabla_{y_{2}}U \left(\widehat{y}_{1}, \xi \widehat{y}_{2} + \left(1 - \xi\right) y_{2}, \cdots, y_{m-1}, y_{m}\right) d\xi \\ 
		\vdots \\ 
		\int_{0}^{1} \nabla_{y_{m-1}} U \left(\widehat{y}_{1}, \xi  \widehat{y}_{2}, \cdots, \xi \widehat{y}_{m-1} + \left( 1 - \xi \right)y_{m-1} , y_{m}\right) d\xi  \\ 
		\int_{0}^{1} \nabla_{y_{m}}U \left(\widehat{y}_{1},  \widehat{y}_{2}, \cdots, \widehat{y}_{m-1}, \xi \widehat{y}_{m} + \left(1- \xi\right) y_{m} \right) d\xi 
	\end{pmatrix}.
\end{equation}
Then, the EPAVF method is derived as
\begin{equation} \label{epavf}
	y_{i}^{n+1} = \exp\left(V_{i}\right)y_{i}^{n} + \tau\varphi\left(V_{i}\right)S_{i}\overline{\nabla}_{y_i}U\left(y^{n+1}, y^n\right),\quad i=1,2,\cdots,m,
\end{equation}
where $\overline{\nabla}_{y_i}U\left(y^{n+1}, y^n\right)$ represents  the $i$th component of $\overline{\nabla}U\left(y^{n+1}, y^n\right)$ and $\varphi(V_i)$ is defined by
\begin{equation}\label{phiVd}
	\varphi\left(V_i\right) = \int_{0}^{1} \exp\left({\left(1 - \xi\right)V_{i}}\right)d\xi.
\end{equation} 
Since the components are separately integrated with the definition of dG \eqref{dg epavf}, the resulting schemes can be naturally solved by subsystem, which makes EPAVF much more efficient than EAVF. Moreover, as we will demonstrate below, in many cases, EPAVF can easily be linearly implicit or even partly explicit so that the computational cost can be reduced significantly.

\begin{rmk}\label{rmk1}
	It should be noticed that the definition of the EPAVF method is not unique, any reordering of $y_i$ will result to a different method. That is, denoting by $\tilde{i}$ the index after a reordering of $i$, the EPAVF method now becomes
	\begin{equation*}
			y_{\tilde{{i}}}^{n+1} = \exp\left(V_{\tilde{i}}\right)y_{\tilde{i}}^{n} + \tau\varphi\left(V_{\tilde{i}}\right)S_{\tilde{i}}\overline{\nabla}_{y_{\tilde{i}}}U\left(y^{n+1}, y^n\right),\quad i =1,2,\cdots,m.
	\end{equation*}
	As a consequence, such a method provides a lot of flexibility in practical computations. Nevertheless, one can quickly check that the EPAVF method is only of the first-order accuracy by Taylor expansion, independent of the ordering. How to select the most accurate ordering relies on concrete problems and will be demonstrated in the following examples.
\end{rmk}

In practical computation, second-order schemes are more preferable. Therefore, by the composition technique, we further improve the accuracy of the EPAVF method. Denoting the one-step evolution  of the EPAVF method \eqref{epavf} as $\Phi_{\tau}$. Then its adjoint method $\Phi_{\tau}^*$ can be obtained by replacing $n$ with $n+1$ and $\tau$ with $-\tau$, which yields
\begin{equation} \label{epavf adjoint}
	y_{i}^{n+1} = \exp\left(V_{i}\right)y_{i}^{n} + \tau\varphi\left(V_{i}\right)S_{i}\overline{\nabla}^{*}_{y_{i}}U\left(y^{n+1}, y^n\right), 
\end{equation}
where the dG $\overline{\nabla}^{*}U\left(\widehat{y}, y\right)$ is defined by
\begin{equation}\label{dg epavf adjoint}
	\overline{\nabla}^{*}U\left(\widehat{y}, y\right) = 
	\begin{pmatrix}
		\int_{0}^{1} \nabla_{y_{1}}U \left(\xi \widehat{y}_{1} + \left(1 - \xi\right)y_{1}, \widehat{y}_{2}, \cdots, \widehat{y}_{m-1}, \widehat{y}_{m}\right) d\xi \\ 
		\int_{0}^{1} \nabla_{y_{2}}U \left(y_{1}, \xi \widehat{y}_{2} + \left(1 - \xi\right) y_{2}, \cdots, \widehat{y}_{m-1}, \widehat{y}_{m}\right) d\xi \\ 
		\vdots \\ 
		\int_{0}^{1} \nabla_{y_{m-1}}U \left(y_{1},   y_{2}, \cdots, \xi \widehat{y}_{m-1} + \left( 1 - \xi \right)y_{m-1} , \widehat{y}_{m}\right) d\xi  \\ 
		\int_{0}^{1} \nabla_{y_{m}}U \left(y_{1},  y_{2}, \cdots, y_{m-1}, \xi \widehat{y}_{m} + \left(1- \xi\right) y_{m} \right) d\xi 
	\end{pmatrix}.
\end{equation}
Subsequently, a second-order symmetric method $\Psi_\tau$ is constructed by the composition of $\Phi_\tau$, $\Phi_\tau^*$ as
\begin{equation}\label{composition}
	\Psi_{\tau} = \Phi^{*}_{\tau/2} \circ \Phi_{\tau/2}.
\end{equation}
For the later use, we denote the second-order scheme \eqref{composition} by EPAVF-C. Notice that higher-order EPAVF can also be obtained by an increment of the composition stages, and readers are referred to \cite{hairer2006} for details. In the current paper, we only focus on the first- and the second-order EPAVF methods.

\section{Energy conservation of the EPAVF methods}\label{Properties}

In this section, we will prove the energy conservation of the EPAVF methods. For simplification, the following discussions are carried out with respect to the case when $i=2$ in the multi-component Hamiltonian systems \eqref{eq1-1}. The obtained results can be extended to general cases straightforwardly.

For convenience of the notation, we introduce two new variables $p,q$ and rewrite the targeted system as
\begin{equation}\label{model 2 bolock}
	\begin{cases}
		\dot{p} = S_{1}\left(L_{1}p + \nabla_{p}U\left(p, q\right)\right), \\ 
		\dot{q} = S_{2}\left(L_{2}q + \nabla_{q}U\left(p, q\right)\right).
	\end{cases}
\end{equation}
Here, the conservative Hamiltonian energy becomes 
\begin{equation}\label{energy 2 block}
	H\left(p,q\right) = \dfrac{1}{2} p^{\top}L_{1}p + \dfrac{1}{2} q^{\top}L_{2}q + U\left(p,q\right).
\end{equation}
The corresponding EPAVF method \eqref{epavf} and its adjoint \eqref{epavf adjoint} have the forms
\begin{equation}\label{epavf 2 block}
	\left\lbrace\begin{aligned}
		p^{n+1} &= \exp{\left(V_{1}\right)} p^{n} + \tau\varphi\left(V_{1}\right)S_{1}\overline{\nabla}_pU(p^{n+1}, q^{n+1},p^n,q^n), \\ 
		q^{n+1} &= \exp{\left(V_{2}\right)} q^{n} + \tau\varphi\left(V_{2}\right)S_{2}\overline{\nabla}_qU(p^{n+1}, q^{n+1},p^n,q^n),
	\end{aligned}\right.
\end{equation} 
and
\begin{equation}\label{epavf 2 block adjoint}
	\left\lbrace 
	\begin{aligned}
		p^{n+1} &= \exp{(V_{1})}p^{n} + \tau \varphi(V_{1}) S_{1} \overline{\nabla}^{*}_pU(p^{n+1}, q^{n+1}, p^{n}, q^{n}), \\ 
		q^{n+1} &= \exp{\left(V_{2}\right)} q^{n} + \tau\varphi\left(V_{2}\right)S_{2}\overline{\nabla}^{*}_qU(p^{n+1}, q^{n+1},p^n,q^n),
	\end{aligned}
	\right.
\end{equation}
where 
\begin{equation*}
	\begin{aligned}
	&	\overline{\nabla}U_p(p^{n+1}, q^{n+1},p^n,q^n)=\int_{0}^{1}\nabla_{p}U\left(\xi p^{n+1} + \left(1 - \xi\right) p^{n}, q^{n}\right) d\xi,\\
		&\overline{\nabla}U_q(p^{n+1}, q^{n+1},p^n,q^n)=\int_{0}\nabla_{q}U\left(p^{n+1}, \xi q^{n+1} + \left(1 - \xi\right)q^{n}\right) d\xi, \\ 
		&\overline{\nabla}^{*}U_p(p^{n+1}, q^{n+1},p^n,q^n)=\int_{0}^{1}\nabla_{p}U\left(\xi p^{n+1} + \left(1 - \xi\right) p^{n}, q^{n+1}\right) d\xi,\\
		&\overline{\nabla}^{*}U_q(p^{n+1}, q^{n+1},p^n,q^n)=\int_{0}\nabla_{q}U\left(p^{n}, \xi q^{n+1} + \left(1 - \xi\right)q^{n}\right) d\xi.
	\end{aligned}
\end{equation*}
In the following derivations, they are abbreviated as $\overline{\nabla}_pU$, $\overline{\nabla}_qU$, $\overline{\nabla}^{*}_pU$ and $\overline{\nabla}^{*}_qU$ for short. Moreover, the second order EPAVF--C method \eqref{composition} can be obtained via the composition of  \eqref{epavf 2 block} and \eqref{epavf 2 block adjoint}.

\begin{lem}\label{conservation lemma}
	Assume that $L$ is symmetric, $S$ is skew--symmetric and let $V=\tau SL$. The following properties hold:
	\begin{itemize}
		\item[{\rm (1)}] $L \exp(V) = \exp{(-V)}^{\top} L$;
		\item[{\rm (2)}] $\exp(V)S=S\exp(-V)^\top$;
		\item[{\rm (3)}] $L \varphi(V) = \varphi{(-V)}^{\top} L$;
		\item[{\rm (4)}] $\varphi(V)S=S\varphi(-V)^\top$;
		\item[{\rm (5)}] $V\varphi(V)=\exp(V)-I$;
		\item[{\rm (6)}] $\exp{(-V)}\varphi(V) = \varphi(-V)$.
	\end{itemize}
\end{lem}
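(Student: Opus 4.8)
The plan is to reduce all six identities to two fundamental commutation relations between $V=\tau SL$ and the matrices $L,S$, and then lift these from single powers of $V$ to the entire functions $\exp$ and $\varphi$ through their power series. The starting point is that, since $L^\top=L$ and $S^\top=-S$, one has $V^\top=(\tau SL)^\top=\tau L^\top S^\top=-\tau LS$, so that $(-V)^\top=\tau LS$. A direct computation then yields the two key relations
\begin{equation*}
	LV=\tau LSL=(-V)^\top L,\qquad VS=\tau SLS=S(-V)^\top,
\end{equation*}
each of which uses only the symmetry of $L$ and the skew-symmetry of $S$.

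Next I would promote these to arbitrary powers by induction. Applying $LV=(-V)^\top L$ repeatedly, and using that the transpose of a power equals the power of the transpose, $((-V)^k)^\top=((-V)^\top)^k$, one gets $LV^k=((-V)^k)^\top L$ for all $k\ge 0$; symmetrically $V^kS=S((-V)^k)^\top$. Since the scalar coefficients of a power series pass through both matrix multiplication and transposition unchanged, for any entire function $f(z)=\sum_{k\ge0}a_kz^k$ these sum to
\begin{equation*}
	Lf(V)=f(-V)^\top L,\qquad f(V)S=Sf(-V)^\top .
\end{equation*}
The first identity with $f=\exp$ gives (1) and with $f=\varphi$ gives (3); the second with $f=\exp$ gives (2) and with $f=\varphi$ gives (4). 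Here I would remark that $\varphi$ is entire, with $\varphi(z)=\int_0^1 \exp((1-\xi)z)\,d\xi=\sum_{k\ge0}\frac{z^k}{(k+1)!}$, so the term-by-term manipulation is justified by absolute convergence of the series in any submultiplicative matrix norm.

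The last two identities are purely analytic and require none of the symmetry structure. For (5) I would differentiate under the integral sign: since $\frac{d}{d\xi}\exp((1-\xi)V)=-V\exp((1-\xi)V)$, integration gives $V\varphi(V)=\int_0^1 V\exp((1-\xi)V)\,d\xi=-\big[\exp((1-\xi)V)\big]_0^1=\exp(V)-I$. For (6) I would factor the constant exponential inside the integral, using that $\exp(-V)\exp((1-\xi)V)=\exp(-\xi V)$, and then substitute $\eta=1-\xi$: $\exp(-V)\varphi(V)=\int_0^1\exp(-\xi V)\,d\xi=\int_0^1\exp(-(1-\eta)V)\,d\eta=\varphi(-V)$.

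I do not anticipate a genuine obstacle; the only points demanding care are bookkeeping ones---getting the signs and transposes right in the two base relations (recalling that transposition reverses the order of a product while leaving scalar coefficients fixed) and confirming that the passage from finite powers to the full series is legitimate for the entire functions $\exp$ and $\varphi$. Everything else is routine, and the two commutation relations do all the work.
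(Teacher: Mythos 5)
Your proof is correct and follows essentially the same route as the paper's: both rest on the Taylor expansion of $\exp$ and $\varphi$ together with the reordering $(LS)^kL=L(SL)^k$ (equivalently your single-power relations $LV=(-V)^\top L$ and $VS=S(-V)^\top$ lifted by induction), the only cosmetic difference being that you pass through the series $\varphi(z)=\sum_{k\ge0}z^k/(k+1)!$ where the paper invokes the integral definition \eqref{phiVd}. You also supply explicit verifications of (5) and (6), which the paper declares trivial and omits; these are correct.
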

\begin{proof}
	The commutable properties (1), (2) follow from the direct Taylor expansion and a reordering of matrix multiplications as
	\[
	\begin{aligned}
		\exp(-V)^\top L&=\sum_{k=0}^\infty\frac{(-1)^k\tau^k}{k!}\big[(SL)^k\big]^\top L=\sum_{k=0}^\infty\frac{\tau^k}{k!}(LS)^kL=L\sum_{k=0}^\infty\frac{\tau^k}{k!}(SL)^k=L\exp(V),\\
		S\exp(-V)^\top &=\sum_{k=0}^\infty\frac{(-1)^k\tau^k}{k!}S\big[(SL)^k\big]^\top =\sum_{k=0}^\infty\frac{\tau^k}{k!}S(LS)^k=\sum_{k=0}^\infty\frac{\tau^k}{k!}(SL)^kS=\exp(V)S.		
	\end{aligned}
\]
Subsequently, according to the definition of $\varphi(V)$ \eqref{phiVd}, one can easily obtain that the commutable properties also hold for $\varphi(V)$, i.e., properties (3), (4). The last two identities follow from a direct calculation, and we omit the trivial proof here.

\end{proof}
\begin{thm}
	The EPAVF method \eqref{epavf 2 block} and its adjoint \eqref{epavf 2 block adjoint} preserve a discrete Hamiltonian energy, i.e.,
	\begin{equation}\label{energy}
		H\left(p^{n+1}, q^{n+1}\right) = H\left(p^{n}, q^{n}\right).
	\end{equation} 
\end{thm}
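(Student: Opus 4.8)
The plan is to reduce the conservation identity \eqref{energy} to the discrete-gradient property of the potential together with a few purely algebraic identities supplied by Lemma \ref{conservation lemma}, treating the $p$- and $q$-blocks separately since the coordinate-increment structure of \eqref{dg epavf} decouples the increment of $U$.

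First I would rewrite the energy difference. Writing $\delta_p=p^{n+1}-p^n$ and $\delta_q=q^{n+1}-q^n$ and using the symmetry of $L_1,L_2$, each quadratic term telescopes as $\tfrac12(p^{n+1})^\top L_1 p^{n+1}-\tfrac12(p^n)^\top L_1 p^n=\tfrac12\delta_p^\top L_1(p^{n+1}+p^n)$, and similarly for $q$. For the potential I would invoke the defining identity of the discrete gradient \eqref{dg epavf}, which here reads $\overline{\nabla}_pU^\top\delta_p=U(p^{n+1},q^n)-U(p^n,q^n)$ and $\overline{\nabla}_qU^\top\delta_q=U(p^{n+1},q^{n+1})-U(p^{n+1},q^n)$; adding these recovers $U(p^{n+1},q^{n+1})-U(p^n,q^n)$. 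Collecting terms yields
\[
H(p^{n+1},q^{n+1})-H(p^n,q^n)=\delta_p^\top\Big(\tfrac12 L_1(p^{n+1}+p^n)+\overline{\nabla}_pU\Big)+\delta_q^\top\Big(\tfrac12 L_2(q^{n+1}+q^n)+\overline{\nabla}_qU\Big),
\]
so it suffices to show that each bracketed contribution vanishes.

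For the $p$-block I would substitute the scheme \eqref{epavf 2 block} in the form $p^{n+1}=\exp(V_1)p^n+F\,\overline{\nabla}_pU$ with $F=\tau\varphi(V_1)S_1$ and expand. Three kinds of terms appear---quadratic in $p^n$, bilinear in $(p^n,\overline{\nabla}_pU)$, and quadratic in $\overline{\nabla}_pU$---which I would annihilate with three consequences of Lemma \ref{conservation lemma}: (a) $\exp(V_1)^\top L_1\exp(V_1)=L_1$, from property (1), which makes the coefficient of the $p^n$-quadratic term skew-symmetric; (b) $\tau\exp(V_1)^\top L_1\varphi(V_1)S_1=I-\exp(V_1)^\top$, from properties (1),(4),(6), property (5), and the structural relation $\tau L_1S_1=-V_1^\top$, which cancels the bilinear term against the $\exp(V_1)^\top-I$ part carried by $\delta_p$; and (c) $F^\top L_1F+F+F^\top=0$, from properties (3)--(6) and $\tau L_1S_1=-V_1^\top$, which removes the remaining quadratic term. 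The $q$-block is identical. The adjoint scheme \eqref{epavf 2 block adjoint} needs no extra work: its gradient $\overline{\nabla}^{*}U$ obeys the same telescoping property (the coordinates are merely incremented in the opposite order, through the intermediate state $(p^n,q^{n+1})$), while (a)--(c) involve only $S_i,L_i$ and not the gradient.

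The main obstacle is establishing the $\varphi$-identities (b) and (c). Direct transposition keeps regenerating the mixed product $L_1S_1=-V_1^\top/\tau$, and the cancellation only closes after chaining properties (4)--(6) so that $V_1\varphi(V_1)$, $V_1\varphi(-V_1)$ and $\exp(-V_1)\varphi(V_1)$ are all rewritten through $\exp(\pm V_1)-I$ and $\varphi(\mp V_1)$; for instance (c) collapses to $F^\top L_1F=-(F+F^\top)$ only after this substitution. This is exactly where the advertised alternative technique matters: the argument uses solely the symmetry of $L_i$ and the skew-symmetry of $S_i$ (through $\tau L_iS_i=-V_i^\top$ and Lemma \ref{conservation lemma}), so no invertibility of $L_i$ or $S_i$ and no extra commutativity hypothesis on $S_i,L_i$ is required, in contrast to the proofs in \cite{li2016exponential,shen2019a}.
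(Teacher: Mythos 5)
Your proposal is correct and follows essentially the same route as the paper's proof: both treat the $p$- and $q$-blocks separately, exploit the coordinate-increment telescoping of the discrete gradient to reassemble $U(p^{n+1},q^{n+1})-U(p^n,q^n)$, and cancel the quadratic increments using only the identities of Lemma \ref{conservation lemma} together with the relation $\tau L_iS_i=-V_i^\top$ coming from the symmetry of $L_i$ and skew-symmetry of $S_i$. The paper carries out the cancellation as one sequential chain of substitutions rather than isolating your three matrix identities (a)--(c), but the algebraic content --- in particular the final reduction to the skew-symmetry of $S\varphi(V)^\top+\varphi(V)S$ --- is the same.
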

\begin{proof}
	Since the proofs of energy conservation are quite similar for EPAVF and its adjoint, we only present that for EPAVF \eqref{epavf 2 block}. By direct calculation and recalling $V_i=\tau S_iL_i$, $i=1,2$, we have
\[
	\begin{aligned}
		&\frac{1}{2}(p^{n+1})^\top L_1p^{n+1}-\frac{1}{2}(p^{n})^\top L_1p^{n}\\
		&=\frac{1}{2}\Big(\exp(V_1)p^n+\tau\varphi(V_1)S_1\overline{\nabla}_pU\Big)^\top L_1p^{n+1}-\frac{1}{2}(p^{n})^\top L_1p^{n}\\
		&=\frac{1}{2}(p^n)^\top\exp(V_1)^\top L_1\Big(\exp(V_1)p^n+\tau\varphi(V_1)S\overline{\nabla}_pU\Big)-\frac{1}{2}\tau\overline{\nabla}_pU^\top S\varphi(V_1)^\top L_1p^{n+1}-\frac{1}{2}(p^{n})^\top L_1p^{n}\\
		&=\frac{1}{2}\tau(p^{n})^\top \exp(V_1)^\top L_1\varphi(V_1)S_1\overline{\nabla}_pU+\frac{1}{2}\tau\overline{\nabla}_pU^\top \big(L_1\varphi(V_1) S_1\big)^\top p^{n+1}\quad \mbox{(Property (1))}\\
		&=\frac{1}{2}\tau(p^{n})^\top \exp(V_1)^\top \varphi(-V_1)^\top L_1S_1\overline{\nabla}_pU-\frac{1}{2}\overline{\nabla}_pU^\top V_1\varphi(-V_1)p^{n+1}\quad \mbox{(Property (3))}\\
		&=-\frac{1}{2}(p^{n})^\top \big(V_1\varphi(V_1)\big)^\top\overline{\nabla}_pU-\frac{1}{2}\overline{\nabla}_pU^\top V_1\varphi(-V_1)p^{n+1}\quad \mbox{(Property (6))}\\
		&=-\frac{1}{2}\overline{\nabla}_pU^\top\big(\exp(V_1)-I\big)p^{n}+\frac{1}{2}\overline{\nabla}_pU^\top \big(\exp(-V_1)-I\big) p^{n+1}\quad \mbox{(Property (5))}\\
		&=-\frac{1}{2}\overline{\nabla}_pU^\top\big(\exp(V_1)-2I+I\big)p^{n}+\frac{1}{2}\overline{\nabla}_pU^\top \big(\exp(-V_1)-2I+I\big) p^{n+1}\\
		&=-\overline{\nabla}_pU^\top (p^{n+1}-p^n)-\frac{1}{2}\overline{\nabla}_pU^\top\big(\exp(V_1)+I\big)p^{n}+\frac{1}{2}\overline{\nabla}_pU^\top \big(\exp(-V_1)+I\big) p^{n+1}\\
		&=-U(p^{n+1}, q^{n}) + U(p^{n}, q^n)-\frac{1}{2}\overline{\nabla}_pU^\top\big(\exp(V_1)+I\big)p^{n}+\frac{1}{2}\overline{\nabla}_pU^\top \big(\exp(-V_1)+I\big) p^{n+1}.
	\end{aligned}
\]
Further inserting the expression of $p^{n+1}$, we  obtain
\[\begin{aligned}		
	&-\frac{1}{2}\overline{\nabla}_pU^\top\big(\exp(V_1)+I\big)p^{n}+\frac{1}{2}\overline{\nabla}_pU^\top \big(\exp(-V_1)+I\big) p^{n+1}\\
		&=-\frac{1}{2}\overline{\nabla}_pU^\top\big(\exp(V_1)+I\big)p^{n}+\frac{1}{2}\overline{\nabla}_pU^\top \big(\exp(-V_1)+I\big) \big(\exp(V_1)p^n+\tau\varphi(V_1)S_1\overline{\nabla}_pU\big)\\
		&=\frac{1}{2}\tau\overline{\nabla}_pU^\top \big(\exp(-V_1)\varphi(V_1)S_1+\varphi(V_1)S_1\big)\overline{\nabla}_pU\\
		&=\frac{1}{2}\tau\overline{\nabla}_pU^\top \big(\varphi(-V_1)S_1+\varphi(V_1)S_1\big)\overline{\nabla}_pU\quad\mbox{(Property (6))}\\
		&=\frac{1}{2}\tau\overline{\nabla}_pU^\top \big(S_1\varphi(V_1)^\top+\varphi(V_1)S_1\big)\overline{\nabla}_pU\quad\mbox{(Property (4))}\\
		&=0,
	\end{aligned}
	\]
	where the last equality is obtained by the skew-symmetry of $S\varphi(V)^\top+\varphi(V)S$. Therefore, we have 
		\begin{equation}\label{eneprf1}
		\dfrac{1}{2}(q^{n+1})^{\top}L_1q^{n+1} - \dfrac{1}{2}(q^{n})^{\top}L_1q^{n} = -U(p^{n+1}, q^{n}) + U(p^{n}, q^n).
	\end{equation}
	Similarly, we can derive
		\begin{equation}\label{eneprf2}
		\dfrac{1}{2}(q^{n+1})^{\top}L_2q^{n+1} - \dfrac{1}{2}(q^{n})^{\top}L_2q^{n} = -U(p^{n+1}, q^{n+1}) + U(p^{n+1}, q^n).
	\end{equation}
	Summing \eqref{eneprf1} and \eqref{eneprf2} together completes the proof. 
\end{proof}

\begin{rmk}
	When $i=1$, the Hamiltonian system \eqref{eq1-1} reduces to
	\begin{equation}\label{single}
	\dot{y}=S(Ly+\nabla U(y)),
	\end{equation}
	and EPAVF becomes EAVF proposed in \cite{li2016exponential}. Subsequently, the above proof technique can naturally be employed to prove the energy conservation of EAVF. Although the authors in \cite{li2016exponential} have given an alternative proof, our method is much simpler and does not require the discussion by situations whether $L$ is singular or not, which was made in the proof procedures in \cite{li2016exponential}.

\end{rmk}

\begin{rmk}
		Generally, the resulting numerical schemes of EPAVF at least can be solved by subsystem. In many specific cases, the efficiency of EPAVF can be further improved if the Hamiltonian functions or functionals have some specific forms. In the circumstances such as the coupled Schr\"odinger-KdV equations \cite{CAI2018200} and the Schr\"odinger-Boussinesq equations \cite{LIAO201893}, the nonlinear potential takes form of $U(p, q) = F(p) + pq^2$, where $F(p)$ is a nonlinear function. When applying EPAVF, the variable $q^{n+1}$ can be updated by a linear solver. Moreover, for the KGS and the KGZ equations, whose nonlinear potentials are at most quadratic with respect to each components, the resulting scheme of EPAVF can be linearly implicit and even partly explicit. Such an argument will be shown in the next section.
\end{rmk}

\begin{rmk}
	In \cite{shen2019a}, the authors also studied the exponential integrators for the single Hamiltonian system \eqref{single} and proved the energy conservation. However, the proof was made under the assumption that $S$ and $L$ commute. For the nonlinear Schr\"odinger equation and KdV equation considered in \cite{shen2019a}, this assumption does satisfy. But for a wide class of Hamiltonian systems (e.g., the KGS and KGZ equations), this assumption is not essential. In contrast, our proof approach has nothing additional assumptions on $S$ and $L$, as long as the model problem can be written into the form of \eqref{eq1-1}.
\end{rmk}

\begin{coro}
	The EPAVF--C method \eqref{composition} also possesses the discrete energy conservation law \eqref{energy}.
\end{coro}

\begin{proof}
	Since EPAVF and its adjoint both possess the same discrete energy, their composition is obviously energy conservation.
\end{proof}

\section{Numerical examples}\label{Numerical Experiments}
Taking the KGS and KGZ equations for model equations, in this section, we demonstrate the detailed derivation of the EPAVF methods, and the computational efficiency will be revealed from the resulting schemes.

\subsection{Klein-Gordon-Schr\"odinger equations}  
Consider the following nonlinear KGS equations
\begin{equation}\label{KGS origin}
	\begin{cases}
		i\psi_t\left(\mathbf{x}, t\right) + \beta\Delta\psi\left(\mathbf{x}, t\right) + u\left(\mathbf{x}, t\right)\psi\left(\mathbf{x}, t\right) = 0, \\ 
		\varepsilon^{2}u_{tt}\left(\mathbf{x}, t\right) - \Delta u\left(\mathbf{x}, t\right) + \dfrac{1}{\varepsilon^{2}}u\left(\mathbf{x}, t\right) - |\psi\left(\mathbf{x}, t\right)|^{2} = 0, \\
		\psi(\textbf{x}, 0) = \psi_{0}(\textbf{x}), \ u(\textbf{x}, 0) = u_{0}(\textbf{\textbf{x}}), \ \partial_{t}u(\textbf{x}, 0) = \dfrac{1}{\varepsilon^{2}}u_{1}(\textbf{x}),
	\end{cases}
\end{equation}
with periodic boundary conditions for both $\psi$ and $u$, where $\psi$ represents a complex-valued scalar nucleons field, $u$ is a real-valued scalar meson field, and $0<\varepsilon\leq 1$ is a dimensionless parameter inversely proportional to the speed of light. The KGS model describes a system of conserved scalar nucleons interacting with the neutral scalar mesons coupled through the Yukawa interactions \cite{makhankov1978dynamics}. By introducing $\psi = q + pi$ (both $q, p$ are real-valued variables) and an intermediate variable $v = u_t$, the KGS equations (\ref{KGS origin}) can be reformed into a first-order system
\begin{equation}\label{KGS 1st order}
	\begin{cases}
		q_t + \beta \Delta p+ p u = 0,  \\
		p_t - \beta \Delta q - q u = 0,  \\
		u_t = v,\\
		\varepsilon^{2}v_t - \Delta u  + \dfrac{1}{\varepsilon^{2}} u - q^{2} - p^{2} = 0.  \\
	\end{cases}
\end{equation}
Let $z = \left( q, p, u, v\right)^{\top}$. We can rewrite the above equations \eqref{KGS 1st order} into a compact infinite-dimensional Hamiltonian system as
\begin{equation*}
	z_t = \mathcal{D} \dfrac{\delta \mathcal{H}}{\delta z}, \quad \mbox{with}\quad
	\mathcal{D}=
	\begin{pmatrix}
		0 & \frac{1}{2} & 0 & 0 \\ 
		-\frac{1}{2} & 0 & 0 & 0 \\ 
		0 & 0 & 0 & \frac{1}{\varepsilon^{2}} \\ 
		0 & 0 & -\frac{1}{\varepsilon^{2}} & 0
	\end{pmatrix},
\end{equation*}
and the conservative Hamiltonian functional reads
\begin{equation*}
	\mathcal{H}(t) = \int_{\Omega} \frac{1}{2}\left(\frac{1}{\varepsilon^{2}} u^{2} + \varepsilon^{2} v^{2} + |\nabla u|^{2}\right) + \beta\left(|\nabla q|^{2} + |\nabla p|^{2}\right) - \left(q^{2} + p^{2}\right)u \ d\textbf{x}.
\end{equation*}

\subsubsection{Spatial discretization}\label{Fourier discrete}
Since we consider the KGS equations under periodic boundary conditions, the spatial discretization is done by the Fourier pseudospectral method. For simplification, we take the one-dimensional case to illustrate the semi-discretization briefly, and similar procedures can be generalized to higher dimensions. We leave a brief derivation of EPAVF for 2D KGS equations in \ref{appendix}.

Let the computation domain $\Omega = \left[a,b\right]$ and $N$ be a given positive even integer. Then, the spatial step is defined as $h = (b-a)/N$ and the mesh grid is denoted by $\Omega_h=\{x_j|x_j=a+jh,j=0,\cdots,N\}$. Let $V_h=\{\textbf{v}| \textbf{v}=(v_1,v_2,\cdots,v_{N})\}$ be the space of grid functions on $\Omega_h$.  Throughout this paper, we use bold letters $\textbf{u}, \textbf{v},\cdots$ to represent vectors. Applying the Fourier pseudospectral method to the KGS system \eqref{KGS 1st order}, we obtain
\begin{equation}\label{KGS-semi}
	\begin{cases}
		\textbf{q}_t + \beta \mathbb{D}_2 \textbf{p} +  \textbf{p} \odot\textbf{u} = 0,  \\
		\textbf{p}_t - \beta \mathbb{D}_2  \textbf{q }-  \textbf{q}  \odot\textbf{u} = 0,  \\
		\textbf{u}_t =  \textbf{v},\\
		\varepsilon^{2}\textbf{v}_t - \mathbb{D}_2 \textbf{u}  + \dfrac{1}{\varepsilon^{2}}  \textbf{u} -  \textbf{q}^{2} -  \textbf{p}^{2} = 0,
	\end{cases}
\end{equation}
where $\odot$ represents the point multiplication, $\mathbb{D}_2$ is the second-order spectral differential matrix  which can be diagonalized by
\begin{equation*}
	\mathbb{D}_2 = F_N^{-1}\Lambda_{per}F_N, \quad \Lambda_{per}=-\big[\mu \mbox{diag}(0,1,\cdots,N/2, -N/2+1,\cdots,-1)\big]^2,
\end{equation*}
where $\mu=2\pi/(b-a)$, $F_N$ and $F_N^{-1}$ represent the discrete Fourier transform and its inverse, respectively  \cite{shen2011spectral}. As a consequence, in the practical computation, we can utilize the fast Fourier transform (FFT)  to reduce the computational cost significantly.

After arrangement, we can rewrite the above system \eqref{KGS-semi} into the standard form of \eqref{eq1-1} as follows:
\begin{equation}\label{KGS semi}
	\left\lbrace\begin{aligned}
		&\left(\begin{array}{c}
			\textbf{q}_t\\
			\textbf{p}_t
		\end{array}\right)
		=\left(\begin{array}{cc}
			0 & \frac{1}{2}I_N\\
			-\frac{1}{2}I_N & 0
		\end{array}\right)\left[\left(\begin{array}{cc}
			-2\beta\mathbb{D}_2 & 0\\
			0 & -2\beta\mathbb{D}_2
		\end{array}\right)\left(\begin{array}{c}
			{\textbf{q}}\\
			{\textbf{p}}
		\end{array}\right)+\left(\begin{array}{c}
		-2\textbf{q}\odot\textbf{u}\\
		-2\textbf{p}\odot\textbf{u}
		\end{array}\right)\right],\\[1ex]
		&\left(\begin{array}{c}
			\textbf{u}_t\\
			\textbf{v}_t
		\end{array}\right)
		=\left(\begin{array}{cc}
			0 & \frac{1}{\varepsilon^2}I_N\\
			-\frac{1}{\varepsilon^2}I_N & 0
		\end{array}\right)\left[\left(\begin{array}{cc}
			-\mathbb{D}_2+\frac{1}{\varepsilon^2}I_N & 0\\
			0 & \varepsilon^2I_N
		\end{array}\right)\left(\begin{array}{c}
			{\textbf{u}}\\
			{\textbf{v}}
		\end{array}\right)+\left(\begin{array}{c}
			-(\textbf{p}^2+\textbf{q}^2)\\
			0
		\end{array}\right)\right],
	\end{aligned}\right.
\end{equation}
where $I_N$ is the identity matrix of dimension $N$. Notice that under periodic boundary conditions, $\mathbb{D}_2$ is symmetric thus, the semi-discrete system \eqref{KGS semi} satisfies the energy conservation in a discrete sense, i.e.,
\begin{equation}\label{kgs-semi-ene}
	\frac{dH}{dt}=0,\ H= \dfrac{1}{2} \left( \frac{1}{\varepsilon^{2}} \|\mathbf{u}\|_{h, per}^{2} +  \varepsilon^{2}\|\mathbf{v}\|_{h, per}^{2} + |\mathbf{u}|_{1, per}^{2} \right) + \beta \left(|\mathbf{q}|_{1, per}^{2} + |\mathbf{p}|_{1, per}^{2}\right) - \left(\mathbf{q}^{2} + \mathbf{p}^{2},\mathbf{u}\right)_{h,per},
\end{equation}
where the discrete inner product and the corresponding norms are defined as
\begin{equation*}
	\left(\mathbf{u}, \mathbf{v}\right)_{h,per} = h\sum\limits_{l = 0}^{N-1} \mathbf{u}_{l}\overline{\mathbf{v}}_{l}, \quad \Vert \mathbf{u} \Vert_{h,per} = \left(\mathbf{u}, \mathbf{u}\right)_{h,per} ^{1/2}, \quad |\mathbf{u}|_{1,per} = \left(-\mathbb{D}_2\mathbf{u}, \mathbf{u}\right)_{h,per}^{1/2},
\end{equation*}
for any grid functions $\textbf{u}, \textbf{v}$. Here $\overline{\mathbf{v}}$ represents the complex conjugate of $\mathbf{v}$ and the subscript `\textit{per}' is used for the periodic boundary condition.

\subsubsection{Derivation of the EPAVF schemes}

To apply the EPAVF methods, we need to calculate the matrix exponentials $\exp(V_i)$ and $\varphi(V_i)$, $i=1,2$. For the semi-discretization  \eqref{KGS semi}, we obtain
\begin{equation}\label{expV}
	\exp(V_1) = 
	\left(\begin{array}{cc}
		\cos(\tau \beta \mathbb{D}_2) & -\sin(\tau \beta \mathbb{D}_2)\\ 
		\sin(\tau \beta \mathbb{D}_2) & \cos(\tau \beta \mathbb{D}_2)
	\end{array}\right), \quad
	\exp(V_2)= 
	\left(\begin{array}{cc}
		\cos(\tau \widetilde{\mathbb{D}}_2)& \frac{\sin( \tau \widetilde{\mathbb{D}}_2) }{ \widetilde{\mathbb{D}}_2}	\\ 
		-\widetilde{\mathbb{D}}_2\sin( \tau \widetilde{\mathbb{D}}_2) & \cos(\tau \widetilde{\mathbb{D}}_2)
	\end{array} \right),
\end{equation}
and
\begin{equation}\label{phiV}
	\varphi(V_1) = 
	\left(\begin{array}{cc}
		\frac{\sin(\tau \beta \mathbb{D}_2)}{\tau\beta \mathbb{D}_2} & \frac{\cos(\tau \beta \mathbb{D}_2)-I_{N}}{\tau\beta \mathbb{D}_2} \\ [1ex]
		\frac{I_{N} - \cos(\tau \beta \mathbb{D}_2)}{\tau\beta \mathbb{D}_2} & \frac{\sin(\tau \beta \mathbb{D}_2)}{\tau\beta \mathbb{D}_2}
	\end{array}\right), \ 
	\varphi(V_2) = 
	\left(\begin{array}{cc}
		\frac{\sin( \tau \widetilde{\mathbb{D}}_2 ) } {\tau\widetilde{\mathbb{D}}_2} & \frac{   I_{N} - \cos(\tau \widetilde{\mathbb{D}}_2) }{\tau\widetilde{\mathbb{D}}_2^{2}}  \\[1ex]
		\frac{\cos( \tau \widetilde{\mathbb{D}}_2 ) - I_{N}}{\tau} & \frac{\sin( \tau \widetilde{\mathbb{D}}_2 )  }{\tau\widetilde{\mathbb{D}}_2}
	\end{array}\right),
\end{equation}
where $\widetilde{\mathbb{D}}_2 = \frac{1}{\varepsilon^{2}} \left(I_{N} - \varepsilon^{2}\mathbb{D}_2\right)^{1/2}$. For the simplification of notations, we further denote
\begin{equation*}
	\exp(V_i)= 
	\left(\begin{array}{cc}
		\exp_{11}^i & \exp_{12}^i\\
		\exp^i_{21} & \exp_{22}^i
	\end{array}\right), \ 
	\varphi(V_i) = 
	\left(\begin{array}{cc}
		\varphi_{11}^i & \varphi_{12}^i \\ 
		\varphi_{21}^i & \varphi_{22}^i
	\end{array}\right) , \ 
	i = 1,2. 
\end{equation*}
\begin{rmk}
	As well known, the efficiency of exponential integrators highly depends on the computation of the matrix exponentials. Fortunately, under periodic boundary conditions,  fast Fourier transformations can be utilized to significantly reduce the computational cost. Denote the eigenvalues of $\widetilde{\mathbb{D}}_2 $ by $\widetilde{{\Lambda}}_{per}$ with $\widetilde{{\Lambda}}_{per}=\frac{1}{\varepsilon^{2}} \left(I_{N}- \varepsilon^{2}\Lambda_{per}\right)^{1/2}$. Then $\exp(V_i)$ and $\varphi(V_i)$ can be calculated by 
	\begin{equation}\label{computed of exponential}
		\exp(V_i)=\left(\begin{array}{cc}
			F_N^{-1}	&  0\\
			0	& F_N^{-1}
		\end{array}
		\right)\left(\begin{array}{cc}
			\widetilde{\exp}_{11}^i & \widetilde{\exp}_{12}^i\\
			\widetilde{\exp}^i_{21} & \widetilde{\exp}_{22}^i
		\end{array}\right)\left(\begin{array}{cc}
			F_N	&  0\\
			0	& F_N
		\end{array}
		\right),
	\end{equation}
	and 
	\begin{equation}
		\varphi(V_i)=\left(\begin{array}{cc}
			F_N^{-1}	&  0\\
			0	& F_N^{-1}
		\end{array}
		\right)\left(\begin{array}{cc}
			\widetilde{\varphi}_{11}^i & \widetilde{\varphi}_{12}^i\\
			\widetilde{\varphi}^i_{21} & \widetilde{\varphi}_{22}^i
		\end{array}\right)\left(\begin{array}{cc}
			F_N	&  0\\
			0	& F_N
		\end{array}
		\right),
	\end{equation}
	where $\widetilde{\exp}^i_{jk}$ and $\widetilde{\varphi}^i_{jk}$, $i,j,k=1,2$ are obtained by replacing $\mathbb{D}_2$ and $\widetilde{\mathbb{D}}_2$ with ${\rm diag}(\Lambda_{per})$ and ${\rm diag}(\widetilde{\Lambda}_{per})$ in \eqref{expV}-\eqref{phiV}, respectively.
	
\end{rmk}

\begin{rmk}
	Notice that the first component of $\Lambda_{per}$ is zero, so additional treatment should be employed to calculate $\varphi(V_1)$ or $\widetilde{\varphi}_{ij}^1, i = 1,2$. For examples,
	\[
	\widetilde{\varphi}_{11}^1={\rm diag}\Big[1,\frac{\sin(\tau\beta\lambda_2)}{\tau\beta\lambda_2},\cdots,\frac{\sin(\tau\beta\lambda_N)}{\tau\beta\lambda_N}\Big],\quad 	\widetilde{\varphi}_{12}^1={\rm diag}\Big[0,\frac{\cos(\tau\beta\lambda_2)-1}{\tau\beta\lambda_2},\cdots,\frac{\cos(\tau\beta\lambda_N)-1}{\tau\beta\lambda_N}\Big],
	\]
	where $\lambda_k$, $k=2,\cdots,N$ represents the $k$-th component of $\Lambda_{per}$. Similar results can be obtained for $\widetilde{\varphi}_{21}^1$ and  $\widetilde{\varphi}_{22}^1$.
\end{rmk}

With the above notations,  the EPAVF scheme \eqref{epavf 2 block} for the KGS equations can be written as
\begin{equation}\label{scheme-kgs}
	\begin{cases}
		\textbf{q}^{n+1} = \exp_{11}^1\textbf{q}^{n} + \exp_{12}^1\textbf{p}^{n} - \tau\varphi_{11}^1 \left(\textbf{u}^{n}\odot \textbf{p}^{n+1/2}\right)+  \tau \varphi_{12}^1 \left(\textbf{u}^{n} \odot \textbf{q}^{n+1/2}\right) ,  \\ 
		\textbf{p}^{n+1} =  \exp_{21}^1\textbf{q}^{n} + \exp_{22}^1\textbf{p}^{n} - \tau \varphi^1_{21} \left(\textbf{u}^{n}\odot \textbf{p}^{n+1/2}\right)+  \tau \varphi^1_{22} \left(\textbf{u}^{n} \odot \textbf{q}^{n+1/2}\right) , \\
		\textbf{u}^{n+1} =\exp_{11}^2\textbf{u}^{n} + \exp_{12}^2\textbf{v}^{n} + \frac{\tau}{\varepsilon^{2}}\varphi_{12}^2 \left( \left(\textbf{q}^{n+1}\right)^2 + \left(\textbf{p}^{n+1}\right)^2 \right), \\ 
		\textbf{v}^{n+1} = \exp_{21}^2\textbf{u}^{n} + \exp_{22}^2\textbf{v}^{n} + \frac{\tau}{\varepsilon^{2}}\varphi_{22}^2 \left( \left(\textbf{q}^{n+1}\right)^2 + \left(\textbf{p}^{n+1}\right)^2 \right).
	\end{cases}
\end{equation}
In view of scheme \eqref{scheme-kgs}, the computation of $\textbf q^{n+1}, \textbf p^{n+1}$ and $\textbf u^{n+1}, \textbf v^{n+1}$ is separated. Moreover, the system of $\textbf q^{n+1}, \textbf p^{n+1}$ is linearly implicit with variable coefficients. Although linear solvers can then be utilized, we find that a fixed-point iteration will be much more efficient in the practical implementation. Once $\textbf q^{n+1}$ and $\textbf p^{n+1}$ are obtained, the system of $\textbf u^{n+1}, \textbf v^{n+1}$ is solved explicitly. For the adjoint of scheme \eqref{scheme-kgs}, we have
\begin{equation}\label{adjoint-kgs}
	\begin{cases}
		\textbf{q}^{n+1} = \exp_{11}^1\textbf{q}^{n} + \exp_{12}^1\textbf{p}^{n} - \tau\varphi_{11}^1 \left(\textbf{u}^{n+1}\odot \textbf{p}^{n+1/2}\right)+  \tau \varphi_{12}^1 \left(\textbf{u}^{n+1} \odot \textbf{q}^{n+1/2}\right) ,  \\ 
		\textbf{p}^{n+1} =  \exp_{21}^1\textbf{q}^{n} + \exp_{22}^1\textbf{p}^{n} - \tau \varphi^1_{21} \left(\textbf{u}^{n+1}\odot \textbf{p}^{n+1/2}\right)+  \tau \varphi^1_{22} \left(\textbf{u}^{n+1} \odot \textbf{q}^{n+1/2}\right) , \\
		\textbf{u}^{n+1} =\exp_{11}^2\textbf{u}^{n} + \exp_{12}^2\textbf{v}^{n} + \frac{\tau}{\varepsilon^{2}}\varphi_{12}^2 \left( \left(\textbf{q}^{n}\right)^2 + \left(\textbf{p}^{n}\right)^2 \right), \\ 
		\textbf{v}^{n+1} = \exp_{21}^2\textbf{u}^{n} + \exp_{22}^2\textbf{v}^{n} + \frac{\tau}{\varepsilon^{2}}\varphi_{22}^2 \left( \left(\textbf{q}^{n}\right)^2 + \left(\textbf{p}^{n}\right)^2 \right),
	\end{cases}
\end{equation}
and the procedure of computation is contrary to that of \eqref{scheme-kgs}. An explicit solver is firstly applied to the system of $\textbf u^{n+1}, \textbf v^{n+1}$, and then a fixed-point iteration is used to solve the linear systems of $\textbf q^{n+1}, \textbf p^{n+1}$. The second-order EPAVF-C scheme is constructed by the composition of schemes \eqref{scheme-kgs} and \eqref{adjoint-kgs}, which will still be highly efficient than existing energy-preserving schemes for the KGS equations from the following numerical experiments.

\begin{rmk}
		Although Cai constructed PAVF schemes for the KGS which is linearly implicit in \cite{cai2018partitioned}, one notice that the computational cost can be further reduced here. The variables $\mathbf{u}^{n+1}$, $\mathbf{v}^{n+1}$ are updated explicitly in \eqref{scheme-kgs}, \eqref{adjoint-kgs}.
\end{rmk}

\begin{thm}
	The EPAVF schemes, i.e., \eqref{scheme-kgs}, \eqref{adjoint-kgs} and their composition, all preserve a fully discrete energy conservation law
	\[
	H(\textbf{q}^{n+1},\textbf{p}^{n+1},\textbf{u}^{n+1},\textbf{v}^{n+1})=H(\textbf{q}^n,\textbf{p}^n,\textbf{u}^n,\textbf{v}^n),
	\]
	where the energy function $H$ is defined in \eqref{kgs-semi-ene}.
\end{thm}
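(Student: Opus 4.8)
The plan is to prove the statement not by a fresh computation but by recognizing the fully discrete schemes \eqref{scheme-kgs}, \eqref{adjoint-kgs} as concrete instances of the abstract EPAVF method \eqref{epavf 2 block} and its adjoint \eqref{epavf 2 block adjoint}, and then invoking the general energy-conservation theorem established in Section~\ref{Properties}. The semi-discrete system \eqref{KGS semi} is already cast in the two-block form \eqref{model 2 bolock}: setting $y_1=(\mathbf{q},\mathbf{p})^\top$ and $y_2=(\mathbf{u},\mathbf{v})^\top$, one reads off
\[
S_1=\begin{pmatrix}0&\tfrac12 I_N\\-\tfrac12 I_N&0\end{pmatrix},\quad
L_1=\begin{pmatrix}-2\beta\mathbb{D}_2&0\\0&-2\beta\mathbb{D}_2\end{pmatrix},
\]
\[
S_2=\begin{pmatrix}0&\tfrac1{\varepsilon^2}I_N\\-\tfrac1{\varepsilon^2}I_N&0\end{pmatrix},\quad
L_2=\begin{pmatrix}-\mathbb{D}_2+\tfrac1{\varepsilon^2}I_N&0\\0&\varepsilon^2 I_N\end{pmatrix},
\]
together with the coupling potential $U=-(\mathbf{q}^{2}+\mathbf{p}^{2},\mathbf{u})_{h,per}$, the interaction term of \eqref{kgs-semi-ene}.

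First I would verify the structural hypotheses required by the abstract theorem: under periodic boundary conditions $\mathbb{D}_2$ is symmetric (as noted after \eqref{KGS semi}), whence $L_1,L_2$ are symmetric, while $S_1,S_2$ are manifestly skew-symmetric, so each block is of the admissible form \eqref{eq1-1}. I would then check that the abstract Hamiltonian \eqref{energy 2 block}, evaluated with these $L_1,L_2,U$, coincides term by term with the semi-discrete energy \eqref{kgs-semi-ene}: the quadratic form $\tfrac12 y_1^\top L_1 y_1$ reproduces $\beta(|\mathbf{q}|_{1,per}^2+|\mathbf{p}|_{1,per}^2)$, the form $\tfrac12 y_2^\top L_2 y_2$ reproduces $\tfrac12(\tfrac1{\varepsilon^2}\|\mathbf{u}\|_{h,per}^2+\varepsilon^2\|\mathbf{v}\|_{h,per}^2+|\mathbf{u}|_{1,per}^2)$, and $U$ supplies the coupling term. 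Since $\exp(V_i)$ and $\varphi(V_i)$ in \eqref{expV}--\eqref{phiV} are, by construction, the matrix functions appearing in \eqref{epavf 2 block}, only the discrete gradient remains to be matched.

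The key, and only slightly delicate, step is to confirm that the explicit nonlinear terms in \eqref{scheme-kgs} are exactly the components $S_i\overline{\nabla}_{y_i}U$ prescribed by \eqref{dg epavf}. Here I would exploit that $U$ is quadratic in each block's own variables with the coupling variable frozen, and linear in that coupling variable. For the first block the averaged-vector-field integral $\int_0^1\nabla_{y_1}U(\xi\hat y_1+(1-\xi)y_1,y_2)\,d\xi$ collapses to a midpoint evaluation, producing the factors $\mathbf{q}^{n+1/2}=\tfrac12(\mathbf{q}^n+\mathbf{q}^{n+1})$ and $\mathbf{p}^{n+1/2}$, while the coordinate-increment structure freezes the meson field at the old value $\mathbf{u}^n$; applying $S_1$ and $\tau\varphi(V_1)$ then reproduces the first two lines of \eqref{scheme-kgs}. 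For the second block, $\nabla_{y_2}U=(-(\mathbf{q}^2+\mathbf{p}^2),0)^\top$ is independent of $(\mathbf{u},\mathbf{v})$, so its integral is trivial and the coordinate-increment rule evaluates $(\mathbf{q},\mathbf{p})$ at the new level, giving $(\mathbf{q}^{n+1})^2+(\mathbf{p}^{n+1})^2$; applying $S_2$ and $\tau\varphi(V_2)$ recovers the last two lines. This bookkeeping, together with tracking the symmetry of $\mathbb{D}_2$ and the correct old/new placement of the coupling variable, is the main obstacle, since everything hinges on the AVF integrals reducing to the stated midpoints and on the split $\psi=\mathbf{q}+\mathbf{p}i$ yielding genuinely real skew/symmetric data.

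With the identification complete, \eqref{scheme-kgs} and \eqref{adjoint-kgs} are precisely \eqref{epavf 2 block} and \eqref{epavf 2 block adjoint} for the KGS data, so the general EPAVF energy-conservation theorem of Section~\ref{Properties} applies verbatim and yields $H(\mathbf{q}^{n+1},\mathbf{p}^{n+1},\mathbf{u}^{n+1},\mathbf{v}^{n+1})=H(\mathbf{q}^n,\mathbf{p}^n,\mathbf{u}^n,\mathbf{v}^n)$ with $H$ as in \eqref{kgs-semi-ene}. The analogous identification with \eqref{dg epavf adjoint} handles the adjoint scheme \eqref{adjoint-kgs}, and the conservation for the composition scheme follows immediately because both half-step flows share the same conserved energy, exactly as in the corollary following the general theorem.
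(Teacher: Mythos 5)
Your proposal is correct and matches the paper's intent: the paper states this theorem without a separate proof precisely because the fully discrete scheme \eqref{scheme-kgs}--\eqref{adjoint-kgs} is an instance of the abstract EPAVF method \eqref{epavf 2 block}--\eqref{epavf 2 block adjoint} applied to the two-block form \eqref{KGS semi}, so the general energy-conservation theorem and its corollary from Section~\ref{Properties} apply directly. Your identification of $S_i$, $L_i$, $U$, and the reduction of the AVF integrals to midpoint evaluations is exactly the bookkeeping the paper leaves implicit.
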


\subsubsection{Numerical experiments}

\begin{ex}
	Consider the one-dimensional KGS equations on the spatial interval $\Omega= [-32, 32]$  with $\beta=1$ and the initial conditions $\psi_{0}, u_{0}, u_{1}$ given in \eqref{KGS origin} as follows
	\begin{equation*}
		\psi_{0}(x) = \frac{1+i}{2}\mbox{\rm sech}{\left(x^{2}\right)}, \quad u_{0}(x) = \frac{1}{2}\exp{\left(-x^{2}\right)}, \quad u_{1}(x) = \frac{1}{\sqrt{2}}\exp{\left(-x^{2}\right)}.
	\end{equation*} 
\end{ex}

We first verify the time accuracy of EPAVF-C \eqref{scheme-kgs}-\eqref{adjoint-kgs} for the KGS equations at $t=1$. The  errors with respect to $\psi$ and $u$ are defined by
\begin{equation*}
	\begin{aligned}
		&e_{\psi,\varepsilon}^{\tau,h}:=  \max\Big\{\| {\rm Re}\left(\psi(\cdot, 1) - \psi^n\right) \|_\infty ,   \| {\rm Im}\left(\psi(\cdot, 1) - \psi^n \right) \|_\infty \Big\} , \\ 
		&e_{u,\varepsilon}^{\tau,h}:= \| u(\cdot,1) - u^n \|_\infty,
	\end{aligned}
\end{equation*}
where ${\rm Re}(\cdot)$ and ${\rm Im}(\cdot)$ represent the real and imaginary parts of a complex variable, respectively. Since the exact solution under the above initial conditions is not available, reference solutions of $\psi(\cdot, 1)$ and $u(\cdot, 1)$ are obtained by EPAVF-C under a very fine mesh $h = 1/32$ and time step $\tau = 2.5 \times 10^{-6}$.  Tables~\ref{tab1}-\ref{tab5} list the results of EPAVF-C and other three  existing schemes, i.e., EAVF \cite{li2016exponential}, PAVF-C \cite{cai2018partitioned} and AVF \cite{mclachlan1999geometric}  with different $\varepsilon$ for comparison. A very small mesh size $h = 1/8$ is used, which guarantees that the error caused by spatial discretization can be ignored. For EPAVF-C, we present the convergence results of both $\psi$ and $u$, while for others we only show the results of $u$ as a similar result can be found for $\psi$. 

 From Tables~\ref{tab1}-\ref{tab5}, we can draw the following observations: 
 \begin{itemize}
 	\item[(i)] All the schemes exhibit a second-order convergence w.r.t different $\varepsilon$ provided the time step $\tau$ is sufficiently small, and the approximation in $u$ is more accurate than $\psi$ (cf. Tables~\ref{tab1}-\ref{tab2}). Moreover, the two exponential ones (EPAVF-C and EAVF) have smaller errors than the non-exponential ones (PAVF-C and AVF). 
 	
 	\item[(ii)] The mesh strategy (or $\varepsilon$-scalability) of EPAVF-C and EAVF is $\tau=\mathcal{O}(\varepsilon^2)$, in order to compute a ``correct'' solution (cf. upper triangle above the diagonal with values in italics of Tables~\ref{tab1}-\ref{tab3}). While the $\varepsilon$-scalability of PAVF-C and AVF is  $\tau=\mathcal{O}(\varepsilon^3)$  (cf. upper triangle above the diagonal with values in italics of Tables~\ref{tab4}-\ref{tab5}). Therefore, a much smaller time step is required to get the correct approximations and the convergence results for the non-exponential integrators.
 \end{itemize}

We also present the snapshots of numerical solutions of $\psi$ and $u$ at $t=10$ in Figure~\ref{SOLUTIONS T=10}. The wave profiles of EPAVF-C and EAVF are highly matched with the reference ones, while those of PAVF-C and AVF are clearly wrong so that the oscillatory waves cannot be captured, which demonstrates the superior advantage of exponential integrators for dealing with highly oscillatory problems.

\begin{table}[H]
        \centering
        \caption{Temporal error analysis of $\psi$ solved by EPAVF-C with different $\varepsilon$}
        \begin{tabular*}{0.9\textwidth}[h]{@{\extracolsep{\fill}}c c c c c c c c c}	
            \toprule[2pt]
            & & $\tau_0 = 0.2$ & $\tau_0/2^2$ & $\tau_0/2^4$ & $\tau_0/2^6$ & $\tau_0/2^8$ & $\tau_0/2^{10}$ \\  
            \hline
             \multirow{2}{*}{$\varepsilon_0=1$} & $e_{\psi,\varepsilon}^{\tau,h}$ & 1.047e-03 & 6.6548e-05 & 4.1574e-06 & 2.5990e-07 & 1.6314e-08 & 1.0896e-09 \\ 
            & Rate & - & 2.0064 & 2.0003 & 1.9998 & 1.9969 & 1.9521 \\ 
            \hline 
            \multirow{2}{*}{$\varepsilon_0/2$} & $e_{\psi,\varepsilon}^{\tau,h}$ & \textit{4.7450e-03} & 2.9812e-04 & 1.8634e-05 & 1.1646e-06 & 7.2797e-08 & 4.5565e-09 \\ 
            & Rate & - & 1.9962 & 1.9999 & 2.0000 & 1.9999 & 1.9989 \\ 
            \hline 
            \multirow{2}{*}{$\varepsilon_0/2^2$} & $e_{\psi,\varepsilon}^{\tau,h}$ & 1.9743e-02 & \textit{9.6187e-04} & 5.9483e-05 & 3.7153e-06 & 2.3222e-07 & 1.4535e-08 \\ 
            & Rate & - & 2.1797 & 2.0076 & 2.0005 & 2.0000 & 1.9989 \\ 
            \hline 
            \multirow{2}{*}{$\varepsilon_0/2^3$} & $e_{\psi,\varepsilon}^{\tau,h}$ & 2.3970e-01 & 6.4782e-03 & \textit{3.3094e-04} & 2.0468e-05 & 1.2785e-06 & 7.9910e-08 \\ 
            & Rate & - & 2.6408 & 2.1455 & 2.0076 & 2.0005 & 1.9999 \\ 
            \hline 
            \multirow{2}{*}{$\varepsilon_0/2^4$} & $e_{\psi,\varepsilon}^{\tau,h}$ & 5.8882e-02 & 6.4276e-02 & 8.9693e-04 & \textit{4.6231e-05} & 2.8605e-06 & 1.7863e-07 \\ 
            & Rate & - & -0.0632 & 3.0816 & 2.1390 & 2.0073 & 2.0006 \\ 
            \hline 
            \multirow{2}{*}{$\varepsilon_0/2^5$} & $e_{\psi,\varepsilon}^{\tau,h}$ & 1.0082e-02 & 8.2454e-03 & 8.9167e-03 & 1.6962e-04 & \textit{8.7355e-06} & 5.4038e-07 \\ 
            & Rate & - & 0.1450 & -0.0565 & 2.8581 & 2.1397 & 2.0074 \\ 
            \bottomrule[2pt]
       \end{tabular*}
        \label{tab1}
    \end{table}

    \begin{table}[H]
        \centering
        \caption{Temporal error analysis of $u$ solved by EPAVF-C with different $\varepsilon$}
        \begin{tabular*}{0.9\textwidth}[h]{@{\extracolsep{\fill}}c c c c c c c c c}
            \toprule[2pt]
            & & $\tau_0 = 0.2$ & $\tau_0/2^2$ & $\tau_0/2^4$ & $\tau_0/2^6$ & $\tau_0/2^8$ & $\tau_0/2^{10}$ \\  
            \hline
             \multirow{2}{*}{$\varepsilon_0=1$} & $e_{u,\varepsilon}^{\tau,h}$ & 7.7086e-04 & 2.8692e-05 & 1.7527e-06 & 1.0956e-07 & 6.8640e-09 & 4.4522e-10 \\ 
            & Rate & - & 2.3739 & 2.0165 & 1.9999 & 1.9983 & 1.9732 \\ 
            \hline 
            \multirow{2}{*}{$\varepsilon_0/2$} & $e_{u,\varepsilon}^{\tau,h}$ & \textit{1.8925e-03} & 7.9872e-05 & 4.9233e-06 & 3.0755e-07 & 1.9227e-08 & 1.2073e-09 \\ 
            & Rate & - & 2.2588 & 2.0100 & 2.0004 & 1.9998 & 1.9966 \\ 
            \hline 
            \multirow{2}{*}{$\varepsilon_0/2^2$} & $e_{u,\varepsilon}^{\tau,h}$ & 3.1996e-03 & \textit{1.3368e-04} & 8.3419e-06 & 5.2241e-07 & 3.2260e-08 & 2.0468e-09 \\ 
            & Rate & - & 2.2905 & 2.0011 & 1.9986 & 1.9998 & 1.9980 \\ 
            \hline 
            \multirow{2}{*}{$\varepsilon_0/2^3$} & $e_{u,\varepsilon}^{\tau,h}$ & 2.0177e-03 & 6.2847e-05 & \textit{3.0683e-06} & 1.9109e-07 & 1.1943e-08 & 7.4906e-10 \\ 
            & Rate & - & 2.5024 & 2.1782 & 2.0025 & 2.0000 & 1.9975 \\   
            \hline 
            \multirow{2}{*}{$\varepsilon_0/2^4$} & $e_{u,\varepsilon}^{\tau,h}$ & 3.6244e-04 & 4.2723e-04 & 3.2745e-06 & \textit{1.7197e-06} & 1.0653e-08 & 6.6693e-10 \\ 
            & Rate & - & -0.1186 & 3.5138 & 2.1255 & 2.0064 & 1.9988 \\ 
            \hline 
            \multirow{2}{*}{$\varepsilon_0/2^5$} & $e_{u,\varepsilon}^{\tau,h}$ & 5.1385e-05 & 3.2956e-05 & 3.5790e-05 & 1.8177e-07 & \textit{9.4711e-09} & 5.8809e-10 \\ 
            & Rate & - & 0.3204 & -0.0595 & 3.8107 & 2.1312 & 2.0047 \\ 
            \bottomrule[2pt]
                   \end{tabular*}
        \label{tab2}
    \end{table}

     \begin{table}[H]
        \centering
        \caption{Temporal error analysis of $u$ solved by EAVF with different $\varepsilon$}
        \begin{tabular*}{0.9\textwidth}[h]{@{\extracolsep{\fill}}c c c c c c c c c}
            \toprule[2pt]
            & & $\tau_0 = 0.2$ & $\tau_0/2^2$ & $\tau_0/2^4$ & $\tau_0/2^6$ & $\tau_0/2^8$ & $\tau_0/2^{10}$ \\  
            \hline
             \multirow{2}{*}{$\varepsilon_0=1$} & $e_{u,\varepsilon}^{\tau,h}$ & 1.7665e-03 & 1.3889e-04 & 8.8326e-06 & 5.5250e-07 & 3.4549e-08 & 2.1756e-09 \\ 
            & Rate & - & 1.8339 & 1.9880 & 1.9994 & 1.9996 & 1.9946 \\ 
            \hline 
            \multirow{2}{*}{$\varepsilon_0/2$} & $e_{u,\varepsilon}^{\tau,h}$ & \textit{4.0100e-03} & 3.0137e-04 & 1.9077e-05 & 1.1931e-06 & 7.4581e-08 & 4.6669e-09 \\ 
            & Rate & - & 1.8670 & 1.9908 & 1.9995 & 1.9999 & 1.9991 \\ 
            \hline 
            \multirow{2}{*}{$\varepsilon_0/2^2$} & $e_{u,\varepsilon}^{\tau,h}$ & 3.3702e-03 & \textit{5.1512e-04} & 3.5039e-05 & 2.2016e-06 & 1.3765e-07 & 8.6088e-09 \\ 
            & Rate & - & 1.3549 & 1.9389 & 1.9962 & 1.9997 & 1.9995 \\ 
            \hline 
            \multirow{2}{*}{$\varepsilon_0/2^3$} & $e_{u,\varepsilon}^{\tau,h}$ & 1.9394e-03 & 5.6261e-05 & \textit{1.3535e-05} & 9.1784e-07 & 5.7659e-08 & 3.6047e-09 \\ 
            & Rate & - & 2.5538 & 1.0276 & 1.9411 & 1.9963 & 1.9992 \\   
            \hline 
            \multirow{2}{*}{$\varepsilon_0/2^4$} & $e_{u,\varepsilon}^{\tau,h}$ & 3.0642e-04 & 4.1921e-04 & 3.0830e-06 & \textit{7.6699e-07} & 5.1913e-08 & 3.2621e-09 \\ 
            & Rate & - & -0.2261 & 3.5436 & 1.0035 & 1.9425 & 1.9961 \\ 
            \hline 
            \multirow{2}{*}{$\varepsilon_0/2^5$} & $e_{u,\varepsilon}^{\tau,h}$ & 3.8684e-05 & 2.3840e-05 & 3.5147e-05 & 1.7375e-07 & \textit{4.4648e-08} & 3.0290e-09 \\ 
            & Rate & - & 0.3492 & -0.2800 & 3.8301 & 0.9802 & 1.9408 \\ 
            \bottomrule[2pt]
       \end{tabular*}
        \label{tab3}
    \end{table}

    \begin{table}[H]
        \centering
        \caption{Temporal error analysis of $u$ solved by PAVF-C with different $\varepsilon$}
        \begin{tabular*}{0.9\textwidth}[h]{@{\extracolsep{\fill}}c c c c c c c c c}
            \toprule[2pt]
            & & $\tau_0 = 0.2$ & $\tau_0/2^3$ & $\tau_0/2^6$ & $\tau_0/2^9$ & $\tau_0/2^{12}$ \\  
            \hline
             \multirow{2}{*}{$\varepsilon_0=1$} & $e_{u,\varepsilon}^{\tau,h}$ & 1.3461e-03 & 2.3404e-05 & 2.6573e-07 & 5.6143e-09 & 4.2426e-08  \\ 
            & Rate & - & 1.9486 & 2.0000 & 2.0085 & -0.9726    \\ 
            \hline 
            \multirow{2}{*}{$\varepsilon_0/2$} & $e_{u,\varepsilon}^{\tau,h}$ & \textit{2.4245e-02} & 3.9034e-04 & 6.1029e-06 & 9.5350e-08 & 5.9386e-09   \\ 
            & Rate & - & 1.9856 & 2.0000 & 2.0000 & 1.3335    \\ 
            \hline 
            \multirow{2}{*}{$\varepsilon_0/2^2$} & $e_{u,\varepsilon}^{\tau,h}$ & 1.1853e-00 & \textit{1.9790e-02} & 3.0726e-04 & 4.8044e-06 & 7.6579e-08 \\ 
            & Rate & - & 1.9681 & 2.0031 & 2.0001 & 1.9900  \\ 
            \hline 
            \multirow{2}{*}{$\varepsilon_0/2^3$} & $e_{u,\varepsilon}^{\tau,h}$ & 3.1225e-01 & 9.0667e-01 & \textit{2.1088e-02} & 3.3911e-04 & 5.3010e-06  \\ 
            & Rate & - & -0.5126 & 1.8087 & 1.9862 & 1.9998  \\ 
            \hline 
            \multirow{2}{*}{$\varepsilon_0/2^4$} & $e_{u,\varepsilon}^{\tau,h}$ & 4.4645e-01 & 2.3475e-01 & 7.7685e-01 & \textit{2.7607e-02} & 4.3704e-04   \\ 
            & Rate & - & 0.3091 & -0.5755 & 1.6048 & 1.9937  \\ 
            \hline 
            \multirow{2}{*}{$\varepsilon_0/2^5$} & $e_{u,\varepsilon}^{\tau,h}$ & 2.9648e-01 & 2.2446e-01 & 9.7850e-01 & 8.8872e-01 & 2.1330e-02  \\ 
            & Rate & - & 0.1338 & -0.7080 & 0.0463 & 1.7936  \\ 
            \bottomrule[2pt] 
        \end{tabular*}
        \label{tab4}
    \end{table}

        \begin{table}[H]
        \centering
        \caption{Temporal error analysis of $u$ solved by AVF with different $\varepsilon$}
       \begin{tabular*}{0.9\textwidth}[h]{@{\extracolsep{\fill}}c c c c c c c c c}
            \toprule[2pt]
            & & $\tau_0 = 0.2$ & $\tau_0/2^3$ & $\tau_0/2^6$ & $\tau_0/2^9$ & $\tau_0/2^{12}$ \\  
            \hline
             \multirow{2}{*}{$\varepsilon_0=1$} & $e_{u,\varepsilon}^{\tau,h}$ & 5.8230e-03 & 9.0708e-05 & 1.4174e-06 & 2.2093e-08 & 8.9002e-09  \\ 
            & Rate & - & 2.0015 & 2.0000 & 2.0017 & 0.4372    \\ 
            \hline 
            \multirow{2}{*}{$\varepsilon_0/2$} & $e_{u,\varepsilon}^{\tau,h}$ & \textit{8.4507e-02} & 1.5600e-03 & 2.4437e-05 & 3.8184e-07 & 6.5865e-09   \\ 
            & Rate & - & 1.9198& 1.9988 & 2.0000 & 1.9524    \\ 
            \hline 
            \multirow{2}{*}{$\varepsilon_0/2^2$} & $e_{u,\varepsilon}^{\tau,h}$ & 2.2123e-01 & \textit{7.9914e-02} & 1.2296e-03 & 1.9205e-05 & 3.0022e-07 \\ 
            & Rate & - & 0.4897 & 2.0074 & 2.0002 & 1.9998  \\ 
            \hline 
            \multirow{2}{*}{$\varepsilon_0/2^3$} & $e_{u,\varepsilon}^{\tau,h}$ & 2.7941e-01 & 1.1040e-00 & \textit{7.6467e-02} & 1.3546e-03 & 2.1203e-05  \\ 
            & Rate & - & -0.6608 & 1.2839 & 1.9396 & 1.9992  \\ 
            \hline 
            \multirow{2}{*}{$\varepsilon_0/2^4$} & $e_{u,\varepsilon}^{\tau,h}$ & 3.4113e-01 & 1.1037e-00 & 4.2018e-02 & \textit{1.5037e-01} & 1.7472e-03   \\ 
            & Rate & - & -0.5647 & 1.5717 & -0.4421 & 1.9714  \\ 
            \hline 
            \multirow{2}{*}{$\varepsilon_0/2^5$} & $e_{u,\varepsilon}^{\tau,h}$ & 9.1835e-01 & 1.8162e-01 & 3.1667e-01 & 3.6397e-01 & 8.8520e-02  \\ 
            & Rate & - & 0.7794 & -0.2673 & -0.0670 & 0.6779  \\ 
            \bottomrule[2pt]
        \end{tabular*}
        \label{tab5}
    \end{table}

\begin{figure}[H]
	\centering
	\begin{minipage}[t]{0.24\textwidth}
		\includegraphics[width=1\linewidth]{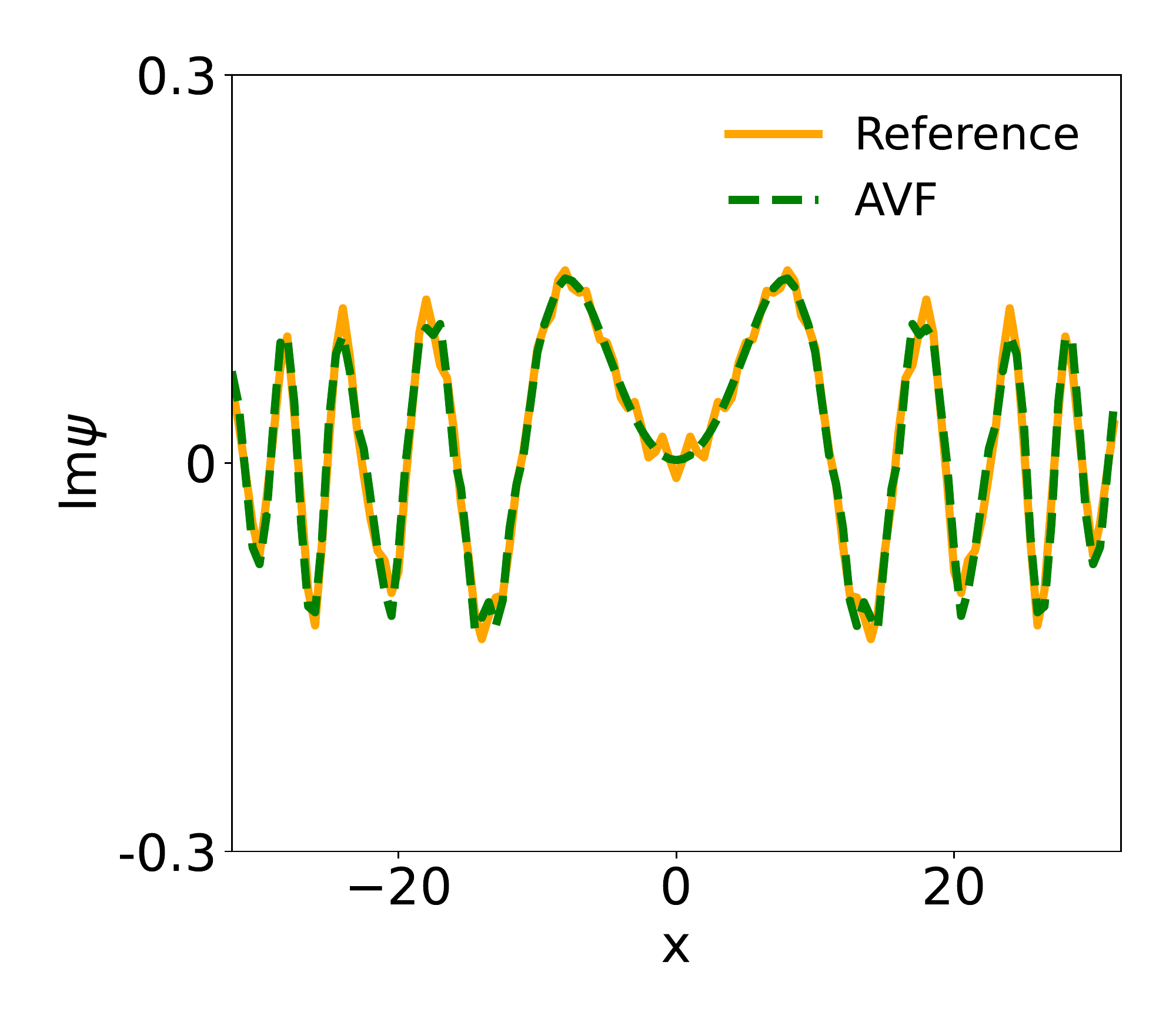}
	\end{minipage}
	\begin{minipage}[t]{0.24\textwidth}
		\includegraphics[width=1\linewidth]{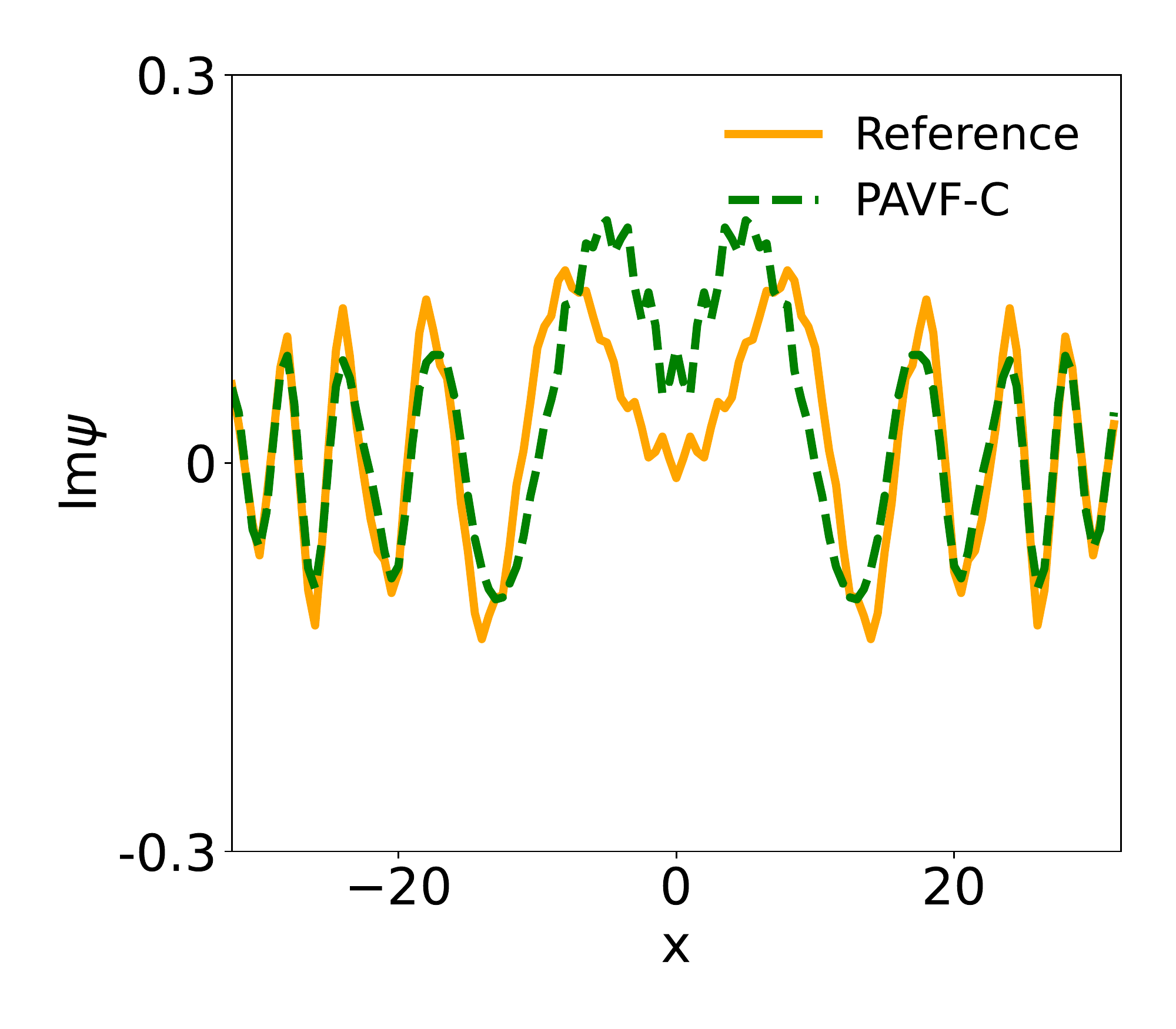}
	\end{minipage}
	\begin{minipage}[t]{0.24\textwidth}
		\includegraphics[width=1\linewidth]{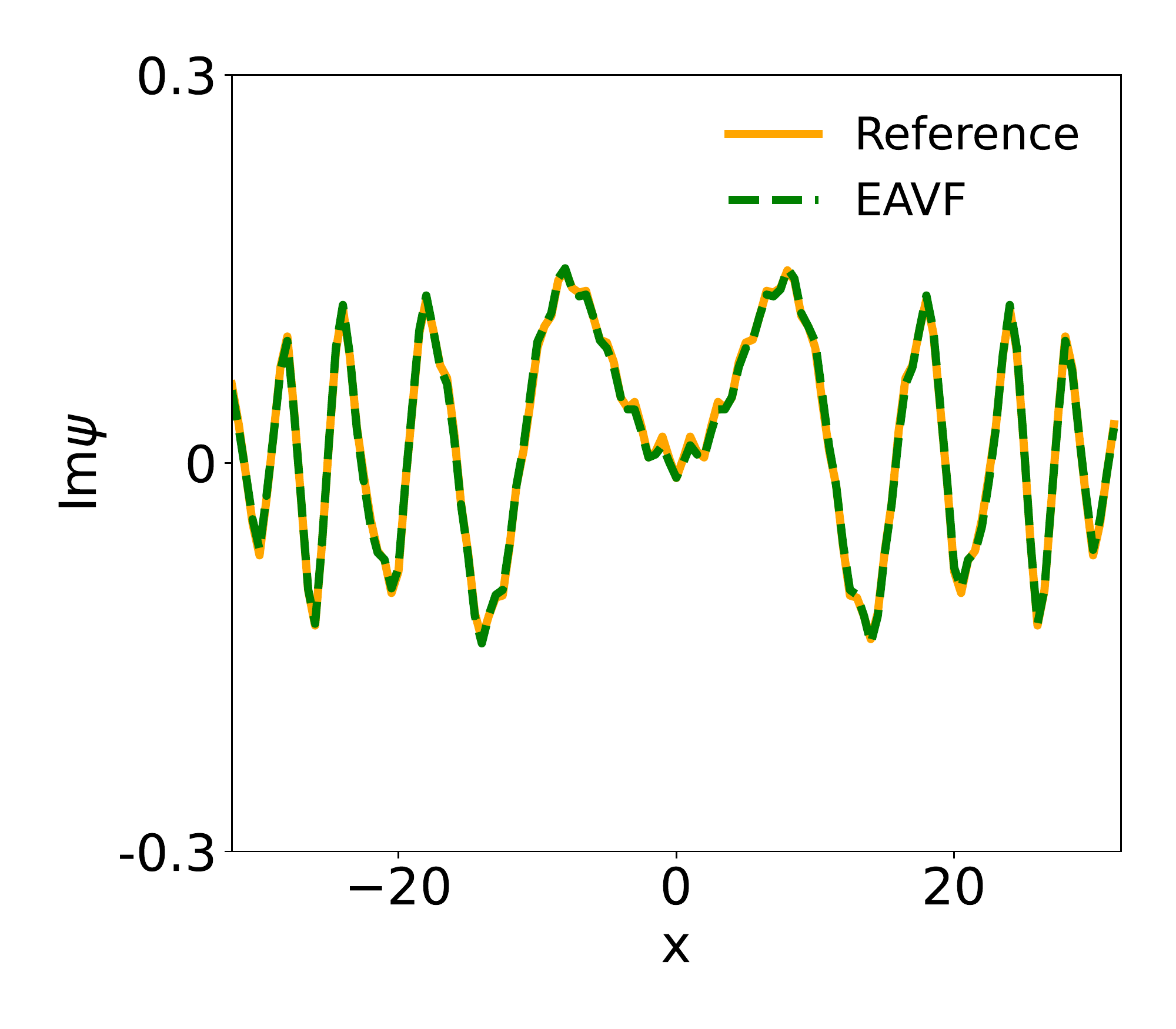}
	\end{minipage}
	\begin{minipage}[t]{0.24\textwidth}
		\includegraphics[width=1\linewidth]{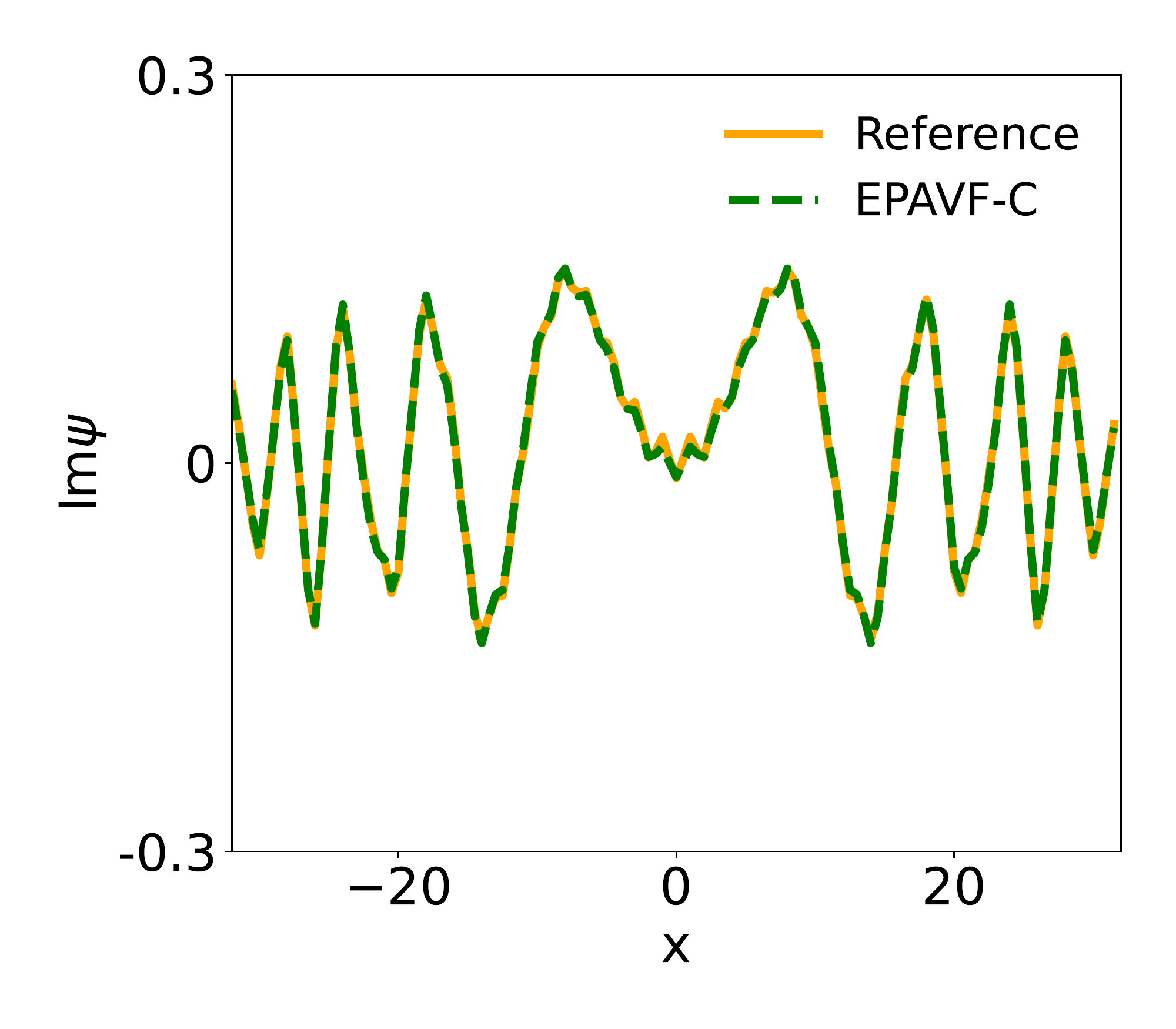}
	\end{minipage}
	\begin{minipage}[t]{0.24\textwidth}
		\includegraphics[width=1\linewidth]{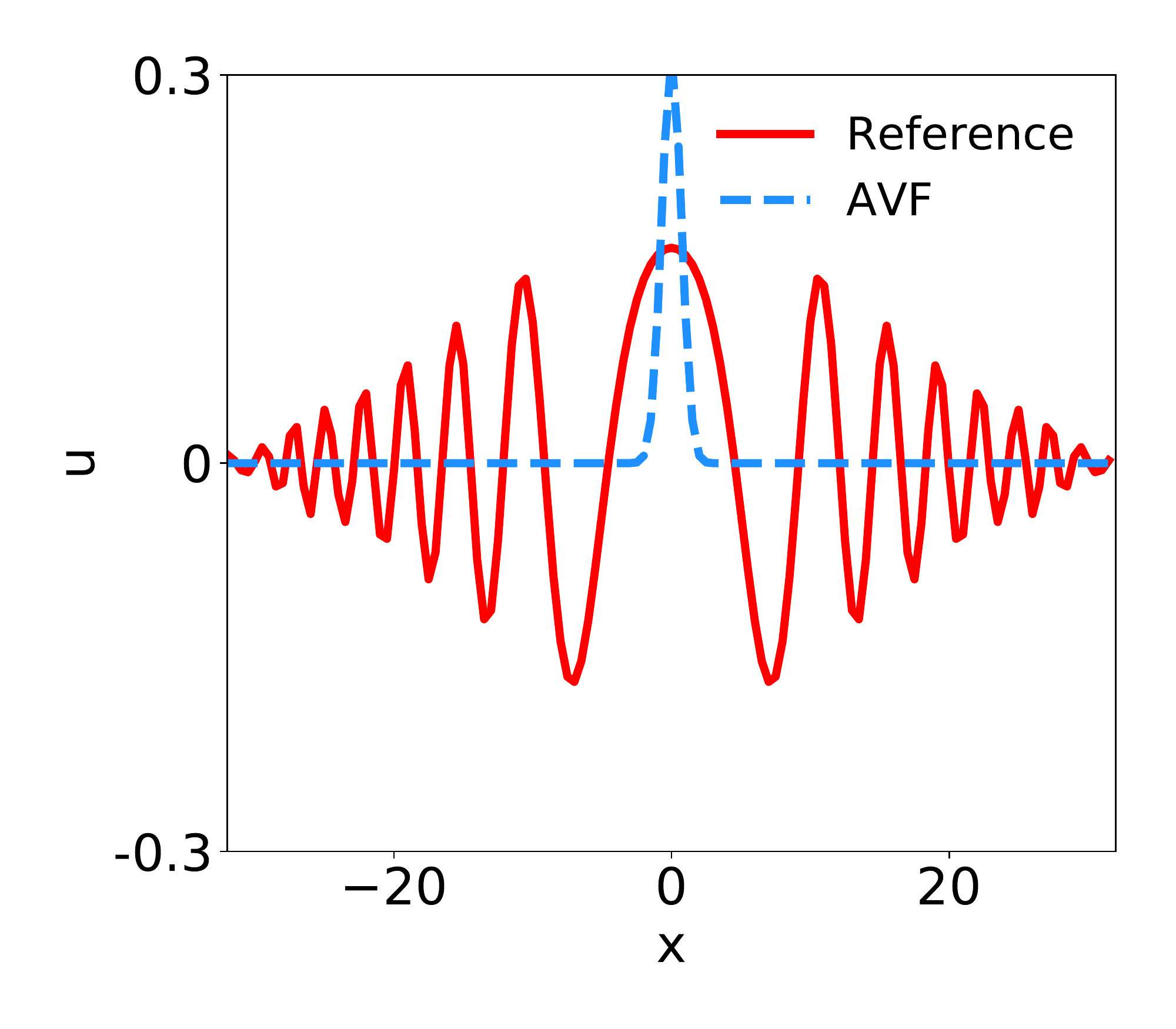}
	\end{minipage}
	\begin{minipage}[t]{0.24\textwidth}
		\includegraphics[width=1\linewidth]{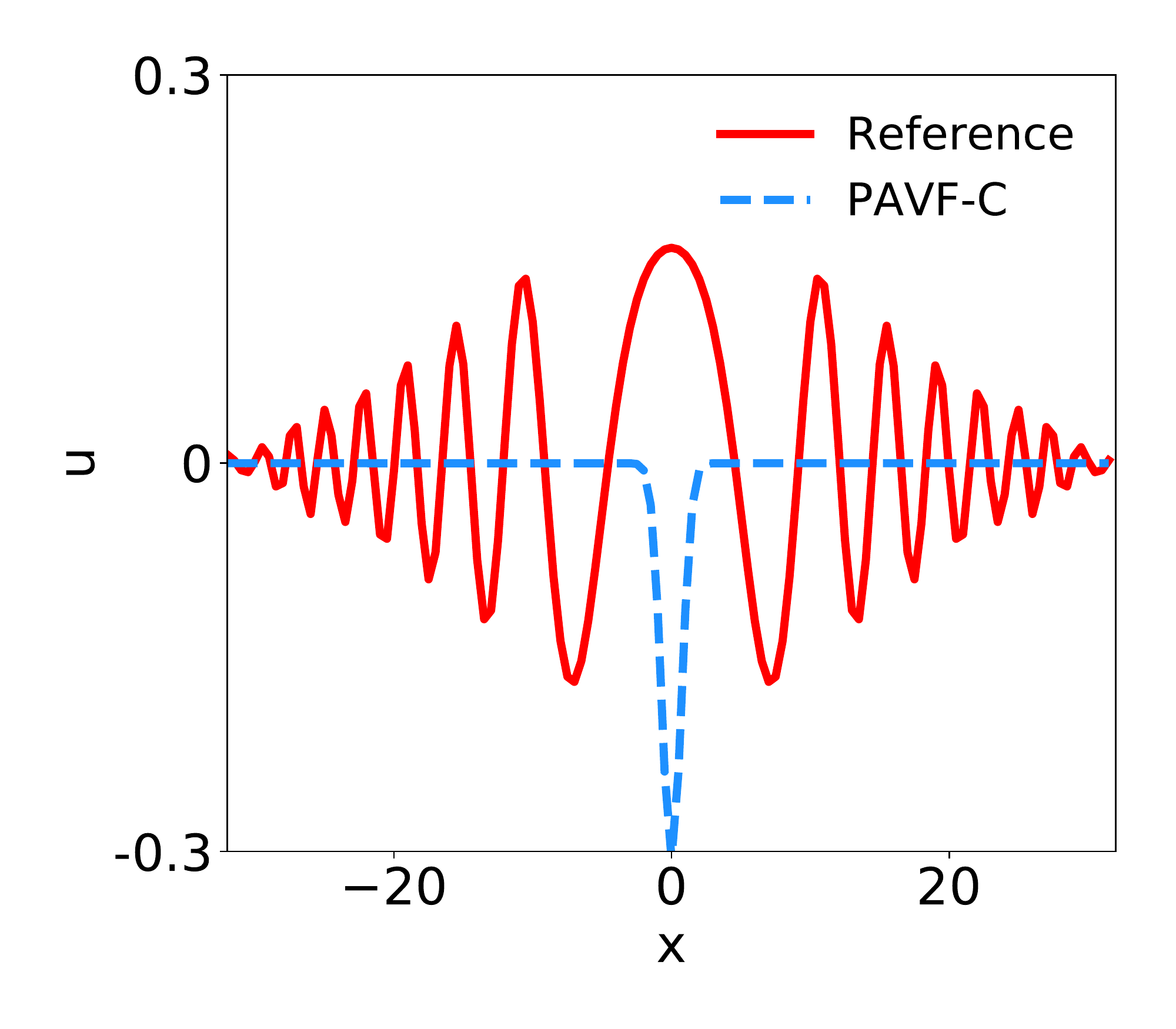}
	\end{minipage}
	\begin{minipage}[t]{0.24\textwidth}
		\includegraphics[width=1\linewidth]{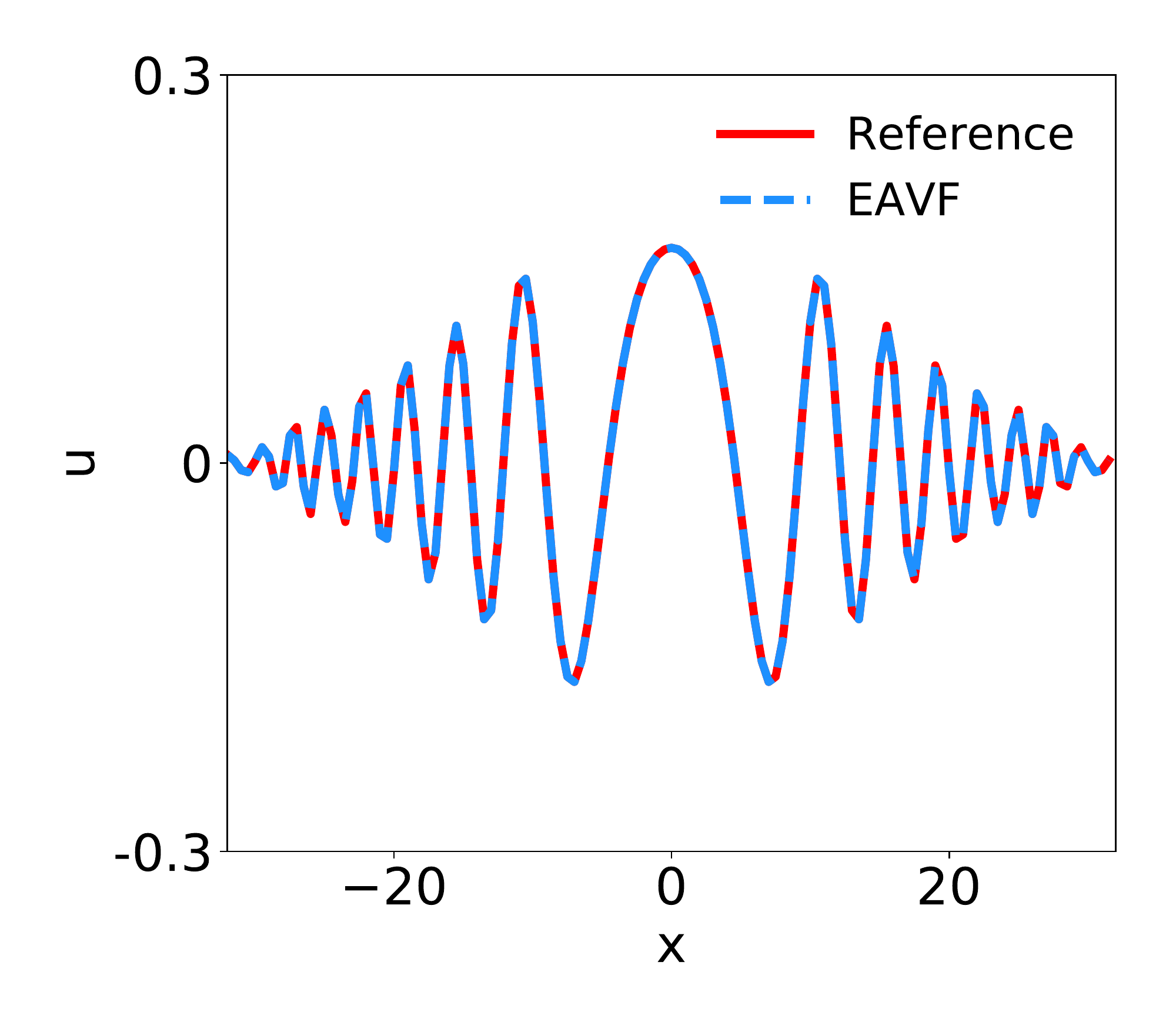}
	\end{minipage}
	\begin{minipage}[t]{0.24\textwidth}
	\includegraphics[width=1\linewidth]{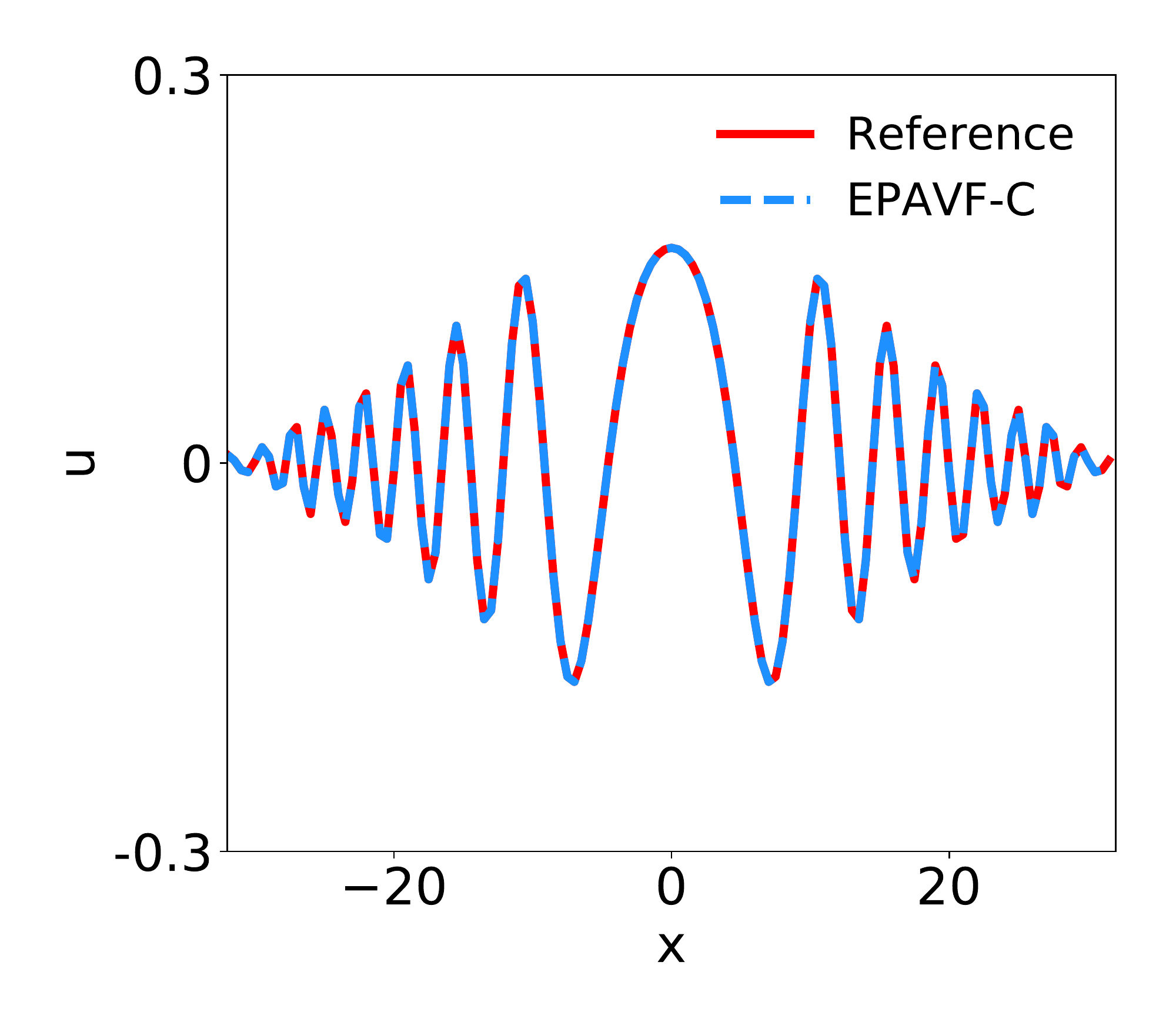}
	\end{minipage}
	\caption{Numerical solutions of the KGS equations by different schemes with $h=1/2$ and $\tau=0.1$ at $t= 10$.}
	\label{SOLUTIONS T=10}
\end{figure}

Besides the accuracy, we also compare the efficiency of the four schemes. From Figure~\ref{CPU Error of solutions}, we find that the computational efficiency of the exponential integrators is also higher than the nonexponential ones. Specifically, EPAVF-C is the most efficient method among them, while AVF is the least efficient, regarding $\psi$ and $u$. For EAVF and PAVF-C, however, the performance of their efficiency depends on the variables.  That is,  we can easily tell the difference between EAVF and PAVF-C for $\psi$, but this difference becomes subtle instead for $u$.
\begin{figure}[H]
	\centering
	\begin{minipage}[t]{0.45\textwidth}
		\centering
		\includegraphics[width=1\linewidth]{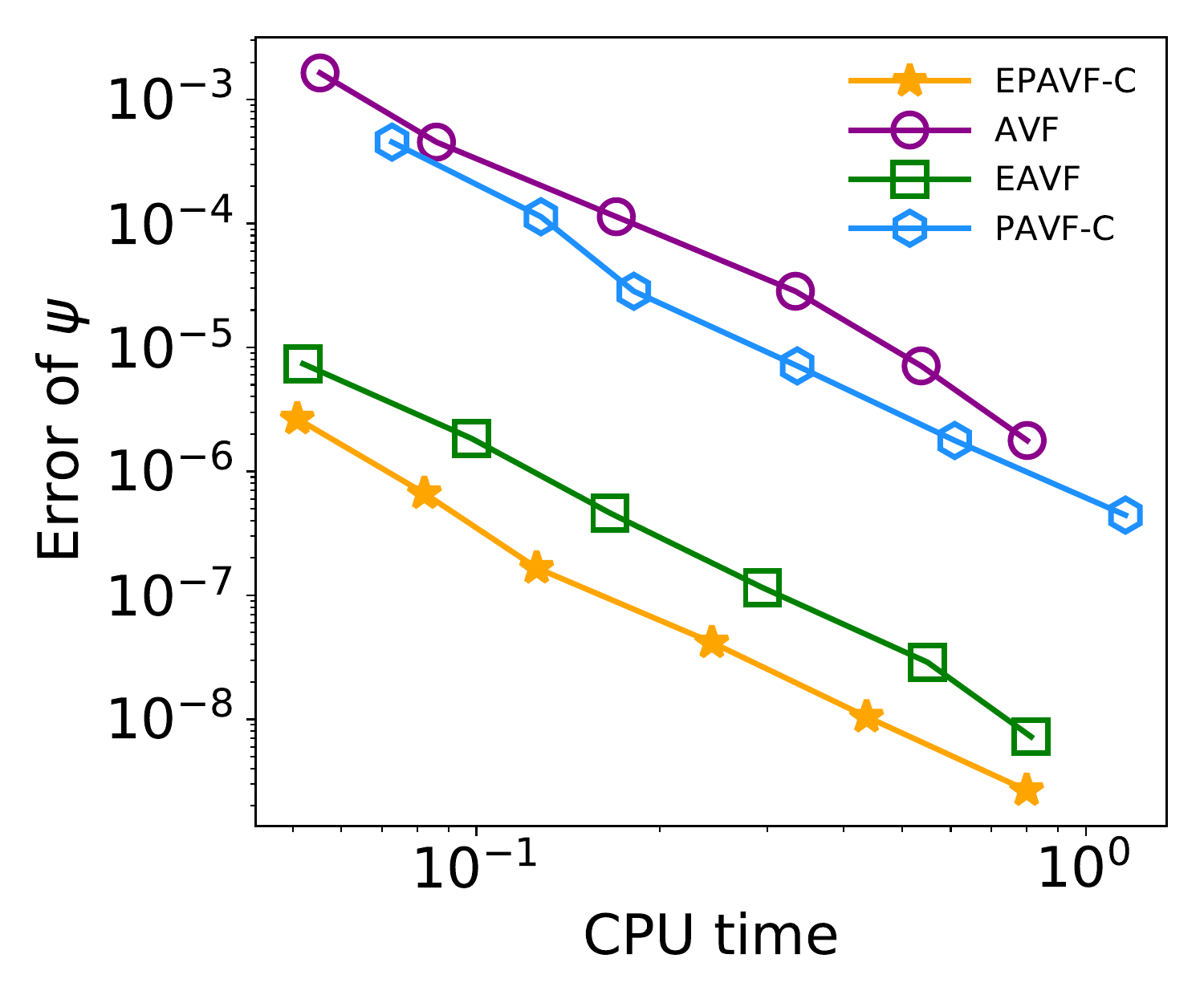}
	\end{minipage}
	\begin{minipage}[t]{0.45\textwidth}
		\centering
		\includegraphics[width=1\linewidth]{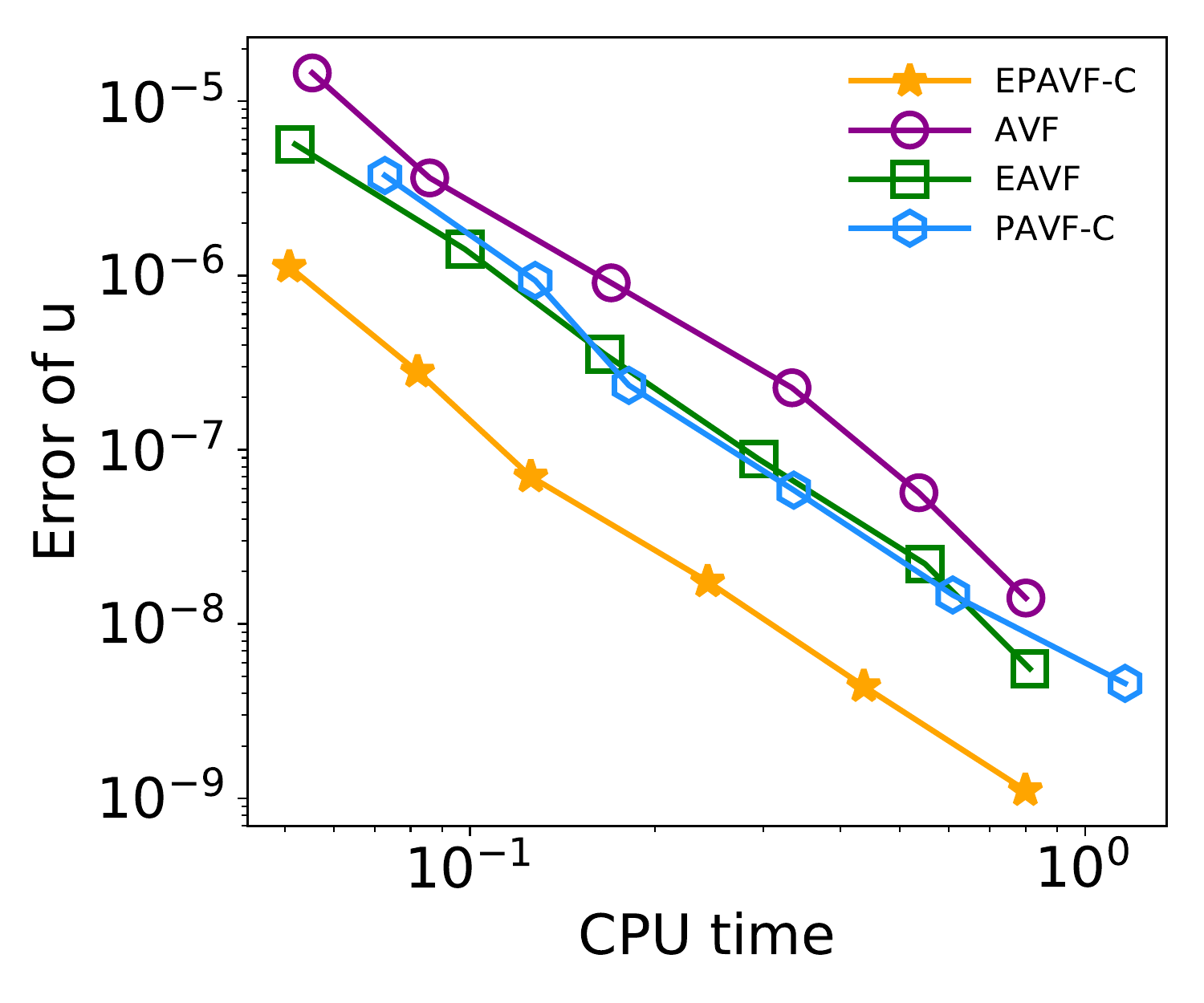}
	\end{minipage}
	\caption{Computational efficiency of different schemes for the KGS equations with respect to $\psi$ (left) and $u$ (right) under the time step $\tau = 0.025 \times 2^{-k}, k = 0,\cdots,5$ and $\varepsilon=1$.}
	\label{CPU Error of solutions}
\end{figure}

Next, we discuss the ordering problem mentioned in Remark.~\ref{rmk1}. Notice that EPAVF \eqref{scheme-kgs} and its adjoint \eqref{adjoint-kgs} can be viewed as two kinds of orderings. In Figure~\ref{rate kgs epavf, adjoint}, we present the temporal errors of different $\varepsilon$ for both $\varphi$ and $u$. When $\varepsilon$ is considerably large, the performance of these two schemes is merely the same. However, the difference emerges as $\varepsilon$ becomes smaller. EPAVF \eqref{scheme-kgs}  shows smaller errors and a faster convergence rate than the adjoint scheme. In view of schemes \eqref{scheme-kgs} and \eqref{adjoint-kgs}, we may conclude that updating the non-oscillatory parts first and then computing the highly oscillatory parts may lead to smaller errors.
\begin{figure}[H]
	\centering
	\begin{minipage}[t]{0.45\textwidth}
		\includegraphics[width=1\linewidth]{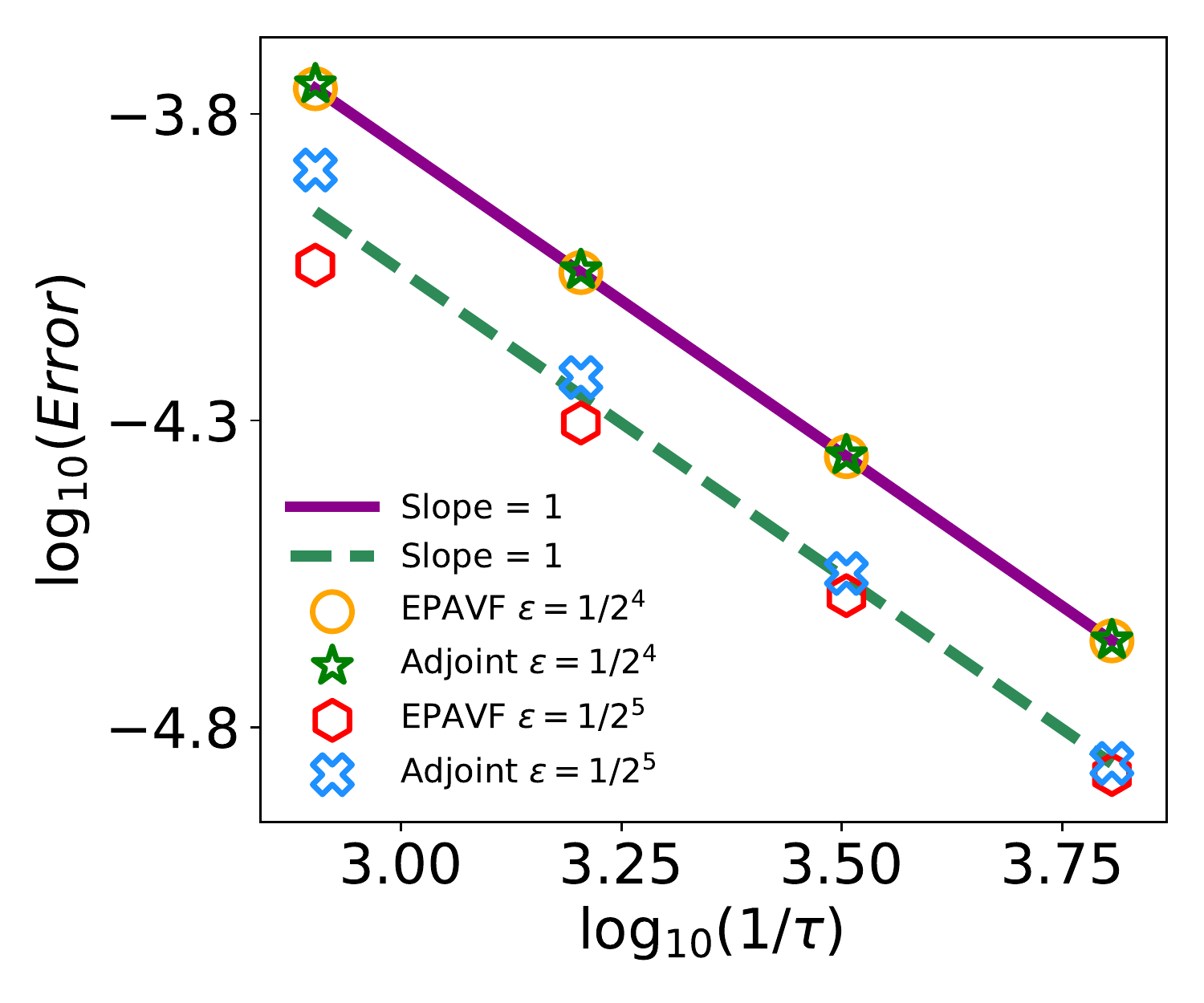}
	\end{minipage}
	\begin{minipage}[t]{0.45\textwidth}
		\includegraphics[width=1\linewidth]{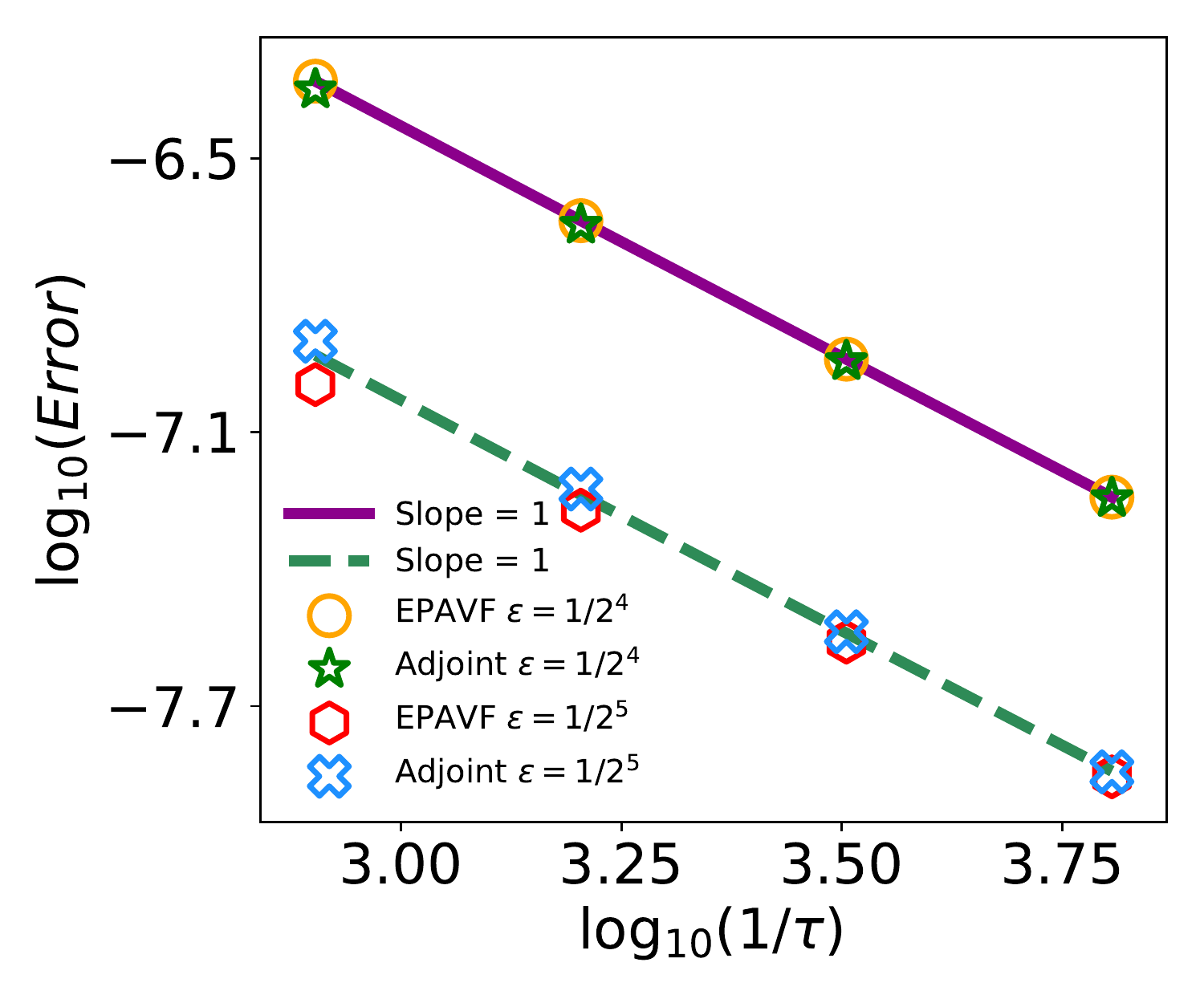}
	\end{minipage}
	\caption{Temporal error of EPAVF and its adjoint for the  KGS equations with respect to $\psi$ (left) and $u$ (right) under the time step $\tau = 0.00125 \times 2^{-k}, k =0 , \cdots, 3$.}
	\label{rate kgs epavf, adjoint}
\end{figure}

In the last experiment of this example, we present the result of the evolution of Hamiltonian energy by EPAVF-C. The EPAVF scheme and its adjoint exhibit similar behavior, and we omit here. We use the relative energy error \begin{equation}
	RH^n = \big|(H^n - H^0)/H^0\big|,
\end{equation}
to measure the conservative properties in the rest experiments.
From Figure~\ref{conservation kgs1d}, we can observe that EPAVF-C preserves the discrete energy to machine accuracy, independent of the decreasing of $\varepsilon$. Since the implementation of EPAVF-C involves the fixed point iteration, one can see a linear growth in the energy errors, although the scheme is exactly energy conservation. How to avoid the accumulation of round-off errors caused by iteration needs further studies.

\begin{figure}[H]
	\centering
	\includegraphics[width = 0.9\textwidth]{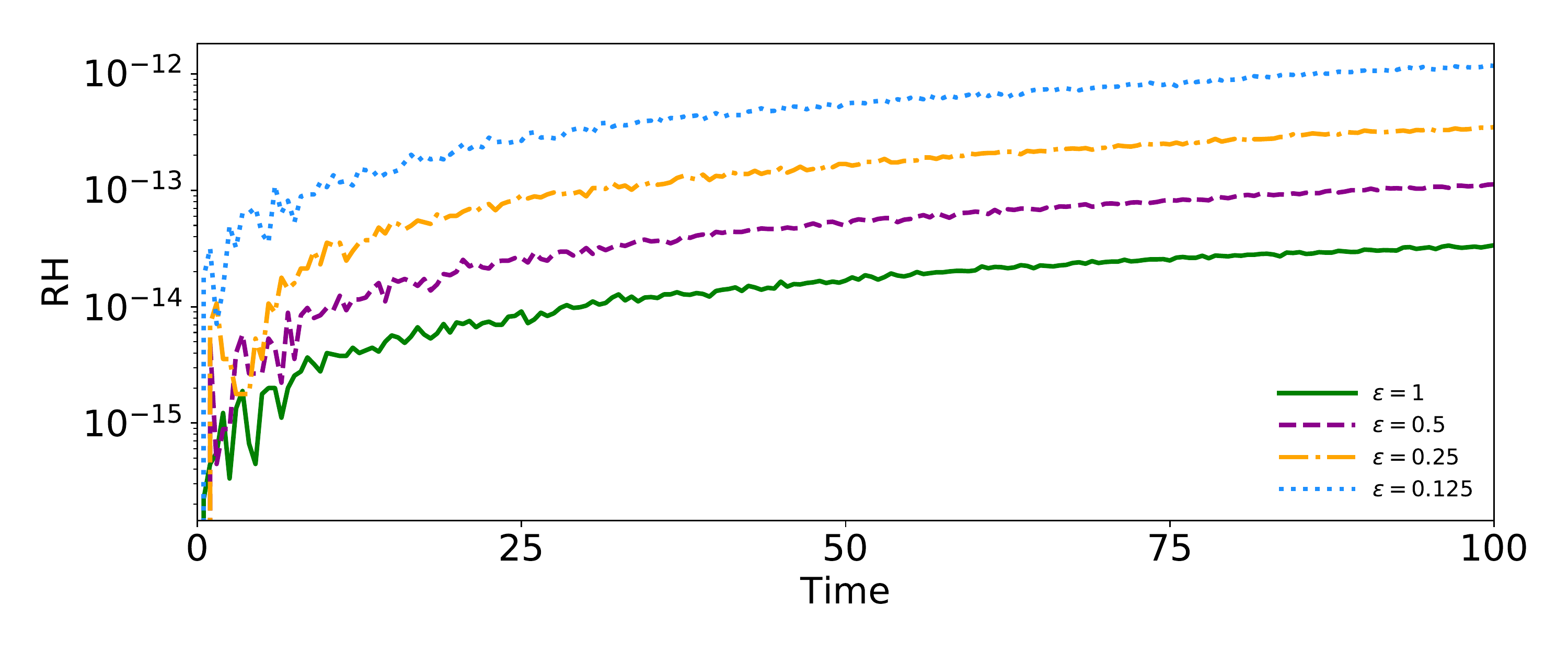}
	\caption{Energy error of 1D KGS equations solved by EPAVF-C with different $\varepsilon$.}
	\label{conservation kgs1d}
\end{figure}

\begin{ex}
	In this experiment, we consider the 2D KGS equations with the initial conditions  
	\begin{equation*}
		\begin{split}
			&\psi_{0}\left(x, y\right) = \frac{2}{\exp{\left( x^{2} + 2y^{2} \right)}+\exp{\left(- x^{2} - 2y^{2} \right)}}\exp{\left(5i \cdot\mbox{\rm sech}{\left( \sqrt{4x^2 + y^2} \right)}\right)}, \\ 
			& u_{0}\left(x, y\right) = \exp{\left(-x^{2} - y^{2}\right)}, \ u_{1}\left(x, y\right) = \dfrac{\exp{\left(-x^{2} - y^{2}\right)}}{2},
		\end{split}
	\end{equation*}
and periodic boundary conditions. This problem will be solved on the spatial domain $\Omega = \left[-64, 64\right] \times\left[-64, 64\right]$ until $t= 10$. The parameter $\beta$ is fixed to $1$, and numerical solutions of the KGS equations with $\varepsilon =1, 0.1, 0.01$ will be simulated by EPAVF-C under spatial step $h_x = h_y = 1/4$ and time step $\tau = 0.1$.
\end{ex}

Figures~\ref{Kgs 2D phih}-\ref{Kgs 2D uh} show the snapshots of numerical solutions $|\psi|^2$ and $u$ by EPAVF-C with different $\varepsilon$ at $t = 1, 2, 4, 6$, respectively. As the value of $\varepsilon$ decreasing, highly oscillatory waves emerges, especially from the snapshots of $u$. Nevertheless, the EPAVF-C scheme can well capture the wave dynamics and iterations. Moreover, the corresponding energy is well-preserved even for very small $\varepsilon$ in Figure~\ref{energy 2dkgs}.

\begin{figure}[H]
	\centering
	\begin{minipage}{0.23\textwidth}
		\includegraphics[width=1.0\linewidth]{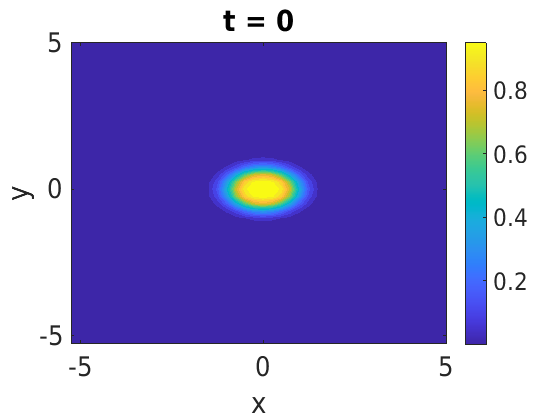}
	\end{minipage}
	\begin{minipage}{0.23\textwidth}
		\includegraphics[width=1.0\linewidth]{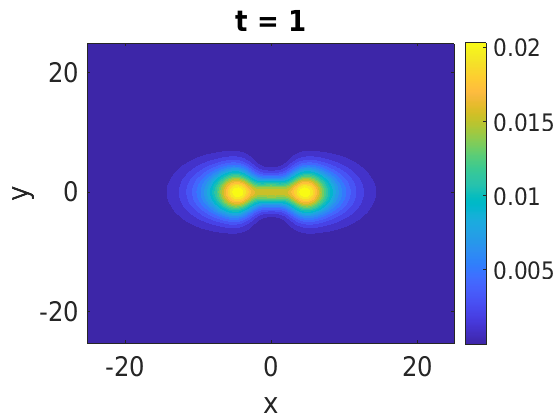}
	\end{minipage}
	\begin{minipage}{0.23\textwidth}
		\includegraphics[width=1.0\linewidth]{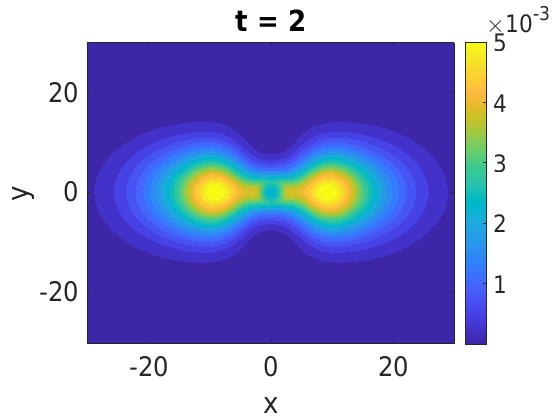}
	\end{minipage}
	\begin{minipage}{0.23\textwidth}
		\includegraphics[width=1.0\linewidth]{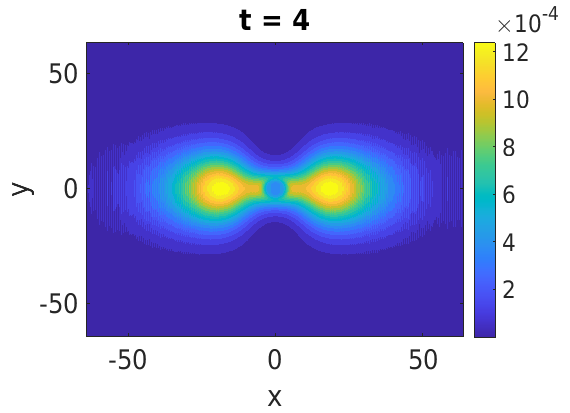}
	\end{minipage}
		\begin{minipage}{0.23\textwidth}
		\includegraphics[width=1.0\linewidth]{kgs2d_phih_t0.png}
	\end{minipage}
	\begin{minipage}{0.23\textwidth}
		\includegraphics[width=1.0\linewidth]{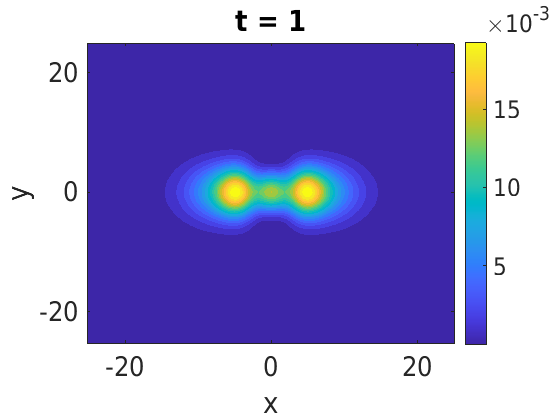}
	\end{minipage}
	\begin{minipage}{0.23\textwidth}
		\includegraphics[width=1.0\linewidth]{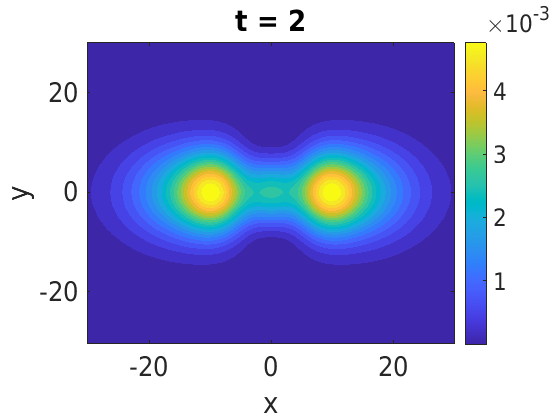}
	\end{minipage}
	\begin{minipage}{0.23\textwidth}
		\includegraphics[width=1.0\linewidth]{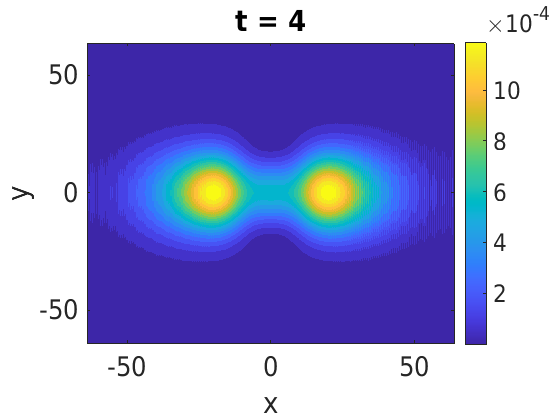}
	\end{minipage}
		\begin{minipage}{0.23\textwidth}
		\includegraphics[width=1.0\linewidth]{kgs2d_phih_t0.png}
	\end{minipage}
	\begin{minipage}{0.23\textwidth}
		\includegraphics[width=1.0\linewidth]{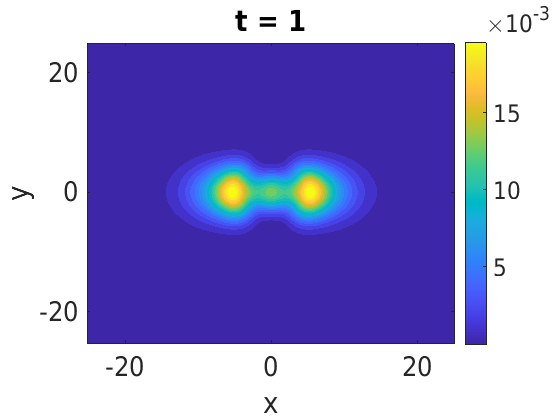}
	\end{minipage}
	\begin{minipage}{0.23\textwidth}
		\includegraphics[width=1.0\linewidth]{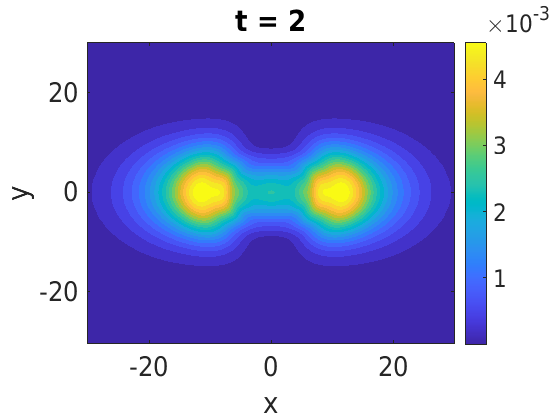}
	\end{minipage}
	\begin{minipage}{0.23\textwidth}
		\includegraphics[width=1.0\linewidth]{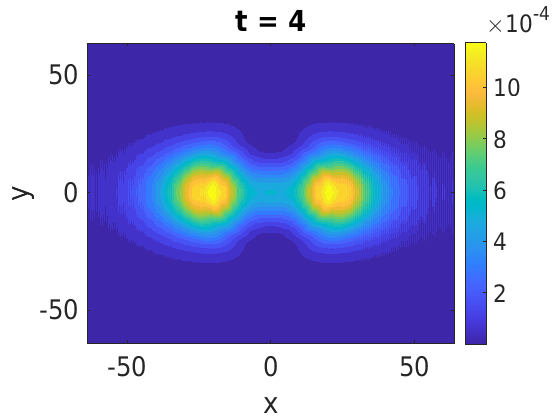}
	\end{minipage}

	\caption{Snapshots of $\psi$ for the 2D KGS equations by EPAVF--C with $\varepsilon = 1$ (first row); $\varepsilon = 0.1$ (middle row); $\varepsilon = 0.01$ (last row).}
	\label{Kgs 2D phih}
\end{figure}

\begin{figure}[H]
	\centering
	\begin{minipage}{0.23\textwidth}
		\includegraphics[width=1.0\linewidth]{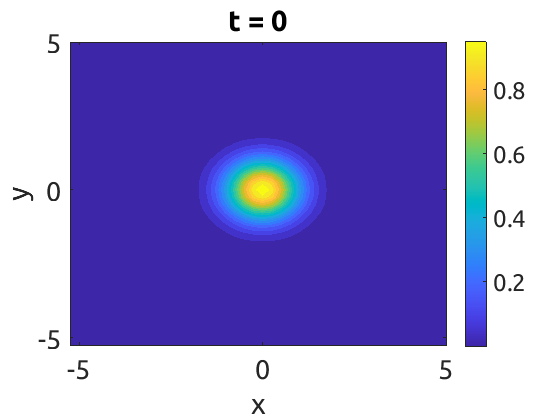}
	\end{minipage}
	\begin{minipage}{0.23\textwidth}
		\includegraphics[width=1.0\linewidth]{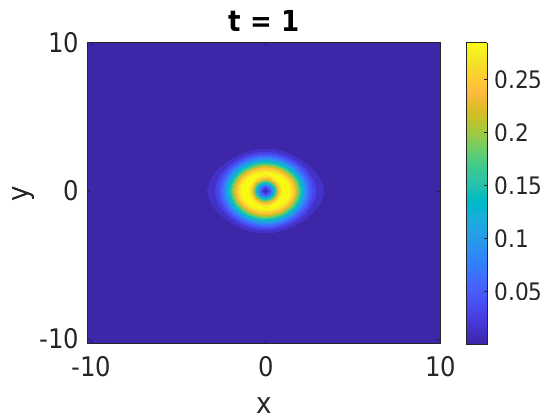}
	\end{minipage}
	\begin{minipage}{0.23\textwidth}
		\includegraphics[width=1.0\linewidth]{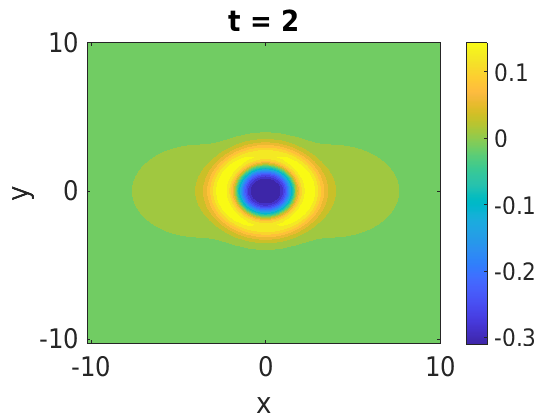}
	\end{minipage}
	\begin{minipage}{0.23\textwidth}
		\includegraphics[width=1.0\linewidth]{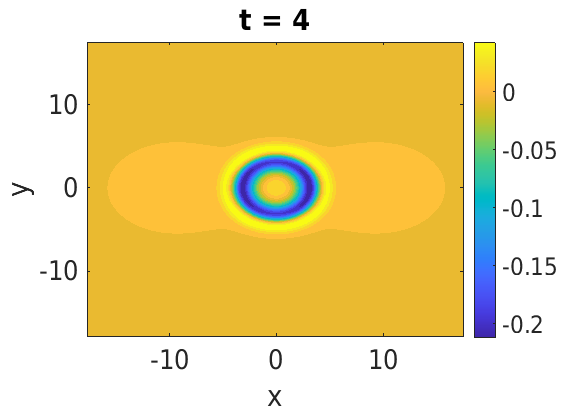}
	\end{minipage}
	\begin{minipage}{0.23\textwidth}
		\includegraphics[width=1.0\linewidth]{kgs2d_uh_t0.png}
	\end{minipage}
	\begin{minipage}{0.23\textwidth}
		\includegraphics[width=1.0\linewidth]{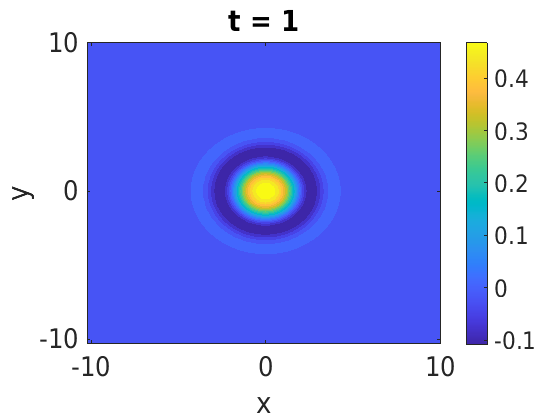}
	\end{minipage}
	\begin{minipage}{0.23\textwidth}
		\includegraphics[width=1.0\linewidth]{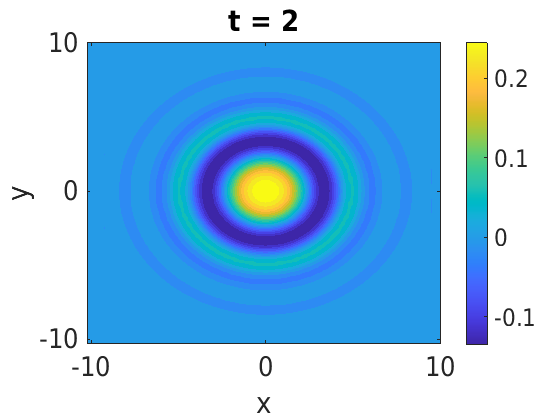}
	\end{minipage}
	\begin{minipage}{0.23\textwidth}
		\includegraphics[width=1.0\linewidth]{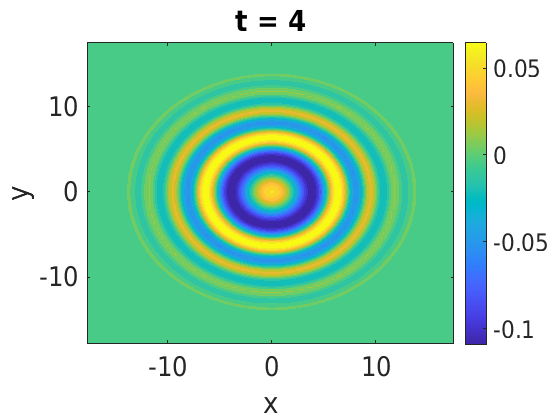}
	\end{minipage}
	\begin{minipage}{0.23\textwidth}
		\includegraphics[width=1.0\linewidth]{kgs2d_uh_t0.png}
	\end{minipage}
	\begin{minipage}{0.23\textwidth}
		\includegraphics[width=1.0\linewidth]{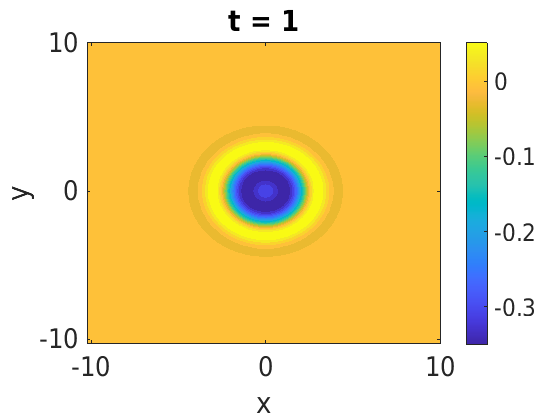}
	\end{minipage}
	\begin{minipage}{0.23\textwidth}
		\includegraphics[width=1.0\linewidth]{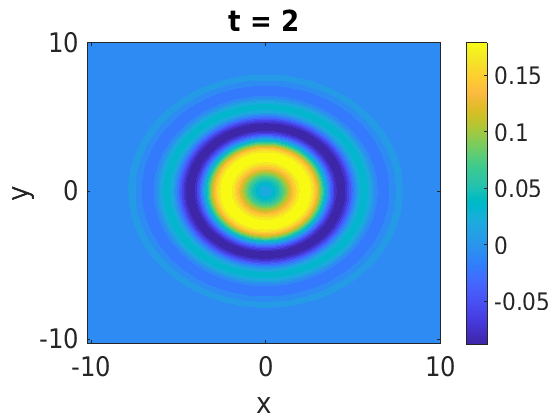}
	\end{minipage}
	\begin{minipage}{0.23\textwidth}
		\includegraphics[width=1.0\linewidth]{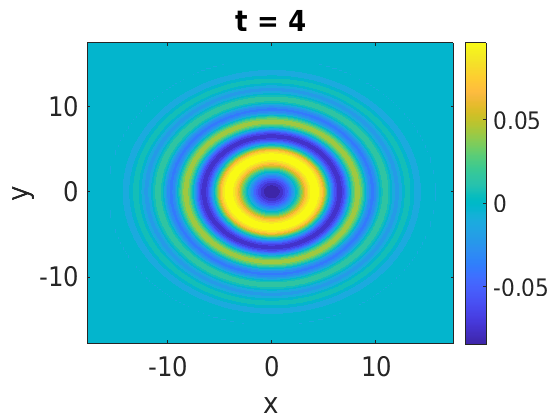}
	\end{minipage}
	\caption{Snapshots of $u$ for the 2D KGS equations by EPAVF-C with $\varepsilon = 1$ (first row); $\varepsilon = 0.1$ (middle row); $\varepsilon = 0.01$ (last row).}
	\label{Kgs 2D uh}
\end{figure}
\begin{figure}[H]
	\centering
		\includegraphics[width=0.9\textwidth]{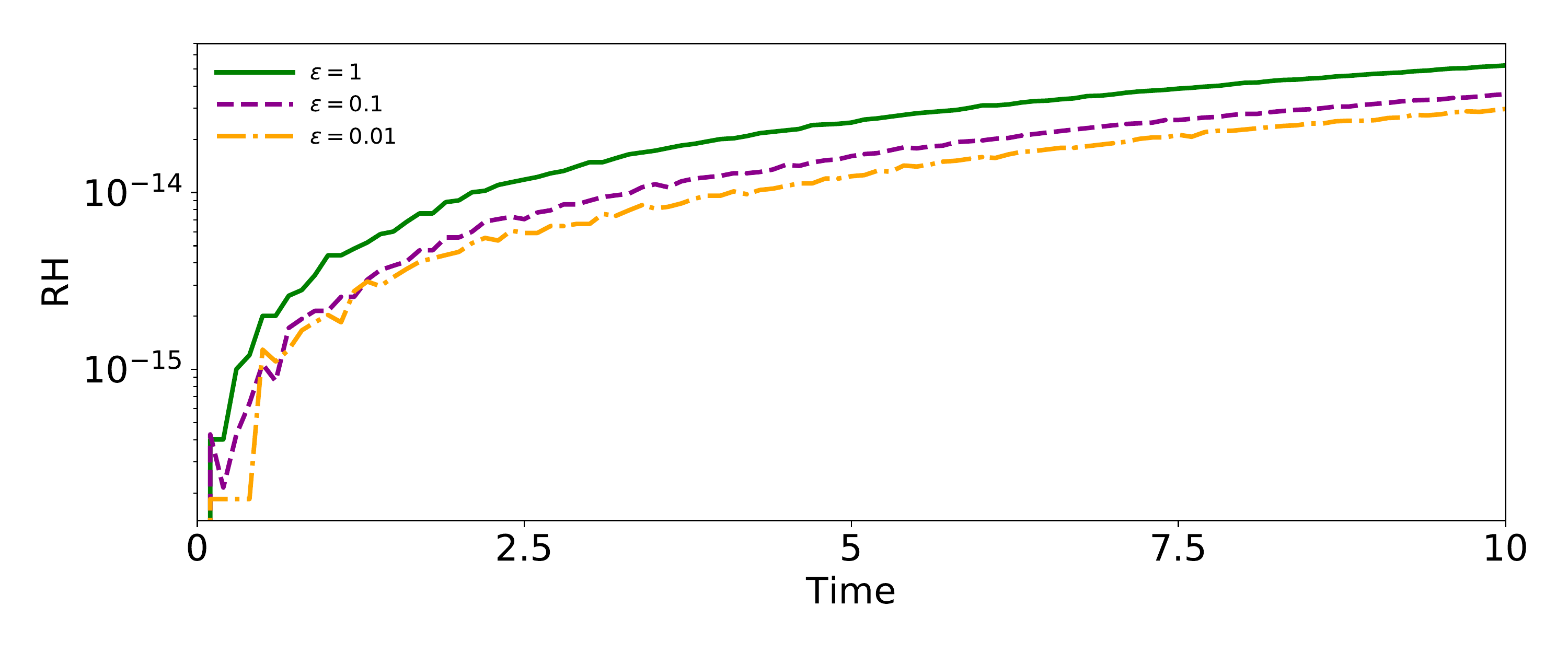}
		\caption{Energy error of the 2D KGS equations by EPAVF-C with different $\varepsilon$. }
		\label{energy 2dkgs}
\end{figure}

\subsection{Klein-Gordon-Zakharov equations}
Next, we consider the following coupled KGZ equations  
\begin{equation}\label{KGZ origin}
	\begin{cases}
		\varepsilon^{2}E_{tt}(\mathbf{x}, t) - \Delta E(\mathbf{x}, t) + \dfrac{1}{\varepsilon^{2}}E(\mathbf{x}, t) + E(\mathbf{x}, t)M(\mathbf{x}, t) = 0, \\ 
		M_{tt}(\mathbf{x}, t) - \Delta M(\mathbf{x}, t) - \Delta\left(E(\mathbf{x}, t)\right)^{2} = 0, 
	\end{cases}
\end{equation}
equipped with the following initial-boundary conditions
\begin{equation*}
	\begin{cases}
		E(\mathbf{x}, 0) = E_{0}(\mathbf{x}), \ E_t(\mathbf{x}, 0) = \dfrac{1}{\varepsilon^{2}}E_{1}(\mathbf{x}), \\
		M(\mathbf{x}, 0) = M_{0}(\mathbf{x}), \ M_t(\mathbf{x}, 0) = M_{1}(\mathbf{x}), \\ 
		\left. M(\mathbf{x}, t)\right|_{\partial \Omega} = \left. E(\mathbf{x}, t)\right|_{\partial \Omega} = 0,
	\end{cases}
\end{equation*}
where $E\left(\textbf{x}, t\right), M\left(\textbf{x}, t\right)$ are real-valued functions representing the scale component of the electric field raised by electrons and the derivation of ion density from its equilibrium, respectively, $\varepsilon$ is a dimensionless parameter inversely proportional to the plasma frequency \cite{bao2016}. The KGZ equations (\ref{KGZ origin}) is a classical model for describing the mutual interaction between the Langmuir waves and ion acoustic waves in a plasma. By introducing intermediate variables $F=E_t$, $N_t=M+E^2$, the system (\ref{KGZ origin}) can be reformulated as the following first-order system 
\begin{equation}\label{kgz-1st}
\left\lbrace	\begin{aligned}
		E_t&= F, \\ 
		\varepsilon^{2}F_t &= \Delta E - \dfrac{1}{\varepsilon^{2}}E - EM, \\ 
		M_t &= \Delta N, \\ 
		N_t &= M + E^2,
	\end{aligned}\right.
\end{equation}
which also admits an infinite-dimensional Hamiltonian structure
\begin{equation*}
	{Z}_t = \mathcal{D}\frac{\delta \mathcal{H}}{\delta Z}, \quad
	Z = \left( E, F, M, N\right)^{\top}, \quad 
	\mathcal{D} = 
	\begin{pmatrix}
		0 & \frac{1}{2\varepsilon^{2}} & 0 & 0 \\
		-\frac{1}{2\varepsilon^{2}} & 0 & 0 & 0 \\ 
		0 & 0 & 0 & -1 \\ 
		0 & 0 & 1 & 0 
	\end{pmatrix},
\end{equation*}
and the corresponding Hamiltonian functional $\mathcal{H}$ is given by
\begin{equation*}
		\mathcal{H}(t)= \int_{\Omega} \Big(\dfrac{1}{\varepsilon^{2}}E^{2} + \varepsilon^{2} F^{2}+\left| \nabla E\right|^{2} \Big)+\Big(\dfrac{1}{2} M^{2} + \dfrac{1}{2}\left|\nabla N\right|^{2}\Big) + ME^{2} d\mathbf{x}.
\end{equation*}

\subsubsection{Spatial discretization}

Notice that zero Dirichlet boundary conditions are considered for the KGZ equations, thus it is natural to apply the sine pseudospectral method \cite{Zhao2016On} to achieve a highly accurate semi-discretization. The resulting semi-discrete scheme for the one-dimensional KGZ equations \eqref{kgz-1st} is then given by
\begin{equation}\label{kgz-semi}
\left\lbrace	\begin{aligned}
		\textbf{E}_t&= \textbf{F}, \\ 
		\varepsilon^{2}\textbf{F}_t &= \mathbb{D}_2 \textbf{E} - \dfrac{1}{\varepsilon^{2}}\textbf{E}-\textbf{E}\textbf{M}, \\ 
		\textbf{M}_t &= \mathbb{D}_2\textbf{N}, \\ 
		\textbf{N}_t&= \textbf{M}+\textbf{E}^2.
	\end{aligned}\right.
\end{equation}
Without any ambiguity, we still use $\mathbb{D}_{2}$ to represent the second-order spectral differential matrix related to the sine pseudospectral method, which can also be diagonalized as
\begin{equation*}
	\mathbb{D}_{2} = {S}_N^{-1}\Lambda_{dir}{S}_N, \quad \Lambda_{dir}= - \Big[\frac{\mu}{2}\mbox{diag}\big(1,2,\cdots,N-1\big)\Big]^2,
\end{equation*}
where $\mu$ is defined the same as that for the KGS equations, ${S}_N$ and $S_N^{-1}$  represent the discrete sine transform and its inverse, respectively. Hence, the fast sine transform (FST) can be utilized to accelerate the computation.

Further arranging the semi-discrete system \eqref{kgz-semi}, we obtain
\begin{equation}\label{KGZ semi}
	\left\lbrace\begin{aligned}
		\left(\begin{array}{c}
			{\textbf{E}}_t \\ {\textbf{F}}_t
		\end{array}\right)
		&=
			\left(\begin{array}{cc}
			0 & \frac{1}{2\varepsilon^{2}}I_{N} \\ 
			-\frac{1}{2\varepsilon^{2}}I_{N} & 0
		\end{array}\right)
		\left[
			\left(\begin{array}{cc}
			\frac{2}{\varepsilon^{2}}I_{N} - 2\mathbb{D}_{2} & 0 \\ 
			0 & 2\varepsilon^{2}I_{N}
		\end{array}\right)
			\left(\begin{array}{c}
			\textbf{E} \\ \textbf{F} 
		\end{array}\right)
		+ 
			\left(\begin{array}{c}
			2\textbf{E}\odot\textbf{M} \\ 0
		\end{array}\right)
		\right],
		\\[1ex]
		\left(\begin{array}{c}
			{\textbf{M}}_t \\ {\textbf{N}}_t
	\end{array}\right)
		&= 
		\left(\begin{array}{cc}
			0 & -I_{N} \\ 
			I_{N} & 0
		\end{array}\right)
		\left[
			\left(\begin{array}{cc}
			I_{N} & 0 \\ 
			0 & -\mathbb{D}_{2}
			\end{array}\right)
			\left(\begin{array}{c}
			\textbf{M} \\ \textbf{N}
			\end{array}\right)
		+
			\left(\begin{array}{c}
			\textbf{E}^2 \\ 0
		\end{array}\right)
		\right],
	\end{aligned}\right.
\end{equation}
which possesses the following discrete energy conservation law due to the symmetry of $\mathbb{D}_2$, i.e., 
\begin{equation}\label{kgz-semi-ene}
	\frac{d}{dt}H=0,\quad {H}=  \Big(\dfrac{1}{\varepsilon^{2}}\|\textbf{E}\|_{h,dir}^2 + \varepsilon^{2} \|\textbf{F}\|_{h,dir}^2+|\textbf{E}|_{1,dir}^{2} \Big)+\Big(\dfrac{1}{2} \|\textbf{M}\|_{h,dir}^2 + \dfrac{1}{2}|\textbf{N}|_{1,dir}^{2}\Big) + (\textbf{M},\textbf{E}^{2})_{h,dir},
\end{equation}
where the discrete inner product and related norms are defined as follows:
\begin{equation*}
	\left(\mathbf{u}, \mathbf{v}\right)_{h,dir} = h\sum\limits_{l = 1}^{N_{x}-1} \mathbf{u}_{l}\mathbf{v}_{l}, \quad \Vert \mathbf{u} \Vert_{h,dir} = \left(\mathbf{u}, \mathbf{u}\right)_{h, dir}^{1/2}, \quad |\mathbf{u}|_{1,dir} = \left(-\mathbb{D}_{2}\mathbf{u}, \mathbf{u}\right)^{1/2}_{h,dir}.
\end{equation*}
Here, `\textit{dir}' is used for the homogeneous Dirichlet boundary condition.

\subsubsection{Derivation of the EPAVF schemes}

Similarly, we still need to calculate the matrix exponentials in advance. It is clear that the corresponding matrices of the system \eqref{KGZ semi} have different structures from that in \eqref{KGS semi}, and the resulting matrix exponentials are given by
\begin{equation}\label{kgz-exp}
	\exp{\left(V_{1}\right)} =
	\left( 
	\begin{smallmatrix}
		\cos{\left(\tau \widetilde{\mathbb{D}}_{2}^{1/2}\right)} & \frac{\sin{\left(\tau \widetilde{\mathbb{D}}_{2}^{1/2}\right)}}{\widetilde{\mathbb{D}}_{2}^{1/2}} \\ 
		-\widetilde{\mathbb{D}}_{2}^{1/2}\sin{\left(\tau \widetilde{\mathbb{D}}_{2}^{1/2}\right)} & \cos{\left(\tau \widetilde{\mathbb{D}}_{2}^{1/2}\right)} 
	\end{smallmatrix}
	\right), \quad 
	\exp{\left(V_{2}\right)} = 
	\left(
	\begin{smallmatrix}
		\cos{\left(\tau(-\mathbb{D}_{2})^{1/2}\right)} & -(-\mathbb{D}_{2})^{1/2}\sin{\left(\tau (-\mathbb{D}_{2})^{1/2}\right)} \\ 
		\frac{\sin{\left(\tau(-\mathbb{D}_{2})^{1/2}\right)}}{(-\mathbb{D}_{2})^{1/2}} & \cos{\left(\tau (-\mathbb{D}_{2})^{1/2}\right)}  
	\end{smallmatrix}
	\right),
\end{equation}
\begin{equation}\label{kgz-phi}
	\varphi\left(V_{1}\right) = 
	\left(
	\begin{smallmatrix}
		\frac{\sin{\left(\tau \widetilde{\mathbb{D}}_{2}^{1/2}\right)}}{\tau\widetilde{\mathbb{D}}_{2}^{1/2}} & \frac{I - \cos{\left(\tau \widetilde{\mathbb{D}}_{2}^{1/2}\right)}}{\tau\widetilde{\mathbb{D}}_{2}} \\
		\frac{\cos{\left(\tau \widetilde{\mathbb{D}}_{2}^{1/2}\right)} - I}{\tau} & 	\frac{\sin{\left(\tau \widetilde{\mathbb{D}}_{2}^{1/2}\right)}}{\tau\widetilde{\mathbb{D}}_{2}^{1/2}}
	\end{smallmatrix}
	\right), \quad 
	\varphi\left(V_{2}\right) = 
	\left(
	\begin{smallmatrix}
		\frac{\sin{\left(\tau (-\mathbb{D}_2)^{1/2}\right)}}{\tau(-\mathbb{D}_2)^{1/2}} & \frac{\cos{\left(\tau (-\mathbb{D}_2)^{1/2}\right)} - I}{\tau} \\ 
	\frac{I - \cos{(\tau (-\mathbb{D}_2)^{1/2})}}{-\tau \mathbb{D}_2}&  \frac{\sin{\left(\tau (-\mathbb{D}_2)^{1/2}\right)}}{\tau(-\mathbb{D}_2)^{1/2}}
	\end{smallmatrix}
	\right),
\end{equation}
where $\widetilde{\mathbb{D}}_2 = \frac{1}{\varepsilon^{4}} - \frac{1}{\varepsilon^{2}}\mathbb{D}_2$. Without any ambiguity, we still use the notations ${\exp}^i_{jk}$ and ${\varphi}^i_{jk}$, $i,j,k=1,2$ to represent the elements in $\exp(V_i)$ and $\varphi(V_i)$ for the KGZ equations, respectively.

\begin{rmk}
   As mentioned above, the differential matrix $\mathbb{D}_2$ under zero boundary conditions can also be diagonalized, so that the computation of those matrix exponentials is done similarly as \eqref{computed of exponential}. The only differences are the definitions of $\widetilde{\exp}^i_{jk}$ and $\widetilde{\varphi}^i_{jk}$, whose elements are obtained by replacing $\mathbb{D}_2$ and $\widetilde{\mathbb{D}}_2$ in \eqref{kgz-exp}-\eqref{kgz-phi} with $\mbox{diag}(\Lambda_{dir})$ and $\mbox{diag}(\widetilde{\Lambda}_{dir})$ respectively, where $\widetilde{\Lambda}_{dir}=\frac{1}{\varepsilon^{4}} - \frac{1}{\varepsilon^{2}}\Lambda_{dir}$.

\end{rmk}

Applying the EPAVF method for the semi-discretization \eqref{KGZ semi}, we obtain
\begin{equation}\label{epavf kgz}
	\begin{cases}
		\mathbf{E}^{n+1} = \exp_{11}^1\mathbf{E}^{n} + \exp_{12}^1\mathbf{F}^{n} - \frac{\tau}{\varepsilon^{2}} \varphi^{1}_{12} \mathbf{M}^{n} \odot\mathbf{E}^{n+1/2}, \\
		\mathbf{F}^{n+1} = \exp_{21}^1\mathbf{E}^{n} + \exp_{22}^1\mathbf{F}^{n} - \frac{\tau}{\varepsilon^{2}} \varphi^{1}_{22} \mathbf{M}^{n} \odot\mathbf{E}^{n+1/2}, \\ 
		\mathbf{M}^{n+1} = \exp_{11}^2\mathbf{M}^{n} + \exp_{12}^2\mathbf{N}^{n} + \tau \varphi^{2}_{12} \left(\mathbf{E}^{n+1}\right)^{2}, \\ 
		\mathbf{N}^{n+1} = \exp_{21}^2\mathbf{M}^{n} + \exp_{22}^2\mathbf{N}^{n} + \tau \varphi^{2}_{22} \left(\mathbf{E}^{n+1}\right)^{2}.
	\end{cases}
\end{equation} 
It is observed that once  $\mathbf{E}^{n+1}$  is solved from the first linear system, the remaining unknowns $\mathbf{F}^{n+1}$, $\mathbf{M}^{n+1}$, and $\mathbf{N}^{n+1}$ can then be explicitly calculated from the last three equations. Therefore, the EPAVF scheme \eqref{epavf kgz} for the KGZ equations is extremely efficient. Similar observations can be made from the adjoint scheme
\begin{equation}\label{epavf kgz adjoint}
	\begin{cases}
		\mathbf{E}^{n+1} = \exp_{11}^1\mathbf{E}^{n} + \exp_{12}^1\mathbf{F}^{n} - \frac{\tau}{\varepsilon^{2}} \varphi^{1}_{12} \mathbf{M}^{n+1} \odot\mathbf{E}^{n+1/2}, \\
		\mathbf{F}^{n+1} = \exp_{21}^1\mathbf{E}^{n} + \exp_{22}^1\mathbf{F}^{n} - \frac{\tau}{\varepsilon^{2}} \varphi^{1}_{22} \mathbf{M}^{n+1}\odot \mathbf{E}^{n+1/2}, \\ 
		\mathbf{M}^{n+1} = \exp_{11}^2\mathbf{M}^{n} + \exp_{12}^2\mathbf{N}^{n} + \tau \varphi^{2}_{12} \left(\mathbf{E}^{n}\right)^{2}, \\ 
		\mathbf{N}^{n+1} = \exp_{21}^2\mathbf{M}^{n} + \exp_{22}^2\mathbf{N}^{n} + \tau \varphi^{2}_{22} \left(\mathbf{E}^{n}\right)^{2}.
	\end{cases}
\end{equation}
We first compute $\mathbf{M}^{n+1}$ and $\mathbf{N}^{n+1}$ from the last two equations explicitly. Subsequently, $\mathbf{E}^{n+1}$ can be solved by the linear system, and then the computation of $\mathbf{F}^{n+1}$ becomes explicit as well. In summary, both EPAVF and its adjoint only have to solve one linear system with variable coefficients while the other three equations are solved explicitly. Therefore, their composition (second-order EPAVF-C) will also be very efficient comparing to the existing schemes. 

\begin{thm}
	The EPAVF schemes, i.e., \eqref{epavf kgz}, \eqref{epavf kgz adjoint} and their composition, all preserve a fully discrete energy conservation law
	\[
	H(\textbf{E}^{n+1},\textbf{F}^{n+1},\textbf{M}^{n+1},\textbf{N}^{n+1})=H(\textbf{E}^n,\textbf{F}^n,\textbf{M}^n,\textbf{N}^n),
	\]
	where the energy function $H$ is defined in \eqref{kgz-semi-ene}.
\end{thm}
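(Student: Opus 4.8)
The plan is to avoid any fresh computation and instead recognize that the semi-discrete KGZ system \eqref{KGZ semi} is a concrete instance of the abstract two-component Hamiltonian system \eqref{eq1-1}, and that the schemes \eqref{epavf kgz} and \eqref{epavf kgz adjoint} are exactly the abstract EPAVF \eqref{epavf 2 block} and its adjoint \eqref{epavf 2 block adjoint} applied to it. Once this identification is secured, the fully discrete energy conservation follows at once from the general energy conservation theorem of Section~\ref{Properties} and the accompanying corollary for the composition. Thus the proof reduces to a bookkeeping verification: confirming the symmetry/skew-symmetry structure, matching the discrete gradients, and matching the conserved quantity.

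First I would set $p=(\textbf{E},\textbf{F})^\top$ and $q=(\textbf{M},\textbf{N})^\top$ and read off from \eqref{KGZ semi} the blocks
\[
S_1=\begin{pmatrix} 0 & \frac{1}{2\varepsilon^2}I_N \\ -\frac{1}{2\varepsilon^2}I_N & 0 \end{pmatrix},\quad L_1=\begin{pmatrix} \frac{2}{\varepsilon^2}I_N-2\mathbb{D}_2 & 0 \\ 0 & 2\varepsilon^2 I_N \end{pmatrix},
\]
\[
S_2=\begin{pmatrix} 0 & -I_N \\ I_N & 0 \end{pmatrix},\quad L_2=\begin{pmatrix} I_N & 0 \\ 0 & -\mathbb{D}_2 \end{pmatrix},
\]
together with the discrete potential $U(p,q)=\sum_l \textbf{M}_l\textbf{E}_l^2$. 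The structural checks are then: $S_1,S_2$ are skew-symmetric by inspection, and $L_1,L_2$ are symmetric because the sine-pseudospectral matrix $\mathbb{D}_2=S_N^{-1}\Lambda_{dir}S_N$ is symmetric under the homogeneous Dirichlet boundary condition. I would next confirm that $\tfrac12 p^\top L_1 p+\tfrac12 q^\top L_2 q+U(p,q)$ reproduces, up to the overall factor $h$ carried by the discrete inner product $(\cdot,\cdot)_{h,dir}$, the discrete Hamiltonian $H$ of \eqref{kgz-semi-ene}; since multiplying by the fixed constant $h$ does not affect conservation, the conserved abstract quantity is indeed the stated energy.

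Finally I would match the discrete gradients. From $\nabla_{\textbf{E}}U=2\textbf{M}\odot\textbf{E}$, $\nabla_{\textbf{F}}U=0$, $\nabla_{\textbf{M}}U=\textbf{E}^2$, $\nabla_{\textbf{N}}U=0$, the AVF-type integrals in the definition \eqref{dg epavf} collapse to $\overline{\nabla}_pU$ carrying $\textbf{M}^n\odot\textbf{E}^{n+1/2}$ and $\overline{\nabla}_qU$ carrying $(\textbf{E}^{n+1})^2$, which, after forming $S_1\overline{\nabla}_pU$ and $S_2\overline{\nabla}_qU$ and multiplying by $\tau\varphi(V_i)$, are precisely the terms appearing in \eqref{epavf kgz}; the opposite coupling ordering reproduces \eqref{epavf kgz adjoint}. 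With this identification the general theorem of Section~\ref{Properties} applies verbatim to give $H(p^{n+1},q^{n+1})=H(p^n,q^n)$ for both schemes, and the corollary yields the same law for their composition (EPAVF-C). The one delicate point, and the main obstacle, is the consistent absorption of all scalar prefactors—the $\tfrac{1}{2\varepsilon^2}$ in $S_1$ against the factors of $2$ in $L_1$ and in $\nabla_{\textbf{E}}U$, which combine to the $\tfrac{1}{\varepsilon^2}$ seen in \eqref{epavf kgz}—so that the match with \eqref{eq1-1} is exact; once the symmetry of $\mathbb{D}_2$ is in hand this is routine algebra.
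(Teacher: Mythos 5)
Your proposal is correct and coincides with the paper's (implicit) argument: the paper states this theorem without a separate proof precisely because it is the general energy-conservation theorem and corollary of Section~\ref{Properties} applied to the semi-discrete KGZ system, with $S_1,S_2$ skew-symmetric, $L_1,L_2$ symmetric via the symmetry of the sine-pseudospectral $\mathbb{D}_2$, and the discrete gradients and Hamiltonian matching exactly as you verify. The bookkeeping of the scalar prefactors and the factor $h$ in the discrete inner product is handled correctly, so nothing further is needed.
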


For comparisons, we present another two energy-preserving schemes in \cite{KGZWang2007} whose spatial discretizations are replaced by the sine pseudospectral method. The first one is a fully implicit scheme which is denoted by CISP and has the form
\begin{equation*}
	\begin{cases}
		\varepsilon^{2}\delta_{t}^{2}\mathbf{E}^{n} - \frac{1}{2}\mathbb{D}_{2}\left(\mathbf{E}^{n+1} + \mathbf{E}^{n-1}\right) + \frac{1}{2\varepsilon^{2}}\left(\mathbf{E}^{n+1} + \mathbf{E}^{n-1}\right) + \frac{1}{4}\left(\mathbf{M}^{n+1} + \mathbf{M}^{n-1}\right) \odot\left(\mathbf{E}^{n+1} + \mathbf{E}^{n-1}\right)= 0, \\
		\delta_{t}^{2}\mathbf{M}^{n} - \frac{1}{2}\mathbb{D}_{2}\left(\mathbf{M}^{n+1}+\mathbf{M}^{n-1}\right) - \frac{1}{2}\mathbb{D}_{2}\left(\left(\mathbf{E}^{n+1}\right)^{2}+\left(\mathbf{E}^{n-1}\right)^{2}\right) = 0, \
	\end{cases}
\end{equation*}
where $\delta_t^2\textbf{E}^n=(\textbf{E}^{n+1}-2\textbf{E}^{n}+\textbf{E}^{n-1})/\tau^2$.  The other is a decoupled and explicit scheme written as
\begin{equation*}\label{disp}
	\begin{cases}
		\varepsilon^{2}\delta_{t}^{2}\mathbf{E}^{n} - \mathbb{D}_{2}\mathbf{E}^{n} + \frac{1}{2\varepsilon^{2}}\left(\mathbf{E}^{n+1}+\mathbf{E}^{n-1}\right) + \frac{1}{2}\mathbf{M}^{n}\odot\left(\mathbf{E}^{n+1} + \mathbf{E}^{n-1}\right) = 0, \\ 
		\delta_{t}^{2}\mathbf{M}^{n} - \mathbb{D}_{2}\mathbf{M}^{n} - \mathbb{D}_{2}\left(\mathbf{E}^{n}\right)^2 = 0, 
	\end{cases}
\end{equation*}
and will be denoted by DISP in the later discussion.

\subsubsection{Numerical experiments}
\begin{ex}
	Consider the one-dimensional KGZ equations with smooth initial conditions
	\begin{equation*}
		E_{0}(x) = \sin{\left(\dfrac{x}{2}\right)}e^{-x^{2}}, \ E_{1}(x) = \dfrac{1}{2} e^{-\sqrt{2}x^{2}}, \ M_{0}(x) = \mbox{\rm sech}{\left(x^{2}\right)}, \ M_{1}(x) = \cos{\left(\dfrac{x}{3}\right)}e^{-x^{2}},
	\end{equation*}
	and zero boundary conditions on the domain $\Omega= [-32, 32]$.
\end{ex}

To get the convergence rate in time, we first give the definitions of errors as  follows
\begin{equation*}
	e_{E, \varepsilon}^{\tau, h} =  \Vert E(\cdot, 1) - E^{n} \Vert_{\infty}, \quad 
	e_{M, \varepsilon}^{\tau, h} =  \Vert  M(\cdot, 1) - M^{n} \Vert_{\infty},
\end{equation*}
where $E(\cdot,1)$ and $M(\cdot,1)$ are reference solutions obtained by EPAVF-C with very fine mesh $h=1/32$ and time step $\tau=2.5\times 10^{-6}$. Let the spatial step be small enough, i.e., $h = 1/8$ so the spatial errors are negligible. We verify the convergence of EPAVF-C as well as CISP, DISP. In Tables~\ref{tab5}-\ref{tab7}, the corresponding errors and the convergence rates with different $\varepsilon$ are listed. For EPAVF-C, we present the convergence results of both $E$ and $M$, while for others we only show the results of $E$ as a similar result can be found for $M$.

 From Tables~\ref{tab6}-\ref{tab9}, we can draw the following observations: 
 \begin{itemize}
 	\item[(i)] The temporal discretization of all the schemes show a second-order convergence for fixed $\varepsilon$ provided the time step $\tau$ is small enough (cf. each row above the diagonal in Tables~\ref{tab6}-\ref{tab9}). In addition, for fixed $\varepsilon$ and $\tau$, the errors of EPAVF-C are much smaller than that of CISP and DISP, especially when $\varepsilon$ is very small.
 	 	
 	\item[(ii)] The non-exponential ones (CISP and DISP) have a very severe numerical stability constraint when $\varepsilon$ is small (cf. last row in Table~\ref{tab8} and last few rows in Table~\ref{tab9}), while EPAVF-C is stable in computation for all $0<\varepsilon\ll 1$.
 	
 	\item[(iii)] The mesh strategy (or $\varepsilon$-scalability) of EPAVF-C is again $\tau=\mathcal{O}(\varepsilon^2)$ as that for the KGS equations (cf. upper triangle above the diagonal with values in italics of Tables~\ref{tab6}-\ref{tab7}). While the $\varepsilon$-scalability of CISP and DISP is $\tau=\mathcal{O}(\varepsilon^3)$  (cf. upper triangle above the diagonal with values in italics of Tables~\ref{tab8}-\ref{tab9}). These clearly illustrate that EPAVF-C has much better resolution than CISP and DISP.
 	
 \end{itemize}

\begin{table}[H]
        \centering
        \caption{Temporal error analysis of $E$ solved by EPAVF-C with different $\varepsilon$}
        \begin{tabular*}{0.9\textwidth}[h]{@{\extracolsep{\fill}}c c c c c c c c c}
            \toprule[2pt]
            & & $\tau_0 = 0.2$ & $\tau_0/2^2$ & $\tau_0/2^4$ & $\tau_0/2^6$ & $\tau_0/2^8$ & $\tau_0/2^{10}$ \\  
            \hline
             \multirow{2}{*}{$\varepsilon_0=1$} & $e_{E,\varepsilon}^{\tau,h}$ & 4.5026e-04 & 2.8138e-05 & 1.7587e-06 & 1.0992e-07 & 6.8730e-09 & 4.3241e-10 \\ 
            & $\text{Rate}$ & - & 2.0001 & 2.0000 & 2.0000 & 1.9997 & 1.9952 \\ 
            \hline 
            \multirow{2}{*}{$\varepsilon_0/2$} & $e_{E,\varepsilon}^{\tau,h}$ & \textit{4.7201e-03} & 3.00077e-04 & 1.8773e-05 & 1.1734e-06 & 7.33444e-08 & 4.5906e-09 \\ 
            & $\text{Rate}$ & - & 1.9877 & 1.9993 & 2.0000 & 2.0000 & 1.9990 \\ 
            \hline 
            \multirow{2}{*}{$\varepsilon_0/2^2$} & $e_{E,\varepsilon}^{\tau,h}$ & 6.1662e-02 & \textit{4.4606e-03} & 2.8108e-04 & 1.7576e-05 & 1.0985e-06 & 6.8665e-08 \\ 
            & $\text{Rate}$ & - & 1.8945 & 1.9941 & 1.9996 & 2.0000 & 1.9999 \\ 
            \hline 
            \multirow{2}{*}{$\varepsilon_0/2^3$} & $e_{E,\varepsilon}^{\tau,h}$ & 1.0820e-01 & 4.7894e-02 & \textit{3.6695e-03} & 2.3245e-04 & 1.4540e-05 & 9.0864e-07 \\ 
            & $\text{Rate}$ & - & 0.5879 & 1.8531 & 1.9903 & 1.9994 & 2.0001 \\ 
            \hline 
            \multirow{2}{*}{$\varepsilon_0/2^4$} & $e_{E,\varepsilon}^{\tau,h}$ & 1.4215e-01 & 1.4202e-01 & 6.1338e-02 & \textit{4.4850e-03} & 2.8302e-04 & 1.7697e-05 \\ 
            & $\text{Rate}$ & - & 0.0001 & 0.6056 & 1.8868 & 1.9930 & 1.9998 \\ 
            \hline 
            \multirow{2}{*}{$\varepsilon_0/2^5$} & $e_{E,\varepsilon}^{\tau,h}$ & 1.7496e-01 & 1.7323e-01 & 1.7310e-01 & 6.4284e-02 & \textit{4.4111e-03} & 2.7710e-04 \\ 
            & $\text{Rate}$ & - & 0.0072 & 0.0005 & 0.7145 & 1.9326 & 1.9963 \\ 
            \bottomrule[2pt]
       \end{tabular*}
        \label{tab6}
    \end{table}

\begin{table}[H]
        \centering
        \caption{Temporal error analysis of $M$ solved by EPAVF-C with different $\varepsilon$}
        \begin{tabular*}{0.9\textwidth}[h]{@{\extracolsep{\fill}}c c c c c c c c c}
            \toprule[2pt]
            & & $\tau_0 = 0.2$ & $\tau_0/2^2$ & $\tau_0/2^4$ & $\tau_0/2^6$ & $\tau_0/2^8$ & $\tau_0/2^{10}$ \\  
            \hline
             \multirow{2}{*}{$\varepsilon_0=1$} & $e_{M,\varepsilon}^{\tau,h}$ & 1.3973e-03 & 8.5245e-05 & 5.3220e-06 & 3.3358e-07 & 2.1830e-08 & 2.8404e-09 \\ 
            & $\text{Rate}$ & - & 2.0174 & 2.0008 & 1.9979 & 1.9967 & 1.4711 \\ 
            \hline 
            \multirow{2}{*}{$\varepsilon_0/2$} & $e_{M,\varepsilon}^{\tau,h}$ & \textit{2.8792e-03} & 1.2223e-04 & 7.4972-06 & 4.6806e-07 & 2.9278e-08 & 2.7850e-09 \\ 
            & $\text{Rate}$ & - & 2.2790 & 2.0136 & 2.0008 & 1.9994 & 1.6970 \\ 
            \hline 
            \multirow{2}{*}{$\varepsilon_0/2^2$} & $e_{M,\varepsilon}^{\tau,h}$ & 8.0883e-02 & \textit{4.7937e-04} & 2.6215e-05 & 1.6257e-06 & 1.0184e-07 & 6.6343e-09 \\ 
            & $\text{Rate}$ & - & 3.6993 & 2.0963 & 2.0056 & 1.9984 & 1.9701 \\ 
            \hline 
            \multirow{2}{*}{$\varepsilon_0/2^3$} & $e_{M,\varepsilon}^{\tau,h}$ & 5.8848e-02 & 5.5010e-02 & \textit{1.4515e-04} & 9.0062e-06 & 5.6224e-07 & 3.5801e-08 \\ 
            & $\text{Rate}$ & - & 0.0487 & 4.2830 & 2.0052 & 2.0008 & 1.9866 \\ 
            \hline 
            \multirow{2}{*}{$\varepsilon_0/2^4$} & $e_{M,\varepsilon}^{\tau,h}$ & 1.5136e-02 & 1.4860e-02 & 1.4013e-02 & \textit{1.1820e-04} & 7.3985e-06 & 4.6162e-07 \\ 
            & $\text{Rate}$ & - & 0.0133 & 0.0423 & 3.4447 & 1.9989 & 2.0012 \\ 
            \hline 
            \multirow{2}{*}{$\varepsilon_0/2^5$} & $e_{M,\varepsilon}^{\tau,h}$ & 1.6533e-02 & 5.2087e-03 & 5.0550e-03 & 3.2985e-03 & \textit{1.1094e-04} & 6.9635e-06 \\ 
            & $\text{Rate}$ & - & 0.8332 & 0.0216 & 0.3080 & 2.4470 & 1.9969 \\ 
            \bottomrule[2pt]
       \end{tabular*}
        \label{tab7}
    \end{table}

    \begin{table}[H]
        \centering
        \caption{Temporal error analysis of $E$ solved by CISP with different $\varepsilon$. Here and in what follows, “*” means that the method is numerically unstable under the corresponding choice of $\varepsilon$ and $\tau$.}
        \begin{tabular*}{0.9\textwidth}[h]{@{\extracolsep{\fill}}c c c c c c c c c}
            \toprule[2pt]
            & & $\tau_0 = 0.2$ & $\tau_0/2^3$ & $\tau_0/2^6$ & $\tau_0/2^9$ & $\tau_0/2^{12}$ \\  
            \hline
             \multirow{2}{*}{$\varepsilon_0=1$} & $e_{E,\varepsilon}^{\tau,h}$ & 2.2250e-02 & 3.8839e-04 & 6.1241e-06 & 9.5739e-08 & 2.6319e-09  \\ 
            & $\text{Rate}$ & - & 1.9467 & 1.9956 & 1.9997 & 1.7283    \\ 
            \hline 
            \multirow{2}{*}{$\varepsilon_0/2$} & $e_{E,\varepsilon}^{\tau,h}$ & \textit{2.8618e-01} & 7.0958e-03 & 1.1231e-04 & 1.7566e-06 & 2.7495e-08   \\ 
            & $\text{Rate}$ & - & 1.7779 & 1.9938 & 1.9995 & 1.9992    \\ 
            \hline 
            \multirow{2}{*}{$\varepsilon_0/2^2$} & $e_{E,\varepsilon}^{\tau,h}$ & 1.5009e-00 & \textit{2.3124e-01} & 3.6975e-03 & 5.7757e-05 & 9.0209e-07 \\ 
            & $\text{Rate}$ & - & 0.8992 & 1.9889 & 2.0001 & 2.0002  \\ 
            \hline 
            \multirow{2}{*}{$\varepsilon_0/2^3$} & $e_{E,\varepsilon}^{\tau,h}$ & 6.2047e-00 & 5.4579e-01 & \textit{1.6995e-01} & 2.6003e-03 & 4.0609e-05  \\ 
            & $\text{Rate}$ & - & 1.1690 & 0.5611 & 2.0101 & 2.0003  \\ 
            \hline 
            \multirow{2}{*}{$\varepsilon_0/2^4$} & $e_{E,\varepsilon}^{\tau,h}$ & 2.5700e+01 & 2.9789e-00 & 7.3970e-01 & \textit{1.4971e-01} & 2.2510e-03   \\ 
            & $\text{Rate}$ & - & 1.0363 & 0.6699 & 0.7683 & 2.0185  \\ 
            \hline 
            \multirow{2}{*}{$\varepsilon_0/2^5$} & $e_{E,\varepsilon}^{\tau,h}$ & * & 1.8700e-00 & 1.6166e-00 & 3.8812e-01 & 1.1802e-01  \\ 
            & $\text{Rate}$ & - & * & 0.0700 & 0.6861 & 0.3668  \\ 
            \bottomrule[2pt]
        \end{tabular*}
        \label{tab8}
    \end{table}

   \begin{table}[H]
        \centering
        \caption{Temporal error analysis of $E$ solved by DISP with different $\varepsilon$}
        \begin{tabular*}{0.9\textwidth}[h]{@{\extracolsep{\fill}}c c c c c c c c c}
            \toprule[2pt]
            & & $\tau_0 = 0.2$ & $\tau_0/2^3$ & $\tau_0/2^6$ & $\tau_0/2^9$ & $\tau_0/2^{12}$ \\  
            \hline
             \multirow{2}{*}{$\varepsilon_0=1$} & $e_{E,\varepsilon}^{\tau,h}$ & 1.0823e-02 & 1.6960e-04 & 2.6500e-06 & 4.1345e-08 & 2.5757e-09  \\ 
            & $\text{Rate}$ & - & 1.9986 & 2.0000 & 2.0007 & 1.3349    \\ 
            \hline 
            \multirow{2}{*}{$\varepsilon_0/2$} & $e_{E,\varepsilon}^{\tau,h}$ & \textit{2.3028e-01} & 2.7611e-03 & 4.3160e-05 & 6.7439e-07 & 1.0606e-08   \\ 
            & $\text{Rate}$ & - & 2.1273 & 2.0000 & 2.0000 & 1.9969   \\ 
            \hline 
            \multirow{2}{*}{$\varepsilon_0/2^2$} & $e_{E,\varepsilon}^{\tau,h}$ & 1.7825e-00 & * & 3.0295e-03 & 4.7333e-05 & 7.3931e-07 \\ 
            & $\text{Rate}$ & - & * & * & 2.0000 & 2.0002  \\ 
            \hline 
            \multirow{2}{*}{$\varepsilon_0/2^3$} & $e_{E,\varepsilon}^{\tau,h}$ & 5.8139e-00 & * & \textit{1.6017e-01} & 2.4311e-03 & 3.7963e-05  \\ 
            & $\text{Rate}$ & - & * & * & 2.0139 & 2.0003  \\             
            \hline 
            \multirow{2}{*}{$\varepsilon_0/2^4$} & $e_{E,\varepsilon}^{\tau,h}$ & 2.9715e+01 & * & 7.3252e-01 & \textit{1.4722e-01} & 2.2097e-03   \\ 
            & $\text{Rate}$ & - & * & * & 0.7716 & 2.0193  \\ 
            \hline 
            \multirow{2}{*}{$\varepsilon_0/2^5$} & $e_{E,\varepsilon}^{\tau,h}$ & * & 1.4539e-00 & 1.4479e-00 & 3.6662e-01 & 1.8065e-01  \\ 
            & $\text{Rate}$ & - & * & 0.0020 & 0.6605 & 0.3404  \\ 
            \bottomrule[2pt]
                    \end{tabular*}
        \label{tab9}
    \end{table}

Next, we compare the efficiency of the three schemes with $\varepsilon=1$. From Figure~\ref{fig:eff-kgz}, we can observe that DISP is the most efficient one due to the explicit implementation, while CISP is the worst because of the fully implicit property. However, for very smaller $\varepsilon$, i.e., $0<\varepsilon\ll 1$, DISP is numerically unstable as depicted in Table~\ref{tab9}, and EPAVF-C is recommended then in terms of the stability, computational efficiency, and $\varepsilon$-scalability.

\begin{figure}[H]
	\centering
		\includegraphics[width=0.45\linewidth]{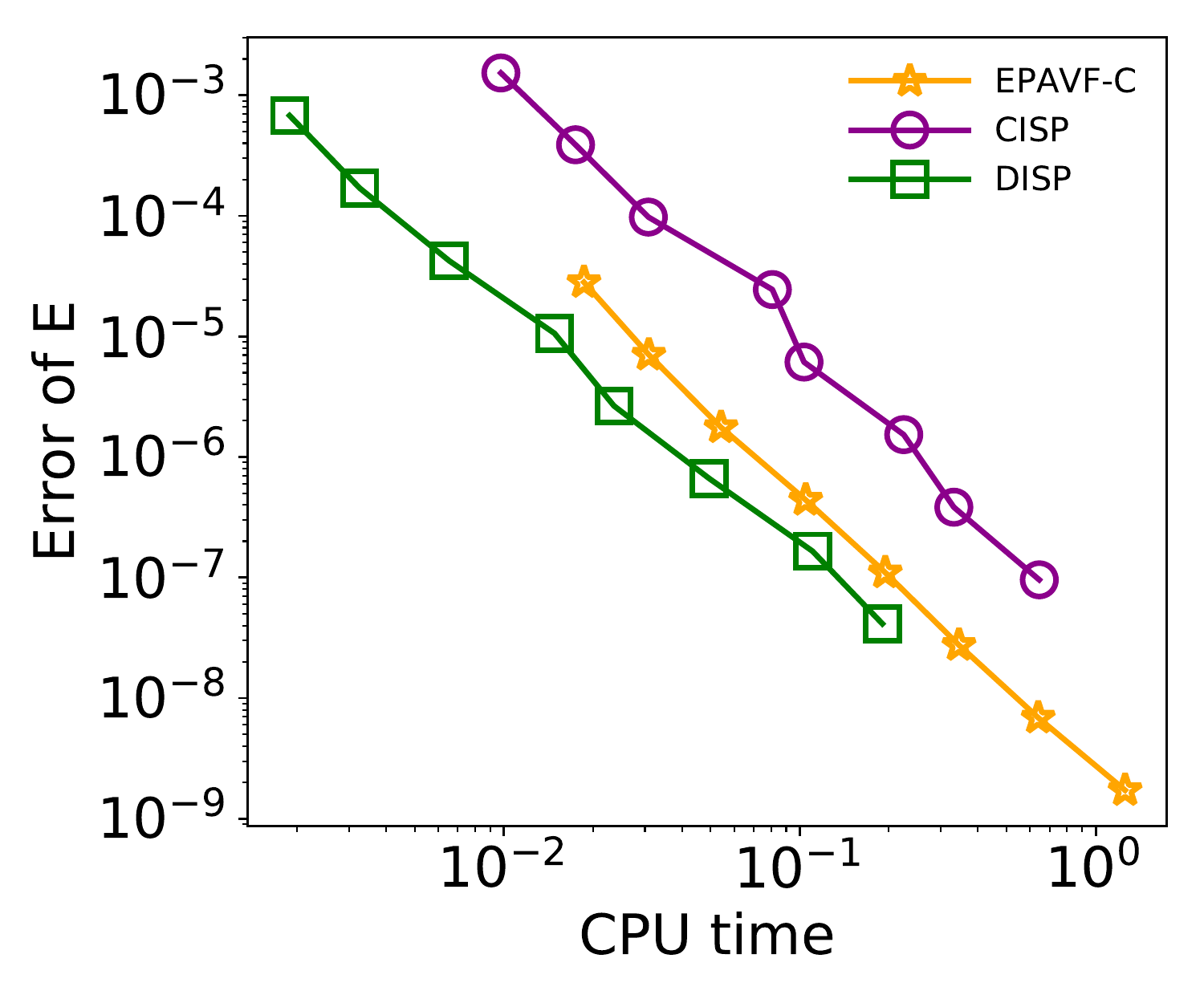}
		\includegraphics[width=0.45\linewidth]{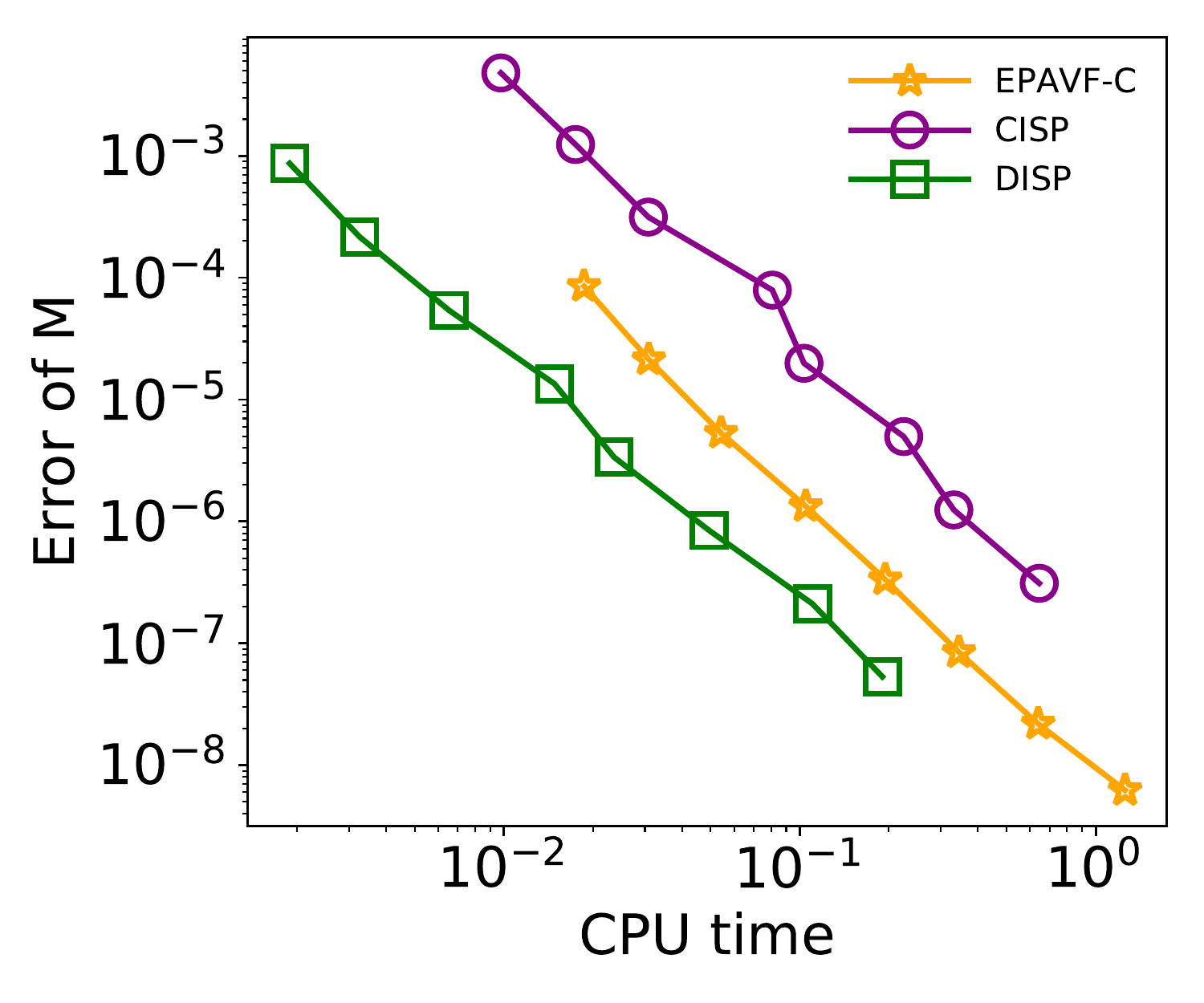}
	\caption{Computational efficiency of different schemes for the 1D KGZ equations with respect to $E$ (left) and $M$ (right) under the time step $\tau = 0.05 \times 2^{-k}, k =0 , \cdots, 7$ and $\varepsilon=1$.}
	\label{fig:eff-kgz}
\end{figure}

In Figure~\ref{accuracy epavf kgz}, the temporal errors of EPAVF and its adjoint for the KGZ equations are presented, which again verify the conclusion made in the experiment for the KGS equations. For long-term simulation, we set the termination time $T = 100$, and solve the KGZ equations by EPAVF-C with $\varepsilon = 1, 0.5, 0.25, 0.125$. The energy errors are presented in Figure~\ref{fig:ene-kgz1d}, which uniformly reach machine accuracy.

\begin{figure}[H]
	\centering
	\begin{minipage}[t!]{0.45\textwidth}
		\centering
		\includegraphics[width=1\linewidth]{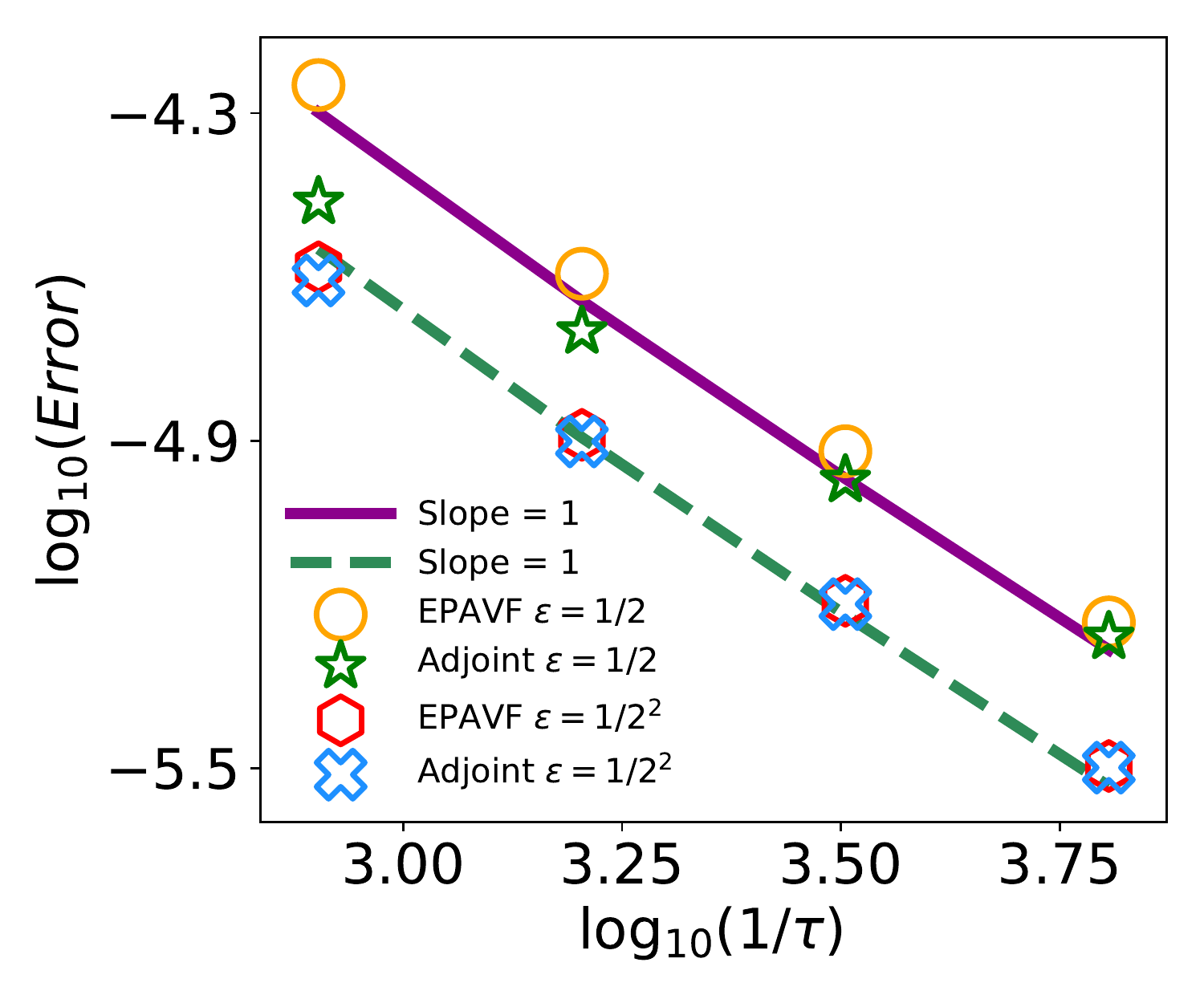}
	\end{minipage}
	\begin{minipage}[t!]{0.45\textwidth}
	\centering
	\includegraphics[width=1\linewidth]{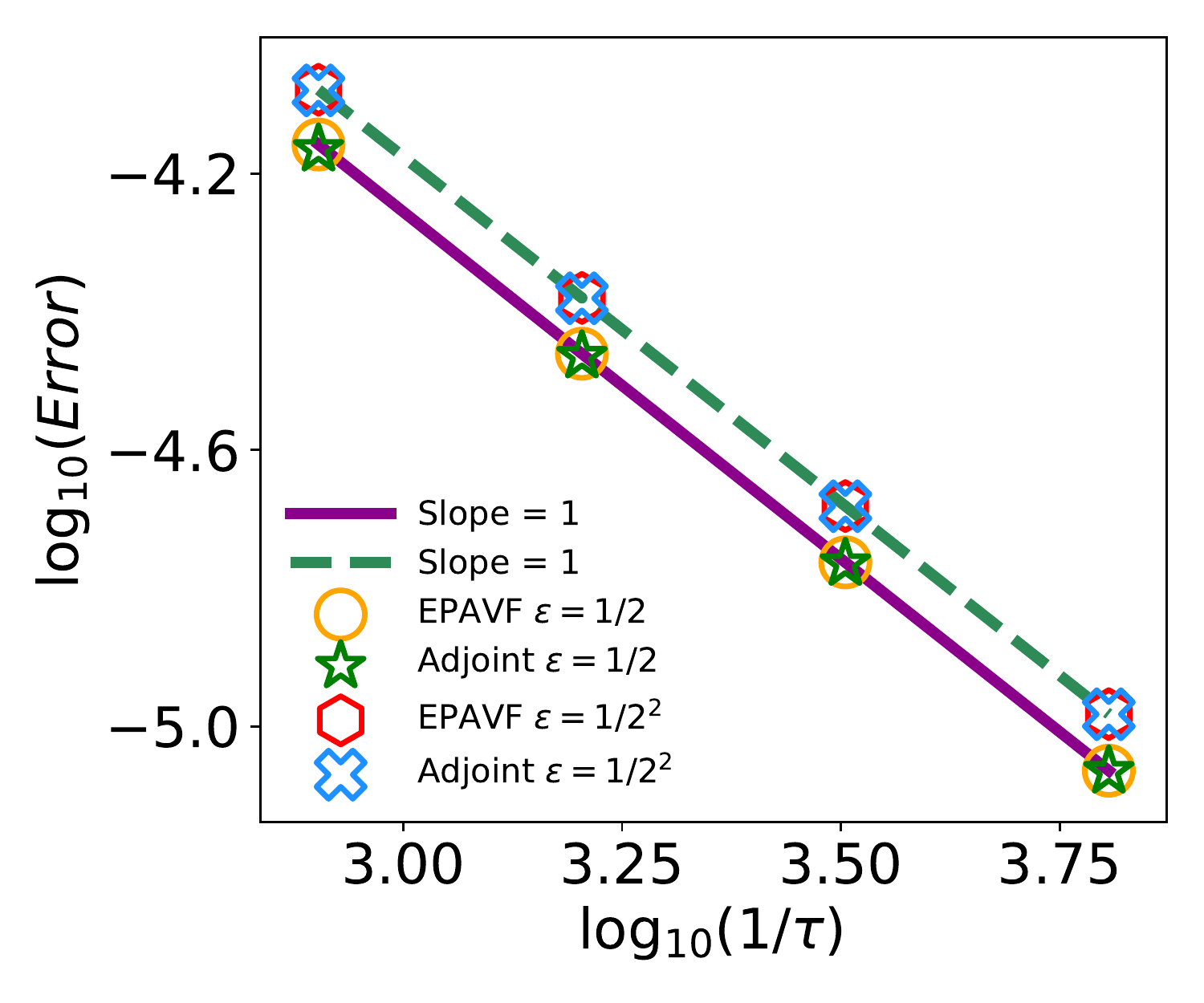}
\end{minipage}
	\caption{Temporal error of EPAVF and its adjoint for the 1D KGZ equations with respect to $E$ (left) and $M$ (right) under the time step $\tau = 0.00125\times 2^{-k}, k =0 , \cdots, 3$.}
\label{accuracy epavf kgz}
\end{figure}

\begin{figure}[H]
		\centering
		\includegraphics[width=0.9\textwidth]{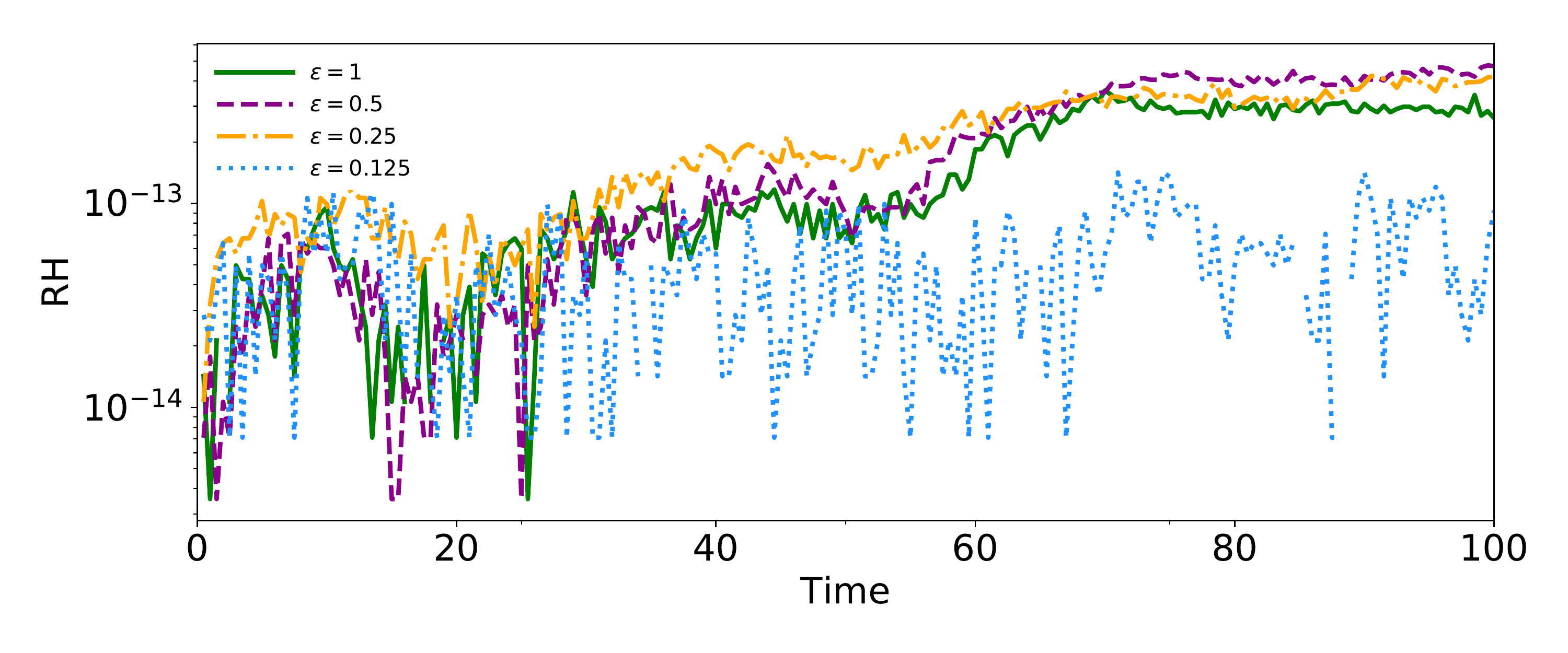}
	\caption{Energy error of the 1D KGZ equations solved by EPAVF--C with different $\varepsilon$.}
	\label{fig:ene-kgz1d}
\end{figure}

\begin{ex}
	In this example, we simulate the two-dimensional KGZ equations by EPAVF-C with initial conditions
	\begin{equation*}
		\begin{split}
			&E_{0}\left(x, y\right) = \exp{\left(-(x+2)^{2} - y^{2}\right)} + \exp{\left(-(x-2)^{2} - y^{2}\right)}, \ E_{1}\left(x, y\right) = \exp{\left(-x^{2} - y^{2}\right)}, \\ 
			&M_{0}\left(x, y\right) = \mbox{\rm sech}{\left(x^{2} + \left(y+2\right)^{2}\right)} + \mbox{\rm sech}{\left(x^{2} + \left(y-2\right)^{2}\right)}, \ M_{1}\left(x, y\right) = \mbox{\rm sech}{\left(x^{2} + y^{2}\right)}.
		\end{split}
	\end{equation*}
	and zeros boundary conditions on a rectangular domain $\Omega = \left[-32, 32\right]\times\left[-32, 32\right]$. We set $h_x = h_y = 1/4$ and $\tau = 0.1$. 
\end{ex}

Figure~\ref{Kgz 2D solution eh}-\ref{Kgz 2D solution mh} show snapshots of $E$ and $M$ by EPAVF-C with different $\varepsilon$ at $t = 0, 1, 2, 4$, respectively.  We can observe that the dynamic behavior of the 2D KGZ equations depends greatly on the parameter $\varepsilon$. Highly oscillatory waves appear in $E$ when $\varepsilon$ becomes smaller, and the EPAVF-C scheme can efficiently simulate these waves. Also, the related energies are preserved to round-off errors as expected.

\begin{figure}[H]
	\centering
	\begin{minipage}{0.23\textwidth}
		\includegraphics[width=1.0\linewidth]{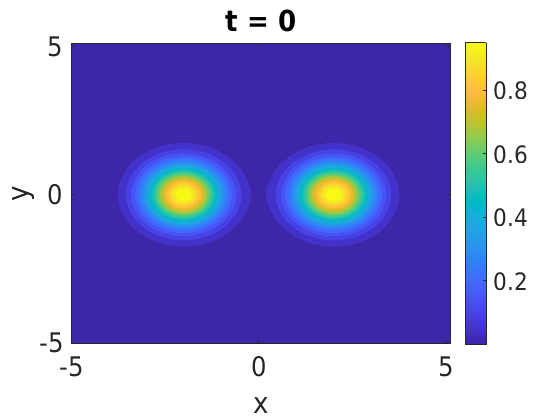}
	\end{minipage}
	\begin{minipage}{0.23\textwidth}
		\includegraphics[width=1.0\linewidth]{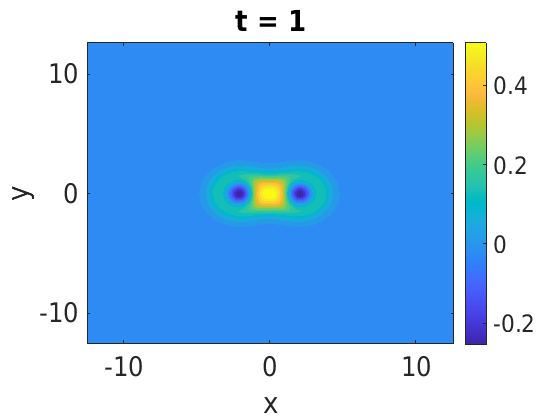}
	\end{minipage}
	\begin{minipage}{0.23\textwidth}
		\includegraphics[width=1.0\linewidth]{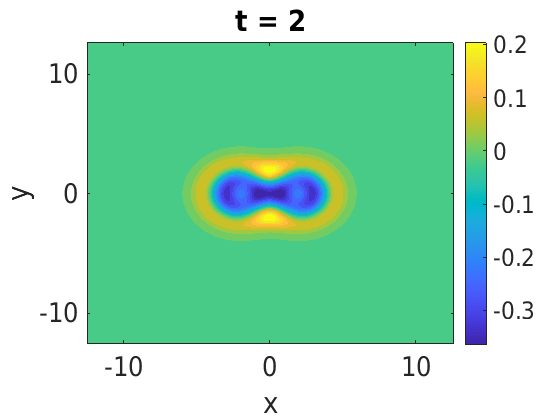}
	\end{minipage}
	\begin{minipage}{0.23\textwidth}
		\includegraphics[width=1.0\linewidth]{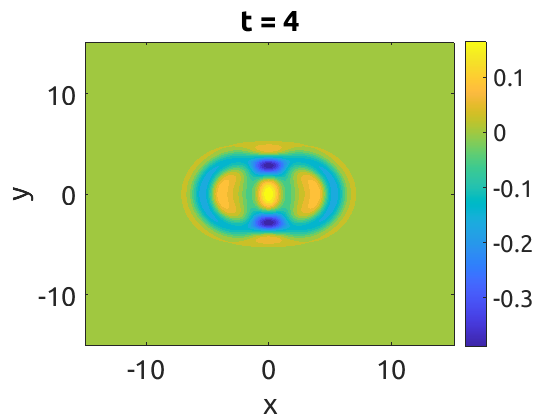}
	\end{minipage}
	\begin{minipage}{0.23\textwidth}
		\includegraphics[width=1.0\linewidth]{kgz2d_eh_t0.png}
	\end{minipage}
	\begin{minipage}{0.23\textwidth}
		\includegraphics[width=1.0\linewidth]{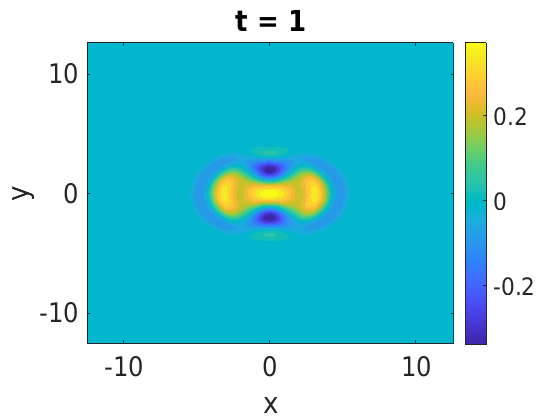}
	\end{minipage}
	\begin{minipage}{0.23\textwidth}
		\includegraphics[width=1.0\linewidth]{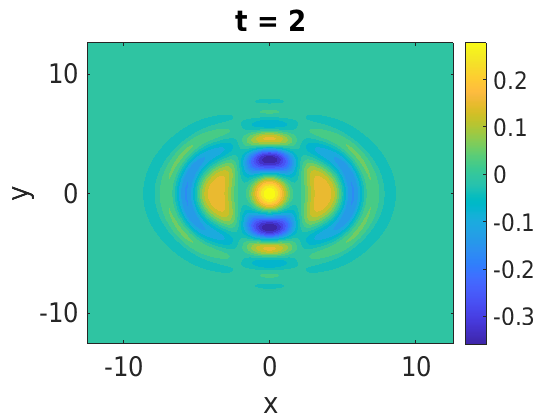}
	\end{minipage}
	\begin{minipage}{0.23\textwidth}
		\includegraphics[width=1.0\linewidth]{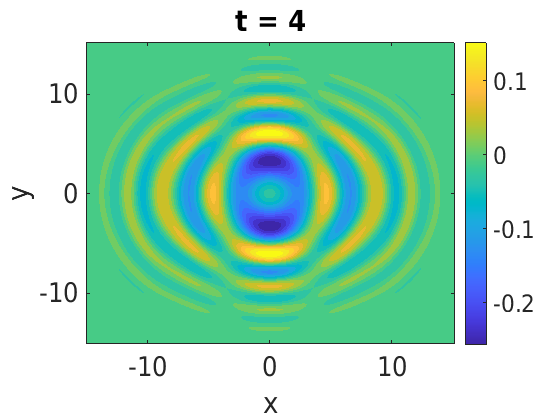}
	\end{minipage}
	\begin{minipage}{0.23\textwidth}
		\includegraphics[width=1.0\linewidth]{kgz2d_eh_t0.png}
	\end{minipage}
	\begin{minipage}{0.23\textwidth}
		\includegraphics[width=1.0\linewidth]{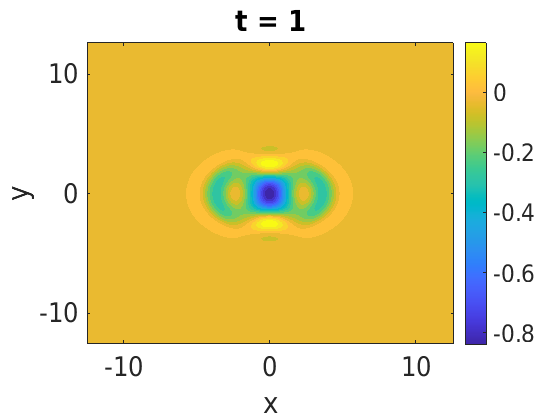}
	\end{minipage}
	\begin{minipage}{0.23\textwidth}
		\includegraphics[width=1.0\linewidth]{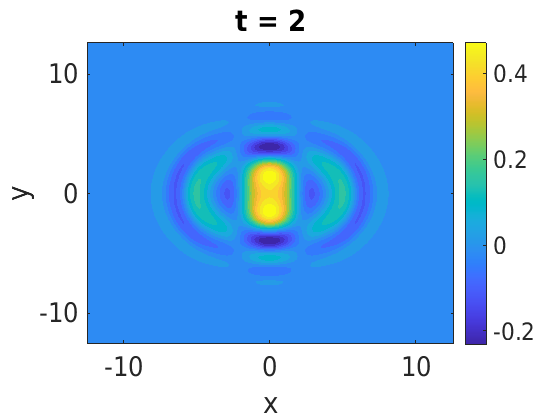}
	\end{minipage}
	\begin{minipage}{0.23\textwidth}
		\includegraphics[width=1.0\linewidth]{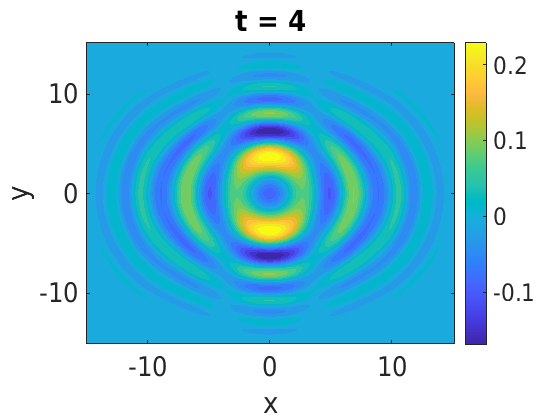}
	\end{minipage}
	\caption{Snapshots of $E$ for the 2D KGZ equations solved by EPAVF-C with $\varepsilon = 1$ (first row); $\varepsilon = 0.1$ (middle row);  $\varepsilon = 0.01$ (last row).}
	\label{Kgz 2D solution eh}
\end{figure}
\begin{figure}[H]
	\centering
	\begin{minipage}{0.23\textwidth}
		\includegraphics[width=1.0\linewidth]{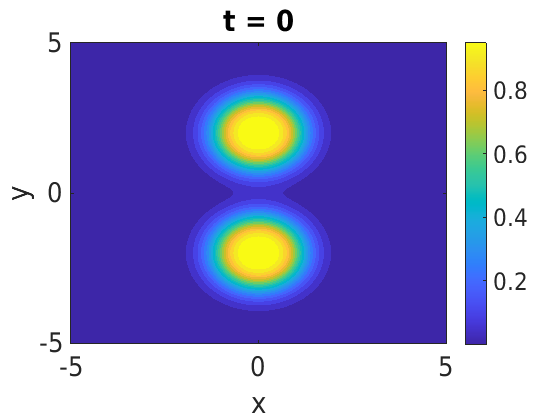}
	\end{minipage}
	\begin{minipage}{0.23\textwidth}
		\includegraphics[width=1.0\linewidth]{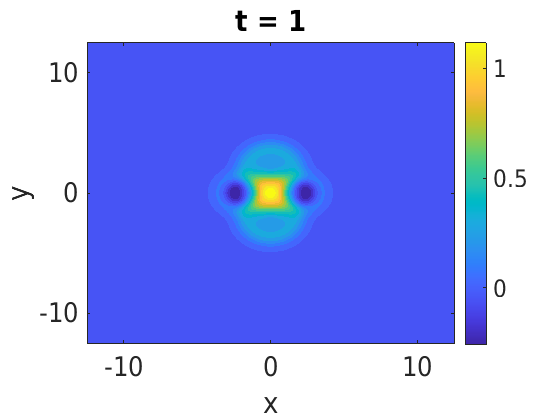}
	\end{minipage}
	\begin{minipage}{0.23\textwidth}
		\includegraphics[width=1.0\linewidth]{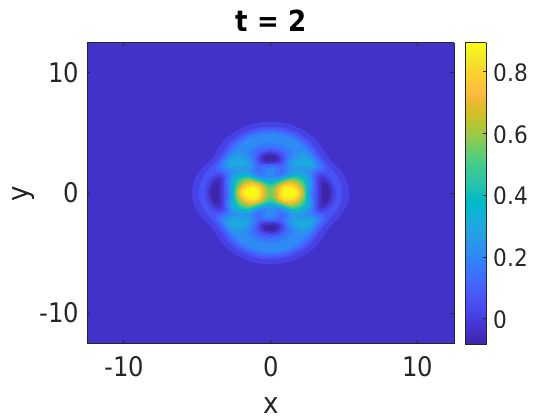}
	\end{minipage}
	\begin{minipage}{0.23\textwidth}
		\includegraphics[width=1.0\linewidth]{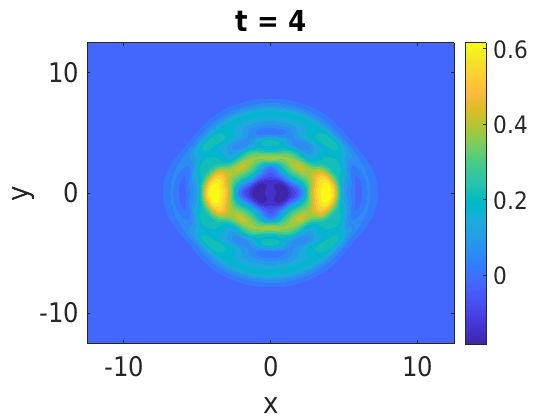}
	\end{minipage}
	\begin{minipage}{0.23\textwidth}
		\includegraphics[width=1.0\linewidth]{kgz2d_mh_t0.png}
	\end{minipage}
	\begin{minipage}{0.23\textwidth}
		\includegraphics[width=1.0\linewidth]{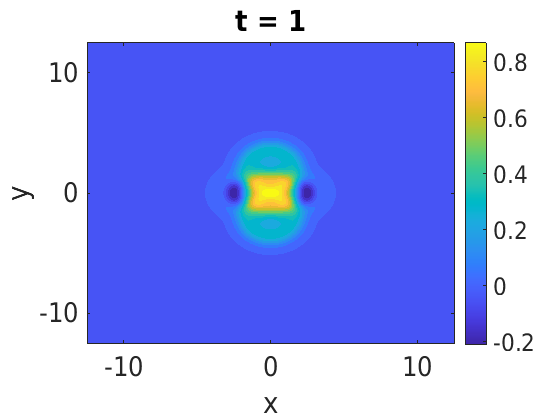}
	\end{minipage}
	\begin{minipage}{0.23\textwidth}
		\includegraphics[width=1.0\linewidth]{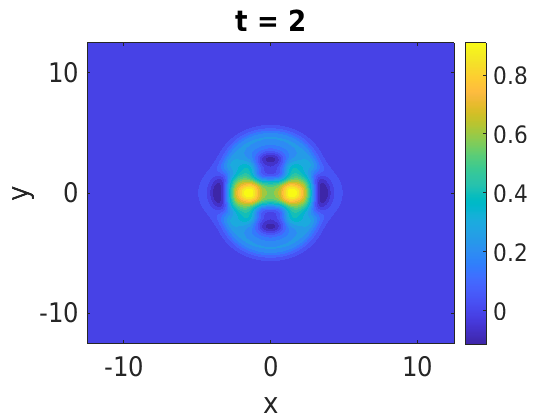}
	\end{minipage}
	\begin{minipage}{0.23\textwidth}
		\includegraphics[width=1.0\linewidth]{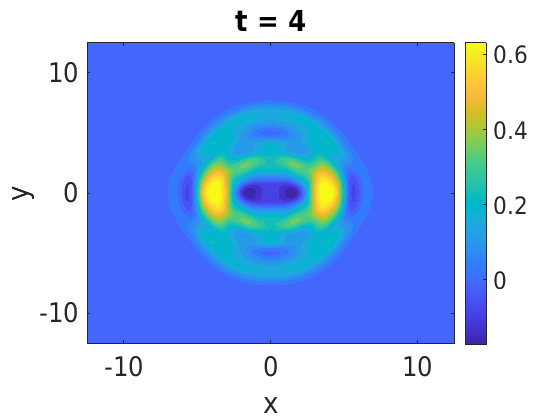}
	\end{minipage}
	\begin{minipage}{0.23\textwidth}
		\includegraphics[width=1.0\linewidth]{kgz2d_mh_t0.png}
	\end{minipage}
	\begin{minipage}{0.23\textwidth}
		\includegraphics[width=1.0\linewidth]{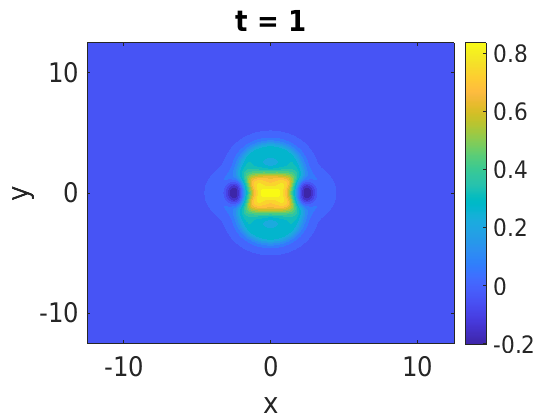}
	\end{minipage}
	\begin{minipage}{0.23\textwidth}
		\includegraphics[width=1.0\linewidth]{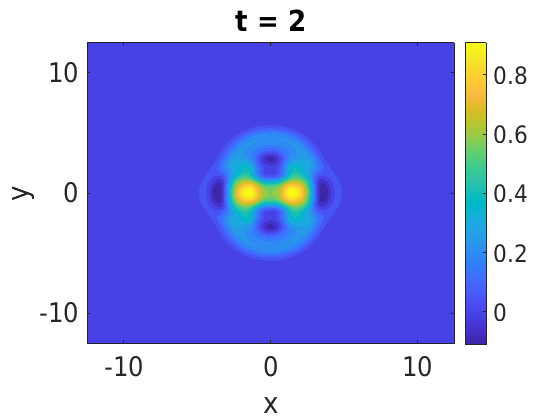}
	\end{minipage}
	\begin{minipage}{0.23\textwidth}
		\includegraphics[width=1.0\linewidth]{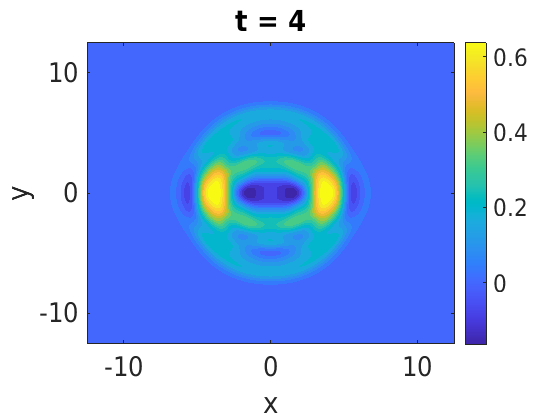}
	\end{minipage}
	\caption{Snapshots of $M$ for the 2D KGZ equations solved by EPAVF-C with $\varepsilon = 1$ (first row); $\varepsilon = 0.1$ (middle row);  $\varepsilon = 0.01$ (last row).}
	\label{Kgz 2D solution mh}
\end{figure}
\begin{figure}[H]
	\centering
	\includegraphics[width=0.9\textwidth]{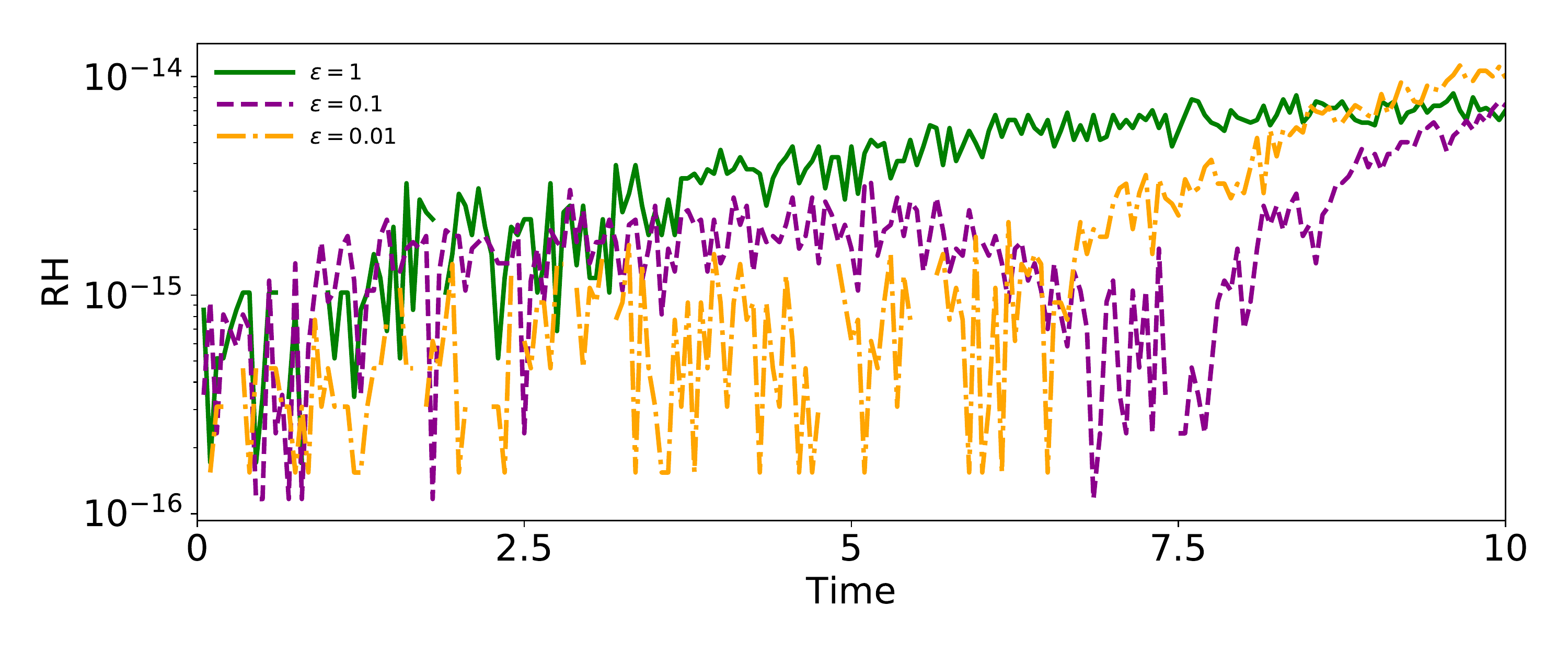}
	\caption{Energy error of the 2D KGZ equations solved by EPAVF-C with different $\varepsilon$.}
	\label{Kgz 2D energy}
\end{figure}

\section{Conclusion}
Based on the exponential integrator and the partitioned averaged vector field method, we develop a new class of energy-preserving exponential integrators for multi-component Hamiltonian systems, which is highly efficient that every subsystem can be solved one by one. For specific problems like the KGS and KGZ equations considered in this paper, such methods may exhibit more superior behavior. That is, one part of the resulting schemes is explicitly solved while the other is linearly implicit. Compared with most of existing linearly implicit schemes such as IEQ, SAV, approaches and other multistep methods, the proposed scheme can preserve the original energy. A rigorous proof of energy conservation is given for general multi-component Hamiltonian systems without any additional assumptions, which can be viewed as an improvement and generalization of the existing ones. Ample numerical experiments are carried out to demonstrate the significant advantages of the proposed methods in accuracy, computational efficiency, and the ability in capturing highly oscillatory solutions.

	Notice that in the simulation of the limit regime, i.e., $0<\varepsilon\ll 1$ in the KGS and KGZ equations, the $\varepsilon$-scalability or meshing strategy for the EPAVF method is $\tau=\mathcal{O}(\varepsilon^2)$, which can be further improved by utilizing the multiscale time integrators proposed in \cite{bao2016,bao2017}. Nevertheless, how to maintain energy conservation in the meantime needs to be studied.

\section{Acknowledgements}
This work is supported by the National Key Research and Development Project of China (2018YFC1504205),  the National Natural Science Foundation of China (12171245, 11971242), the Yunnan Fundamental Research Projects (Nos. 202101AT070208, 202101AS070044), and the Science and Technology Innovation Team on Applied Mathematics in Universities of Yunnan.

\appendix

\section{Derivation of the EPAVF scheme for 2D KGS equations}\label{appendix}

Let the computation domain $\Omega=[a,b]\times[c,d]$ and $N_x$, $N_y$ be given even integers. The spatial steps are then defined as $h_x=(b-a)/N_x$, $h_y=(d-c)/N_y$, and the mesh grid is denoted by $\Omega_h=\{(x_i,y_j)|x_i=a+ih_x, y_j=c+jh_y,i=0,\cdots,N_x, j=0,\cdots,N_y\}$. Let $V_h=\{V|V=(V_{ij}),(x_i,y_j)\in\Omega_h\}$ be the space of grid function on $\Omega_h$. Applying the Fourier pseudospectral method to the 2D KGS equations and after some arrangements, we have
\begin{equation}\label{kgs-semi2d}
	\begin{cases}
		{Q}_t + \beta (\mathbb{D}_2^x {P}+{P}\mathbb{D}_2^y) +  {P} \odot{U} = 0,  \\
		{P}_t - \beta (\mathbb{D}_2^x  {Q}+{Q}\mathbb{D}_2^y)-  {Q}  \odot{U} = 0,  \\
		{U}_t =  {V},\\
		\varepsilon^{2}{V}_t - (\mathbb{D}_2^x {U} +{U}\mathbb{D}_2^y ) + \dfrac{1}{\varepsilon^{2}}  {U} -  {Q}^{2} -  {P}^{2} = 0,
	\end{cases}
\end{equation}
where $\mathbb{D}_2^\alpha, \alpha=x,y$ are the second-order spectral differential matrices in $x,y$ directions, respectively. For convenience of the derivation, we reshape $U$ by columns into a vector \textbf{u} of dimension $N_x\times N_y$, \textit{etc.} Then the above system \eqref{kgs-semi2d} can be reformed as
\begin{equation}\label{kgs-semi-vec}
	\begin{cases}
		\textbf{q}_t + \beta \mathbb{D} \textbf{p} +  \textbf{p} \odot\textbf{u} = 0,  \\
		\textbf{p}_t - \beta \mathbb{D}  \textbf{q}-  \textbf{q}  \odot\textbf{u} = 0,  \\
		\textbf{u}_t =  \textbf{V},\\
		\varepsilon^{2}{v}_t - \mathbb{D} \textbf{U} + \dfrac{1}{\varepsilon^{2}}  \textbf{u} -  \textbf{q}^{2} -  \textbf{p}^{2} = 0,
	\end{cases}
\end{equation}
where $\mathbb{D} = I_{N_y}\otimes \mathbb{D}_2^x + \mathbb{D}_2^y \otimes I_{N_x}$. The only differences of the semi-discretization \eqref{KGS-semi} and \eqref{kgs-semi-vec} appear in the dimension of variables and the differential matrix $\mathbb{D}$. Therefore, the corresponding EPAVF scheme can be similarly obtained as \eqref{scheme-kgs}, where the matrix exponentials are calculated by
\[
\begin{aligned}
		\exp(V_1) = 
	\left(\begin{array}{cc}
		\cos(\tau \beta \mathbb{D}) & -\sin(\tau \beta \mathbb{D})\\ 
		\sin(\tau \beta \mathbb{D}) & \cos(\tau \beta\mathbb{D})
	\end{array}\right), \quad
	\exp(V_2)= 
	\left(\begin{array}{cc}
		\cos(\tau \widetilde{\mathbb{D}})& \frac{\sin( \tau\widetilde{\mathbb{D}}) }{ \widetilde{\mathbb{D}}}	\\ 
		-\widetilde{\mathbb{D}}\sin( \tau\widetilde{\mathbb{D}}) & \cos(\tau \widetilde{\mathbb{D}})
	\end{array} \right), \\ 
	\end{aligned}
\]
and
\[
\begin{aligned}
		\varphi(V_1) = 
	\left(\begin{array}{cc}
		\frac{\sin(\tau \beta \mathbb{D})}{\tau\beta  \mathbb{D}} & \frac{I_{N_x\times N_y}  - \cos(\tau \beta  \mathbb{D})}{\tau\beta  \mathbb{D}} \\ 
		-\frac{I_{N_x\times N_y} - \cos(\tau \beta  \mathbb{D})}{\tau\beta  \mathbb{D}} & \frac{\sin(\tau \beta  \mathbb{D})}{\tau\beta  \mathbb{D}}
	\end{array}\right), \ 
	\varphi(V_2) = 
	\left(\begin{array}{cc}
		\frac{\sin( \tau \widetilde{\mathbb{D}} ) } {\tau\widetilde{\mathbb{D}}} & \frac{   I_{N_x\times N_y}  - \cos(\tau \widetilde{\mathbb{D}}) }{\tau\widetilde{\mathbb{D}}^{2}}  \\
		\frac{\cos( \tau \widetilde{\mathbb{D}} ) - I_{N_x\times N_y} }{\tau} & \frac{\sin( \tau \widetilde{\mathbb{D}} )  }{\tau\widetilde{\mathbb{D}}}
	\end{array}\right).
\end{aligned}
\]
Notice that the differential matrix $\mathbb{D}$ can be decomposed as
\[
	\mathbb{D}= (F_{N_y}^{-1} \otimes F_{N_x}^{-1})(I_{N_y}\otimes \Lambda_2^x+\Lambda_2^y \otimes I_{N_x} )(F_{N_y} \otimes F_{N_x}),
\]
so that the calculation of trigonometric functions of matrix $\mathbb{D}$ can also be accelerated by FFT. For example, we have
\[
\cos{(\tau \beta \mathbb{D})} = (F_{N_y}^{-1} \otimes F_{N_x}^{-1})\cos( \tau\beta ( I_{N_y}\otimes \Lambda_2^x + \Lambda_2^y \otimes I_{N_x}) ) (F_{N_y} \otimes F_{N_x}).
\]
Moreover, due to the fact that $(B^\top\otimes A){\rm vec}(X)={\rm vec}(AXB)$ for any suitable matrices $A, B, X$,  where the function `${\rm vec}(\cdot)$' represents the vectorization of a matrix, we have
\[
\cos{(\tau \beta \mathbb{D})} \mathbf{u}
={\rm vec}\Big( F_{N_x}^{-1}( \cos{(\tau\beta\Lambda)}\odot\widetilde{{U}} ) F_{N_y}^{-\top} \Big),
 \]
where $\Lambda_{ij} = (\Lambda_2^x)_{ii} + (\Lambda_2^y)_{jj}$, $\widetilde{{U}} = F_{N_x} {U} F_{N_y}^\top$ and $(\cos{(\tau\beta \Lambda)})_{ij} = \cos{(\tau\beta \Lambda_{ij})}$. As a consequence, in practical computation we implement the EPAVF scheme \eqref{scheme-kgs} for 2D KGS equations just in the following matrix form
\begin{equation}\label{scheme-kgsfft}
	\begin{cases}
		\widetilde{Q}^{n+1} = \exp_{11}^1\odot \widetilde{Q}^{n} + \exp_{12}^1\odot \widetilde{{P}}^{n} +  \frac{\tau}{2} \varphi_{12}^1 \odot\widetilde{G}_1 - \frac{\tau}{2} \varphi_{11}^1 \odot\widetilde{G}_2,  \\ 
		\widetilde{P}^{n+1} =  \exp_{21}^1\odot \widetilde{Q}^{n} + \exp_{22}^1\odot \widetilde{P}^{n} +  \frac{\tau}{2} \varphi^1_{22} \odot\widetilde{G}_1 - \frac{\tau}{2} \varphi^1_{21} \odot\widetilde{G}_2, \\
		\widetilde{U}^{n+1} =\exp_{11}^2\odot \widetilde{U}^{n} + \exp_{12}^2\odot \widetilde{V}^{n} + \frac{\tau}{\varepsilon^{2}}\varphi_{12}^2 \odot\widetilde{G}_3, \\ 
		\widetilde{V}^{n+1} = \exp_{21}^2\odot \widetilde{U}^{n} + \exp_{22}^2\odot \widetilde{V}^{n} + \frac{\tau}{\varepsilon^{2}}\varphi_{22}^2 \odot\widetilde{G}_3,
	\end{cases}
\end{equation}
 where ${G}_1=U^{n} \odot Q^{n+1/2}, {G}_2=U^{n} \odot P^{n+1/2}, {G}_3=\left(Q^{n+1}\right)^2 + \left(P^{n+1}\right)^2$. The components of matrix exponentials, i.e., $\exp_{ij}^k$, $\varphi_{ij}^k$ are obtained by replacing $\mathbb{D}$ with $\Lambda$, and the corresponding trigonometric functions are computed element-by-element.
 
 Although we have only derived the practical EPAVF scheme \eqref{scheme-kgsfft} for the 2D KGS equations here, its adjoint scheme as well the EPAVF schemes for 2D KGZ equations can be obtained similarly. Furthermore, the procedures of derivation can be directly generalized to 3D cases with few changes.

\bibliographystyle{abbrv}
\bibliography{reference.bib}

\end{document}